\newcommand{\IN}{{\mathbb N}}
\renewcommand{\H}{\mathbb{H}}
\newcommand{\SL}{\mathrm{SL}}
\newcommand{\N}{\mathbb N}
\newcommand{\C}{\mathbb C}
\newcommand{\Q}{\mathbb Q}
\renewcommand{\aa}{a}
\theoremstyle{plain}
\newtheorem{thm}{Theorem}[section]
\newtheorem{cor}[thm]{Corollary}
\newtheorem{lem}[thm]{Lemma}
\newtheorem{prop}[thm]{Proposition}
\theoremstyle{definition}
\numberwithin{equation}{section}
\def\d{\delta}
\def\p{\rho}
\def\n{\nu}
\def\d{\delta}
\def\p{\varrho}
\def\n{\nu}
\newcommand{\re}{{\rm Re}}
\newcommand{\im}{{\rm Im}}
\newcommand{\R}{\mathbb R}
\newcommand{\Z}{\mathbb Z}
\setlist[itemize]{noitemsep, topsep=0pt}
\newcommand{\vast}{\bBigg@{2}}
\newcommand{\Vast}{\bBigg@{5}}
\renewcommand{\pmod}[1]{\ \left( \mathrm{mod} \, #1 \right)}
\newcommand{\Pmod}[1]{\ ( \mathrm{mod} \, #1 )}
\newcommand{\ord}{\operatorname{ord}}
\DeclareMathOperator{\lcm}{lcm}
\title{Vanishing properties of Fourier coefficients of holomorphic $\eta$-quotients}
\author{Kathrin Bringmann}
\address{University of Cologne, Department of Mathematics and Computer Science, Weyertal 86-90, 50931 Cologne, Germany}
\email{kbringma@math.uni-koeln.de}
\author{Guoniu Han}
\address{I.R.M.A., UMR 7501, Universit\'e de Strasbourg et CNRS, 7 rue
Ren\'e Descartes, F-67084 Strasbourg, France}
\email{guoniu.han@unistra.fr}
\author{Bernhard Heim}
\address{University of Cologne, Department of Mathematics and Computer Science, Weyertal 86-90, 50931 Cologne, Germany}
\email{bheim@uni-koeln.de}
\author{Ben Kane}
\address{The University of Hong Kong, Department of Mathematics, Pokfulam, Hong Kong}
\email{bkane@hku.hk}
\begin{document}
\subjclass[2020]{11F03,11F06,11F11,11F12,11F20,11F30,11F37}
\keywords{eta-quotients, modular forms, vanishing of Fourier coefficients of modular forms}
\date{\today}
	\begin{abstract}
		In this paper, we study vanishing of Fourier coefficients of holomorphic $\eta$-quotients. We investigate examples of two different types: the first one involves integral weight CM newforms, while the second one involves half-integral weight $\eta$-quotients associated with sums of squares and Hurwitz class numbers.
	\end{abstract}
	\maketitle
	\section{Introduction and statement of results}
Let $m\in\N$ and $\delta_j\in \Z$ for $1\leq j\leq m$. We define
	\[
		\prod_{j=1}^{m} \left(q^j;q^j\right)_{\infty}^{\delta_j}=:\sum_{n\ge0}C_{1^{\delta_1} 2^{\delta_2}\cdots m^{\delta_m}}(n)q^n,
	\]
where $(a;q)_n:=\prod_{m=0}^{n-1}(1-aq^m)$ for $n\in\N_0\cup \{\infty\}$ is the \begin{it}$q$-Pochhammer symbol\end{it}. In this paper, we investigate when $C_{1^{\delta_1} 2^{\delta_2}\cdots m^{\delta_m}}(n)$ vanishes. Specifically, define the \begin{it}vanishing set\end{it}
\[
\mathcal{S}_{1^{\delta_1} 2^{\delta_2}\cdots m^{\delta_m}}:=\left\{n\in\N: C_{1^{\delta_1} 2^{\delta_2}\cdots m^{\delta_m}}(n)=0\right\}.
\]
A famous conjecture of Lehmer \cite{Lehmer} states that $\mathcal{S}_{1^{24}}=\emptyset$, while for the partition generating function one sees that $\mathcal{S}_{1^{-1}}=\emptyset$.
Since (see \cite[Theorem 1.60]{O})
\[
	\sum_{n\in\Z}q^{n^2}=\frac{\left(q^2;q^2\right)_{\infty}^{5}}{\left(q;q\right)_{\infty}^2\left(q^4;q^4\right)_{\infty}^{2}},
\]
questions related to whether integers are represented as sums of squares may be interpreted as determinations of $\mathcal{S}_{1^{\delta_1} 2^{\delta_2}\cdots m^{\delta_m}}$. For example, in this language, Lagrange's four-squares theorem, i.e., that every $n\in\N$ may be written in the form $\sum_{j=1}^4 n_j^2=n$ with $n_j\in\Z$ is equivalent to $S_{1^{-8}2^{20}4^{-8}}=\emptyset$.
In another direction, Granville and Ono \cite[Theorem 1]{GranvilleOno} proved that for $t\geq 4$ and $n\in\N$, there always exist a so-called $t$-core partition of $n$, which is equivalent to showing that $S_{1^{-1}t^t}(n)=\emptyset$. To give another interesting example related to $3$-core partitions, in \cite[Theorem 1.1]{HO}, Ono and the second author proved Conjecture 4.6 of \cite{Ha}, which states that\footnote{Here and throughout $p$ denotes a prime.}
\[
\mathcal{S}_{1^8}=\mathcal{S}_{1^{-1}3^{3}}=\left\{n\in\N: \exists p\equiv 2\pmod{3},\ \ord_p(3n+1)\text{ {\rm is odd}}\right\}.
\]
The second author later conjectured that $\mathcal{S}_{1^8} = \mathcal{S}_{1^2 3^2}$ as well, and this was proven by Clemm \cite[Theorem 1 and Remark 2]{Cl14}. This set of three examples is not isolated, as we demonstrate now.
\begin{thm}\label{thm:L52}
We have
\[
\mathcal{S}_{1^{-1}3^34^2}=\mathcal{S}_{1^{4}2^{-2}4^4}=\left\{n\in\N: \exists p\equiv 3\pmod{4},\ \ord_p(3n+2)\text{ {\rm is odd}}\right\}.
\]
\end{thm}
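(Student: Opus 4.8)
The plan is to realize both products as holomorphic modular forms with complex multiplication by $K=\Q(i)$ and to read off the vanishing from the splitting of primes in $K$. Throughout, recall that by the classical two-squares theorem the right-hand set is exactly $\{n\in\N:3n+2\text{ is not a sum of two squares}\}$, since a positive integer fails to be a sum of two squares precisely when some prime $p\equiv 3\pmod 4$ divides it to an odd order. Writing the two products as $\eta$-quotients, one checks that both carry the common prefactor $q^{2/3}$; indeed $\eta(\tau)^{-1}\eta(3\tau)^3\eta(4\tau)^2=q^{2/3}\sum_n C_{1^{-1}3^34^2}(n)q^n$ and $\eta(\tau)^4\eta(2\tau)^{-2}\eta(4\tau)^4=q^{2/3}\sum_n C_{1^{4}2^{-2}4^4}(n)q^n$. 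Substituting $\tau\mapsto 3\tau$ clears the fractional exponent, so that $F_1(\tau):=\eta(9\tau)^3\eta(12\tau)^2\eta(3\tau)^{-1}$ and $F_2(\tau):=\eta(3\tau)^4\eta(6\tau)^{-2}\eta(12\tau)^4$ become holomorphic modular forms of weights $2$ and $3$ on $\Gamma_0(N_i)$ with $N_i\mid 144$ and with nebentypus given by the Ligozat formula. Their Fourier expansions are $\sum_n C_\ast(n)q^{3n+2}$, hence supported on exponents $\equiv 2\pmod 3$, and the size of the coefficients shows they are cusp forms. The theorem reduces to showing, for each $i$, that the coefficient of $q^{3n+2}$ in $F_i$ vanishes exactly when $3n+2$ is not a sum of two squares.

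Next I would isolate the arithmetic core as a lemma about Hecke theta series. Let $\psi$ be a Gr\"ossencharacter of $K=\Q(i)$ of infinity type $k-1$ whose conductor $\mathfrak f$ is coprime to $(1+i)$ (so its modular conductor is $4N\mathfrak f$ with $N\mathfrak f$ odd), and let $\theta_\psi=\sum_{\mathfrak a}\psi(\mathfrak a)q^{N\mathfrak a}=\sum_m a(m)q^m$, the sum running over integral ideals coprime to $\mathfrak f$. I claim that for every $m$ coprime to $N\mathfrak f$ one has $a(m)\neq 0$ iff $m$ is a sum of two squares. By multiplicativity it suffices to treat prime powers. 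An inert prime $p\equiv 3\pmod 4$ admits no ideal of odd norm-exponent, forcing $a(p^{\mathrm{odd}})=0$, while $a(p^{2c})=\psi((p))^c\neq 0$; the ramified prime gives $a(2^a)=\psi((1+i))^a\neq 0$ since $(1+i)\nmid\mathfrak f$. The only delicate case is a split prime $p=\mathfrak p\bar{\mathfrak p}\equiv 1\pmod 4$, where
\[
a(p^b)=\sum_{j=0}^{b}\psi(\mathfrak p)^{j}\psi(\bar{\mathfrak p})^{b-j}=\frac{\psi(\mathfrak p)^{b+1}-\psi(\bar{\mathfrak p})^{b+1}}{\psi(\mathfrak p)-\psi(\bar{\mathfrak p})}.
\]
This vanishes only if $\big(\psi(\mathfrak p)/\psi(\bar{\mathfrak p})\big)^{b+1}=1$, i.e.\ only if $\psi(\mathfrak p)/\psi(\bar{\mathfrak p})$ is a root of unity. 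Writing $\psi((\pi))=\pi^{k-1}\chi(\pi)$ with $N\pi=p$ and $\chi$ of finite order, this ratio equals $(\pi/\bar\pi)^{k-1}$ times a root of unity, so it would force $\pi/\bar\pi$ itself to be a root of unity; but $\pi/\bar\pi\in K$, and the only roots of unity in $K$ are $\pm1,\pm i$, each of which forces $p=2$ or $\pi\in\R\cup i\R$, contradicting that $p$ is an odd split prime. Hence $a(p^b)\neq 0$, proving the lemma.

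Finally I would match each $F_i$ to this structure. The key identification is that $F_i$ is a nonzero constant multiple of the projection onto exponents $\equiv 2\pmod 3$ of a \emph{single} weight-$k$ CM newform $g_i$ attached to such a $\psi$ on $K=\Q(i)$ with $(1+i)\nmid\mathfrak f$; equivalently $C_\ast(n)=c_i\,a_{g_i}(3n+2)$. Having pinned down the weight ($2$ or $3$), level, and nebentypus from Ligozat, this is an identity between two forms in a fixed finite-dimensional space and is verified by comparing Fourier coefficients up to the Sturm bound; matching against a single newform is what rules out accidental cancellation between distinct CM forms. Because $m=3n+2$ satisfies $3\nmid m$ and is coprime to the odd part of the conductor, while the ramified prime does not obstruct, the lemma applies verbatim to $m=3n+2$ and yields $C_\ast(n)=0$ iff $3n+2$ is not a sum of two squares. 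As this holds for both $i=1,2$, the two vanishing sets coincide with $\{n\in\N:\exists\,p\equiv 3\pmod 4,\ \ord_p(3n+2)\text{ odd}\}$, completing the proof.

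The main obstacle is the identification in the last step: recognizing the $\eta$-quotients as projections of CM newforms and, above all, correctly determining the weight, level, and nebentypus so that the finite coefficient comparison is conclusive and isolates one eigenform. A secondary technical point is the non-cancellation at split primes, which the root-of-unity argument settles, together with verifying holomorphy of the $\eta$-quotients via Ligozat and controlling the space in which the residue-class projection lives.
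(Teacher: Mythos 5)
Your overall strategy is sound and, in its key arithmetic step, genuinely different from the paper's. The paper proves non-vanishing at split primes $p\equiv 1\pmod 4$ by elementary means: it writes the newform coefficients as explicit lattice sums over $n_1^2+n_2^2=p$ (Lemmas \ref{lem:g1g2}, \ref{lem:g521}, \ref{lem:f522}), bounds $|c(p)|<p$ to get $c(p)\not\equiv 0\pmod p$, and then runs an induction on $r$ through the Hecke recursion of \Cref{lem:Hecke} modulo $p$ (see the proof of \Cref{prop:f3612vanish}). You instead invoke the Gr\"ossencharacter description and show $a(p^b)\neq 0$ because $\psi(\mathfrak p)/\psi(\bar{\mathfrak p})=(\pi/\bar\pi)^{k-1}\cdot(\text{root of unity})$ cannot be a root of unity for an odd split prime, since the only roots of unity in $\Q(i)$ are $\pm 1,\pm i$ and each forces $\pi\in\R\cup i\R$ or $p=2$. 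That argument is correct for the weights $2$ and $3$ occurring here, and it is cleaner and more conceptual than the paper's induction; what the paper's approach buys in exchange is that everything stays at the level of explicit integral Fourier coefficients, with no need to set up Hecke characters or verify the CM structure abstractly. The inert-prime and ramified-prime cases, and the reduction of the right-hand set to ``$3n+2$ is not a sum of two squares,'' match the paper.

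There is, however, one concrete inaccuracy in your identification step, which you yourself flag as the crux. For $1^{4}2^{-2}4^{4}$ your claim is exactly right: the paper's \Cref{lem:142m244} gives $\eta^4(3z)\eta^4(12z)\eta^{-2}(6z)=-\frac12 g_4|S_{3,2}$ with a single newform $g_4\in S_3(\Gamma_0(36),\chi_{-4})$. But for $1^{-1}3^34^2$ the $\eta$-quotient is \emph{not} a constant multiple of the projection of a single CM newform onto exponents $\equiv 2\pmod 3$: by \Cref{lem:(1,-1),(3,3),(4,2)} it equals $-\frac{i}{\sqrt2}g_1|S_{12,2}+\frac{i}{\sqrt2}g_1|S_{12,8}-\frac{i}{3\sqrt2}g_3|S_{6,5}$, involving two distinct newforms ($g_1$ of level $36$ and $g_3$ of level $144$) and, moreover, \emph{different} constants on the progressions $2$ and $8\pmod{12}$ (indeed $c_1(2)=\sqrt2 i$ while $c_1(8)=-2\sqrt2 i$, yet the $\eta$-quotient has coefficients $1$ and $2$ at $q^2$ and $q^8$, so a single proportionality constant is impossible). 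Your proposed Sturm-bound verification of the single-newform identity would therefore fail. The fix is exactly the structure the paper uses: the decomposition is a sum of sieved pieces supported on \emph{disjoint} arithmetic progressions, so for each individual $m=3n+2$ the coefficient is still a nonzero multiple of a single CM newform coefficient at $m$, and your Gr\"ossencharacter lemma (applied to whichever of the two characters is relevant for that residue class, noting that the even-conductor character $g_3$ is only ever evaluated at odd $m$) then yields the stated vanishing set. With that correction the proof goes through.
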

Note that the vanishing set appearing in Theorem \ref{thm:L52} precisely consists of those $n$ for which $3n+2$ is not the norm of an element of $\Z[i]$. In the same way, the vanishing set in the next theorem is related to norms of elements in $\Z[\sqrt{-2}]$.
\begin{thm}\label{thm:L95}
We have
\begin{align*}
\mathcal{S}_{1^{1}2^{-2}4^3}&=\mathcal{S}_{1^{1}2^{2}4^1}=\mathcal{S}_{1^{3}2^{-1}4^2}=\mathcal{S}_{1^32^3}=\mathcal{S}_{1^72^{-3}4^2}\\
&=\left\{n\in\N: \exists p\equiv 5,7\pmod{8},\ \ord_p(8n+3)\text{ {\rm is odd}}\right\}.
\end{align*}
\end{thm}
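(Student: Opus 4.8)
The plan is to show that for each of the five eta-quotients the coefficient $C_{\cdots}(n)$ equals, up to a nonzero constant, the $(8n+3)$-th Fourier coefficient of a CM modular form attached to $K=\Q(\sqrt{-2})$, and then to read off the vanishing from the arithmetic of $K$. First I would record why $8n+3$ governs everything: each listed product $\prod_j(q^j;q^j)^{\delta_j}$ has $\frac1{24}\sum_j j\delta_j=\frac38$, so $\prod_j\eta(j\tau)^{\delta_j}=\sum_n C_{\cdots}(n)\,q^{(8n+3)/8}$. Replacing $q$ by $q^8$ turns each into a holomorphic modular form of weight $\frac12\sum_j\delta_j\in\{1,2,2,3,3\}$ on some $\Gamma_0(M)$ with nebentypus, whose Fourier support lies in the progression $m\equiv 3\pmod 8$, with $m=8n+3$. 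The right-hand set of the theorem is exactly $\{n:8n+3\text{ is not a norm from }K\}$: since $K$ has class number one with ring of integers $\Z[\sqrt{-2}]$ and norm form $x^2+2y^2$, genus theory (Gauss) says an odd $m$ is represented by $x^2+2y^2$ iff every inert prime $p\equiv 5,7\pmod 8$ divides $m$ to even order, i.e. iff no $p\equiv 5,7\pmod 8$ has $\ord_p(m)$ odd.

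For the weight-one quotient $\eta(\tau)\eta(2\tau)^{-2}\eta(4\tau)^3$ I would make this completely explicit, exactly as in the proof of Theorem \ref{thm:L52}. A short theta computation (comparing against the restriction of $\sum_{x,y}q^{x^2+2y^2}$ to the odd-odd sublattice, using $\sum_{x\text{ odd}}q^{x^2}=\theta_2(4\tau)$ together with the identity for $\sum_n q^{n^2}$ recalled in the excerpt) identifies it with a CM theta series, giving $|C_{1^12^{-2}4^3}(n)|=\tfrac14\,r_{x^2+2y^2}(8n+3)$, where $r_{x^2+2y^2}(m)=2\sum_{d\mid m}\left(\frac{-2}{d}\right)$. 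Here the divisor sum is multiplicative, its local factor at a prime $p$ is $\sum_{j=0}^{a}\left(\frac{-2}{p}\right)^j$, and this vanishes precisely when $\left(\frac{-2}{p}\right)=-1$ (i.e. $p\equiv 5,7\pmod 8$) and $a=\ord_p(8n+3)$ is odd. This settles the weight-one column by an entirely elementary argument, matching the genus-theoretic description above.

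The four remaining quotients have weights $2,2,3,3$, and here I would argue uniformly that each is (a nonzero multiple of) a Hecke eigenform with CM by $K$. Concretely, after the $q\mapsto q^8$ rescaling I would locate each in the appropriate space $S_\kappa(\Gamma_0(M),\chi)$, verify holomorphy and the cusp conditions via the Ligozat order formula, and identify it with an explicit CM form $\theta_\psi=\sum_{\mathfrak a}\psi(\mathfrak a)q^{N\mathfrak a}$ for a suitable Hecke character $\psi$ of $K$ of infinity type $\kappa-1$, either directly or by matching coefficients up to the Sturm bound. Weight-one forms of this level are automatically dihedral, and in weights $2,3$ the relevant eigenspaces are one-dimensional CM spaces, so each quotient is a single CM eigenform and $C_{\cdots}(n)=c\,a_{f_\kappa}(8n+3)$ with $c\neq 0$, where $a_{f_\kappa}(m)=\sum_{N\mathfrak a=m}\psi(\mathfrak a)$. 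Since the inert Hecke eigenvalues vanish, $a_{f_\kappa}(m)=0$ whenever some $p\equiv 5,7\pmod 8$ has $\ord_p(m)$ odd; thus every $n$ in the right-hand set is a zero of $C_{\cdots}$, giving one inclusion for all five quotients at once.

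The reverse inclusion, namely that there is no \emph{extra} vanishing ($a_{f_\kappa}(8n+3)\neq 0$ whenever $8n+3$ is a norm), is the crux and the step I expect to fight hardest. By multiplicativity of the eigenvalues it suffices to show $a_{f_\kappa}(p^k)\neq 0$ for every split $p\equiv 1,3\pmod 8$ and every $k\geq 0$ (the ramified prime $2$ never divides the odd number $8n+3$). Writing $\psi(\mathfrak p)=p^{(\kappa-1)/2}e^{i\theta}$ for a prime $\mathfrak p\mid p$, one finds $a_{f_\kappa}(p^k)=p^{k(\kappa-1)/2}\,U_k(\cos\theta)$ with $U_k$ the Chebyshev polynomial of the second kind, so vanishing forces $\sin((k+1)\theta)=0$, i.e. $\theta/\pi\in\Q$; by Niven's theorem this requires $\cos\theta\in\{0,\pm\tfrac12,\pm1\}$, a finite list which one checks is never attained for the split primes of $K$ (for $\kappa=2$ it reduces to $x^2/p\in\{0,\tfrac14,1\}$ with $0<x^2<p$, which is impossible). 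Hence $a_{f_\kappa}(8n+3)\neq 0$ exactly when $8n+3$ is a norm, completing all five equalities simultaneously. The two genuine subtleties to pin down are that each weight-$2$ and weight-$3$ quotient is a single eigenform rather than a sum of CM forms (where cancellation could occur), handled by the one-dimensionality and Sturm-bound identification above, and the bookkeeping of the finite part of $\psi$, which affects the constant $c$ but not the support.
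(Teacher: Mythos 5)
Your plan is correct in outline and, for the weight-one quotient $1^12^{-2}4^3$, coincides with the paper's argument: Lemma \ref{lem:(1,1),(2,-2),(4,3)} reduces that case to $r_{(1,2)}(8n+3)$ and Lemma \ref{lem:r12n} gives the divisor-sum criterion. For the other four quotients the paper likewise first identifies each with a nonzero multiple of $g\vert S_{8,3}$ for an explicit newform $g$ of weight $2$ or $3$ on a $2$-power level ($g_5,\dots,g_8$), proved via the valence formula, so up to that point the two routes agree. The genuine divergence is in the hard direction, non-vanishing at split prime powers: the paper derives explicit lattice-point formulas for each $c_j(n)$ as weighted counts of representations by $x^2+2y^2$ and $x^2+8y^2$ and then runs a mod-$p$ induction on the Hecke recursion $c(p^r)=c(p)c(p^{r-1})-\chi(p)p^{\kappa-1}c(p^{r-2})$, seeded by a size bound $0<|c(p)|<p^{\kappa-1}\cdot(\dots)$ read off from the explicit formula (the template is Proposition \ref{prop:f3612vanish}); you instead write $a(p^k)=(\alpha^{k+1}-\beta^{k+1})/(\alpha-\beta)$ with $\alpha=\psi(\mathfrak p)$, $\beta=\psi(\overline{\mathfrak p})$ and argue that vanishing would force $\alpha/\beta$ to be a root of unity, impossible for a split prime. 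Your version is more uniform and conceptual and avoids the case-by-case congruence bookkeeping; the paper's is elementary and self-contained. Two details you must patch when executing your route: (i) the formula $a(p^k)=p^{k(\kappa-1)/2}U_k(\cos\theta)$ presupposes $\alpha\beta=p^{\kappa-1}$, which fails for $g_6\in S_2(\Gamma_0(64),\chi_8)$ at split primes $p\equiv 3\pmod 8$, where the nebentypus is $-1$ and $\beta=-\overline{\alpha}$; state the criterion as ``$\alpha/\beta$ is a root of unity'' rather than via $U_k$. (ii) In weight $2$ the quantity $\cos\theta=a(p)/(2\sqrt{p})$ is irrational, so Niven's theorem as usually stated does not apply; you need the algebraic-integer refinement (if $\theta\in\pi\Q$ then $2\cos\theta$ is an algebraic integer, whence $a(p)^2/p^{\kappa-1}\in\{1,2,3\}$, which is ruled out by the shape of $a(p)$ in each coefficient field) --- note this is exactly the same size-plus-nondivisibility input that drives the paper's mod-$p$ induction. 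With those repairs your argument closes all five cases.
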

Like in Theorem \ref{thm:L52}, the vanishing set in the next example is related to norms in $\Z[i]$, but has an additional congruence condition on $n$.
\begin{thm}\label{thm:L65}
We have
\[
\mathcal{S}_{1^{-1}2^{10}3^{-1}4^{-4}}=\mathcal{S}_{1^72^{-2}3^{-1}}=\left\{n\in\N: n\equiv 2\pmod{3}\text{ and }\exists p\equiv 3\pmod{4}, \ord_p(n)\text{ {\rm is odd}}\right\}.
\]
\end{thm}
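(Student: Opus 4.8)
The plan is to realize both $\eta$-quotients as holomorphic weight-$2$ modular forms and to reduce the vanishing of their coefficients to the arithmetic of $\Z[i]$, exploiting that $3$ is \emph{inert} in $\Z[i]$. Write $F_1:=\frac{(q^2;q^2)_\infty^{10}}{(q;q)_\infty(q^3;q^3)_\infty(q^4;q^4)_\infty^4}$ and $F_2:=\frac{(q;q)_\infty^7}{(q^2;q^2)_\infty^2(q^3;q^3)_\infty}$. First I would apply Ligozat's criterion: both exponent vectors satisfy $\sum_\delta \delta\, r_\delta\equiv 0\pmod{24}$, and checking $\sum_\delta (N/\delta)\, r_\delta\equiv 0\pmod{24}$ together with the order of vanishing at every cusp shows that $F_1\in M_2(\Gamma_0(36),\chi)$ and $F_2\in M_2(\Gamma_0(72),\chi)$ for a common quadratic character $\chi$ of conductor dividing $12$ (both eta-quotients produce the Kronecker symbol attached to $3$). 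In particular both lie in the finite-dimensional space $M_2(\Gamma_0(72),\chi)$.

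Next I would decompose each $F_i$ into its Eisenstein and cuspidal parts inside $M_2(\Gamma_0(72),\chi)$ and identify the cuspidal part with a fixed weight-$2$ CM newform $g$ having complex multiplication by $\Q(i)$, attached to a Hecke character $\psi$ of $\Z[i]$ ramified at the inert prime $(3)$. For such a $g$ one has $a_g(n)=\sum_{\mathrm N(\mathfrak a)=n}\psi(\mathfrak a)$, whence $a_g(n)=0$ whenever $3\mid n$ (since $3$ inert forces $(3)\mid\mathfrak a$), while for $3\nmid n$ the sum runs over ideals of norm $n$, so $a_g(n)=0$ whenever $n$ is not a sum of two squares. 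This is exactly the $\Z[i]$-norm mechanism already governing Theorem~\ref{thm:L52}, and I would import that analysis here. The point of the level-$3$ structure is that the nebentypus couples $\chi_{-4}$ with the mod-$3$ character, so that the coefficient can distinguish $n\equiv 1$ from $n\equiv 2\pmod 3$, which a pure CM form for $\Q(i)$ cannot.

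It then remains to control the Eisenstein contribution $E_i$ and to combine. The decisive structural feature I expect is that $E_i$, built from divisor sums twisted by the mod-$3$ character, is arranged so that the full coefficient of $F_i$ is forced to be nonzero for $3\mid n$ and for $n\equiv 1\pmod 3$, whereas on $n\equiv 2\pmod 3$ the Eisenstein term drops out and the coefficient reduces to $a_g(n)$. Granting this, the coefficient of $F_i$ vanishes precisely when $n\equiv 2\pmod 3$ and $n$ is not a sum of two squares, i.e.\ when some $p\equiv 3\pmod 4$ divides $n$ to an odd order, which is the asserted description. Crucially, although $F_1=1+q+\cdots$ and $F_2=1-7q+\cdots$ are genuinely different $q$-series, both share the same cuspidal form $g$ and Eisenstein parts that vanish on the same residue class $2\pmod 3$, so their vanishing sets coincide.

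The main obstacle is the non-vanishing half of the argument. Because $g$ carries a nontrivial character, $a_g(n)\neq 0$ for $n$ a sum of two squares is \emph{not} automatic: at a split prime $p\equiv 1\pmod 4$ the local factor $\psi(\mathfrak p)+\psi(\overline{\mathfrak p})$ could in principle vanish, and similarly the Eisenstein and cuspidal contributions could cancel when $n\equiv 1\pmod 3$ is a sum of two squares. Ruling out these cancellations requires the explicit Hecke character together with the class-number-one structure of $\Z[i]$, and must be carried out uniformly for both $F_1$ and $F_2$; this, rather than the (routine) modularity check or the (structural) vanishing direction, is where the real work lies.
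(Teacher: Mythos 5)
Your structural outline matches the paper's actual proof: both $\eta$-quotients are placed in $M_2(\Gamma_0(36),\chi_{12})$ resp.\ $M_2(\Gamma_0(72),\chi_{12})$, decomposed as an explicit Eisenstein series plus a combination of the two conjugate newforms $g_1,g_2$ spanning $S_2(\Gamma_0(36),\chi_{12})$; the Eisenstein coefficient vanishes exactly on $n\equiv 2\pmod 3$ (because $\chi_{-3}(n)=-1$ there), the cuspidal combination is supported there on a multiple of $c_1(n)$, and $c_1(n)=0$ iff $3\mid n$ or some $p\equiv 3\pmod 4$ divides $n$ to odd order. One small correction: the two quotients do not share ``the same cuspidal form'' --- their cuspidal parts are different linear combinations of $g_1$ and $g_2$ --- but the sieving identities $g_1|S_{3,1}=\tfrac12(g_1+g_2)$, $g_1|S_{3,2}=\tfrac12(g_1-g_2)$ make both reduce to a nonzero multiple of $c_1(n)$ on $n\equiv 2\pmod 3$, so your conclusion there survives.

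The genuine gap is the non-vanishing half, which you correctly flag as ``where the real work lies'' but then leave unresolved, and the tool you gesture at (the explicit Hecke character and class number one of $\Z[i]$) is not what closes it. What is actually needed, and what the paper supplies, is a \emph{quantitative} comparison: Deligne's bound $|c_1(n)|\le d(n)\sqrt n$ against an explicit multiplicative lower bound for the Eisenstein coefficient, namely a product of local factors $F_p(\nu_p)$ coming from the twisted divisor sums. One shows the ratio $G_i(n)=\prod_p F_p(\nu_p)/((\nu_p+1)p^{\nu_p/2})$ exceeds the relevant threshold for all $n$ outside an explicit finite set (up to $1120$ for the first quotient, up to $309400$ for the second), and the finitely many exceptions are checked by computer. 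No amount of CM structure alone rules out an accidental cancellation between the Eisenstein and cuspidal contributions at a single $n\equiv 1\pmod 3$; without the archimedean estimate plus finite verification the argument does not close. Two further points: for $3\mid n$ your worry about Eisenstein--cusp cancellation is moot, since $c_1(n)=c_2(n)=0$ there by the sieving identity and one only needs an elementary lower bound on the Eisenstein coefficient; and the split-prime non-vanishing $c_1(p^r)\neq 0$ for $p\equiv 1\pmod 4$ (your $\psi(\mathfrak p)+\psi(\overline{\mathfrak p})$ concern) is proved in the paper not abstractly but by showing $\gamma_1(p^r)\not\equiv 0\pmod p$ via induction on the Hecke recursion with base case extracted from the explicit theta-series formula for $g_1$.
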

The behaviour of $\sum_{n\geq 0} C_{1^{\delta_1}\cdots m^{\delta_m}}(n)q^n$ is different depending on the parity of
 $\sum_{j=1}^m \delta_j$.  This is demonstrated by the differing shape of the vanishing sets in the following theorem.
\begin{thm}\label{thm:L133}
We have
\[
\mathcal{S}_{1^22^34^{-2}}=\mathcal{S}_{1^62^{-3}}=\left\{n\in\N: n=4^k(8m+7),\ k,m\in\N_0\right\}.
\]
\end{thm}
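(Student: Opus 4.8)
The plan is to rewrite both $\eta$-quotients as products of Jacobi theta functions and then reduce each vanishing statement to Legendre's three-square theorem, which asserts that $r_3(n):=\#\{(a,b,c)\in\Z^3:a^2+b^2+c^2=n\}$ vanishes precisely when $n=4^k(8m+7)$. Writing $\varphi(q):=\sum_{n\in\Z}q^{n^2}$, I would first record the classical identity (a consequence of the Jacobi triple product)
\[
\frac{(q;q)_\infty^2}{(q^2;q^2)_\infty}=\sum_{n\in\Z}(-1)^nq^{n^2}=\varphi(-q),
\]
together with $\varphi(q)\varphi(-q)=\varphi(-q^2)^2=\tfrac{(q^2;q^2)_\infty^4}{(q^4;q^4)_\infty^2}$. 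These immediately give
\[
\sum_{n\ge0}C_{1^62^{-3}}(n)q^n=\varphi(-q)^3,\qquad \sum_{n\ge0}C_{1^22^34^{-2}}(n)q^n=\varphi(-q)\varphi(-q^2)^2=\varphi(-q)^2\varphi(q).
\]

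For the quotient $1^62^{-3}$, I would expand $\varphi(-q)^3=\sum_n\big(\sum_{a^2+b^2+c^2=n}(-1)^{a+b+c}\big)q^n$ and use that $a+b+c\equiv a^2+b^2+c^2=n\pmod2$, so every term carries the same sign $(-1)^n$. Hence $C_{1^62^{-3}}(n)=(-1)^nr_3(n)$, and the three-square theorem gives $\mathcal{S}_{1^62^{-3}}=\{4^k(8m+7)\}$ at once.

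For the quotient $1^22^34^{-2}$ the same expansion gives $C_{1^22^34^{-2}}(n)=\sum_{a^2+b^2+c^2=n}(-1)^{a+b}$, where the unsigned variable $c$ comes from the factor $\varphi(q)$. Since $a+b\equiv n-c\pmod2$, this equals $(-1)^n(E_n-O_n)$, where $E_n$ and $O_n$ count the representations with third coordinate $c$ even, respectively odd. The inclusion $\{4^k(8m+7)\}\subseteq\mathcal{S}_{1^22^34^{-2}}$ is immediate, since then there are no representations at all. The subtle point, and the step I expect to be the main obstacle, is the reverse inclusion: a priori the signed difference $E_n-O_n$ could vanish through cancellation even when $r_3(n)>0$.

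I would remove this obstacle using the $S_3$-symmetry of $a^2+b^2+c^2$ in its three coordinates together with a count modulo $4$. Since squares are $\equiv0,1\pmod4$, for $n\equiv r\pmod4$ every representation of $n$ has exactly $r$ odd coordinates (with $r\in\{0,1,2,3\}$). Permuting coordinates then shows that the distinguished coordinate $c$ is odd in a fixed fraction of the $r_3(n)$ representations: none when $r=0$, a third when $r=1$, two thirds when $r=2$, and all of them when $r=3$. This yields $E_n-O_n=\lambda_r\,r_3(n)$ with $\lambda_0=1$, $\lambda_1=\tfrac13$, $\lambda_2=-\tfrac13$, $\lambda_3=-1$, all nonzero. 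Consequently $C_{1^22^34^{-2}}(n)$ is a nonzero rational multiple of $r_3(n)$, so it vanishes exactly when $r_3(n)=0$, and the three-square theorem again identifies $\mathcal{S}_{1^22^34^{-2}}$ with $\{4^k(8m+7)\}$, completing the proof.
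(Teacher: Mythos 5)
Your proposal is correct. For $\mathcal{S}_{1^62^{-3}}$ it is essentially identical to the paper's argument: both write $\eta^6(z)/\eta^3(2z)=\left(\sum_{n\in\Z}(-1)^nq^{n^2}\right)^3=\sum_{n\ge 0}(-1)^n r_{(1,1,1)}(n)q^n$ and invoke Legendre's three-square theorem. For $\mathcal{S}_{1^22^34^{-2}}$ your route is genuinely different. The paper identifies $f_3=\eta^2(z)\eta^3(2z)/\eta^2(4z)$ with $\mathcal{H}_{1,2}\big|U_2\big|\left(12S_{4,0}-4S_{4,1}-4S_{4,2}+12S_{4,3}\right)$, an identity of weight $\frac32$ forms on $\Gamma_0(16)$ verified by the valence formula plus a finite computer check, and then uses the external input $\Theta^3=12\,\mathcal{H}_{1,2}\big|U_2$ to reduce to $r_{(1,1,1)}(n)$. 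You instead write the quotient as $\varphi(-q)^2\varphi(q)$, so that $C_{1^22^34^{-2}}(n)=(-1)^n(E_n-O_n)$ with $E_n,O_n$ counting representations $a^2+b^2+c^2=n$ by the parity of $c$, and then use the mod-$4$ observation that every representation of $n$ has exactly $n\bmod 4$ odd coordinates together with the $S_3$-symmetry (giving $3O_n=r\cdot r_{(1,1,1)}(n)$ for $r=n\bmod 4$) to conclude $E_n-O_n=\frac{3-2r}{3}\,r_{(1,1,1)}(n)$ with nonzero proportionality constant; note your constants $\pm1,\pm\frac13$ are precisely the paper's sieving weights $12,4$ divided by $12$. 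What each approach buys: the paper's method slots into its general modular-forms machinery (Hurwitz class numbers, sieving operators, valence formula) used throughout and requires only checking three Fourier coefficients, while yours is self-contained and elementary, needing no modularity beyond the classical product formula for $\varphi(-q)$ and the identity $\varphi(q)\varphi(-q)=\varphi(-q^2)^2$, and no computer verification.
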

The paper is organized as follows: In Section \ref{sec:prelim}, we recall basic facts about modular forms. In Section \ref{sec:S236chi12}, we study the space $S_2(\Gamma_0(36),\chi_{12})$, where $\chi_{D}(n):=(\frac{D}{n})$ with $(\frac\cdot\cdot)$ the extended Legendre symbol. In Section \ref{sec:L52}, we prove \Cref{thm:L52}, in Section \ref{sec:L95} \Cref{thm:L95}, in Section \ref{sec:L65} \Cref{thm:L65}, and in Section \ref{sec:L133} \Cref{thm:L133}.
\section*{Acknowledgments}
The first author has received funding from the European Research Council (ERC) under the European Union’s Horizon 2020 research and innovation programme (grant agreement No. 101001179). The research of the fourth author was supported by grants from the Research Grants Council of the Hong Kong SAR, China (project numbers HKU 17314122 and HKU 17305923).
\section{Preliminaries}\label{sec:prelim}
\subsection{Modular forms}
Here we introduce modular forms, see e.g. \cite{O} for more details. As usual, for $d$ odd, we set
\[
\varepsilon_{d}:=\begin{cases} 1 &\text{if }d\equiv 1\pmod{4}\hspace{-1pt},\\ i&\text{if }d\equiv 3\pmod{4}\hspace{-1pt}.\end{cases}
\]
For $\kappa\in\frac{1}{2}\Z$ and $\gamma=\left(\begin{smallmatrix}a&b\\ c&d\end{smallmatrix}\right)\in\SL_2(\Z)$ ($4\mid c$ if $\kappa\notin \Z$), define the weight $\kappa$ \begin{it}slash operator\end{it} by
\[
f\big|_{\kappa}\gamma(z):=
\begin{cases}
\left( \frac cd \right) \varepsilon_d^{2\kappa}(cz+d)^{-\kappa} f(\gamma z)&\text{if }\kappa\in\Z+\frac{1}{2},\\
(cz+d)^{-\kappa} f(\gamma z)&\text{if }\kappa\in\Z.
\end{cases}
\]
For $\kappa\in\frac{1}{2}\Z$, $N\in\N$ ($4\mid N$ if $\kappa\notin\Z$), and a character $\chi$ (mod $N$), a function $f:\H\to\C$ is a \begin{it}holomorphic modular form of weight $\kappa$ on $\Gamma_0(N)$ with character $\chi$\end{it} if the following conditions hold:
\begin{enumerate}[leftmargin=*]
	\item The function $f$ is holomorphic on $\H$.
	\item We have $f|_{\kappa}\gamma  = \chi(d) f$ for $\gamma\in\Gamma_0(N)$.
	\item For $\gamma\in\SL_2(\Z)$, $
	(cz+d)^{-\kappa} f(\gamma z)$ is bounded as $z\to i\infty$.
\end{enumerate}
We denote the corresponding space of such forms by $M_{\kappa}\left(\Gamma_0(N),\chi\right)$. We call the equivalence classes of $\Gamma_0(N)\backslash (\Q\cup\{i\infty\})$ the \begin{it}cusps of $\Gamma_0(N)$\end{it}. If the function in (3) vanishes as $z\to i\infty$ for every cusp $\gamma(i\infty)$, then we call $f$ a \begin{it}cusp form\end{it}. We denote the corresponding space by $S_{\kappa}(\Gamma_0(N),\chi)$. We sometimes omit $\chi$ in the notation if it is trivial.

\subsection{Operators on modular forms}\label{sec:operators}
For $f(z)=\sum_{n\in\Z} c(n) q^n$ (with $q:=e^{2\pi i z}$) and $\ell\in\N$, we define the {\it U-operator} and the {\it V-operator} as
\begin{equation*}
 	f \mid U_\ell(z) := \sum_{n\in\Z} c(\ell n) q^n, \quad f \mid V_\ell(z) := f(\ell z).
\end{equation*}
Moreover, we define for $M\in\N$ and $m\in\Z$ the \textit{sieving operator}
\begin{equation*}
 	f \mid S_{M,m}(z) := \sum_{\substack{n\in\Z\\n\equiv m\pmod M}} c(n)q^n.
\end{equation*}
The actions of these operators on half-integral weight modular forms may be found in \cite[Lemma 2.3]{BK}. To state the result, let $\operatorname{rad}(n):=\prod_{p\mid n} p$ be the \begin{it}radical\end{it} of $n\in\N$ and recall that the \begin{it}conductor\end{it} of a character $\chi \pmod N$ is the minimal $N_{\chi}\mid N$ for which there exists a character $\psi \pmod N$ with $\chi(n)=\psi(n)$ for every $n\in\Z$ with $\gcd(n,N)=1$.

\begin{lem}\label{lem:operatorshalf}
Suppose that $f\in M_\kappa(\Gamma_0(N),\chi)$ with $\kappa\in\Z+\frac12$ and $4 \mid N$, and $\chi$ is a character of conductor $N_{\chi}\mid N$.
\noindent

\begin{enumerate}[leftmargin=*, label=\rm(\arabic*)]
\item We have $f|U_{\delta}\in M_\kappa(\Gamma_0(4\lcm(\frac{N}{4},\operatorname{rad}(\delta))),\chi\chi_{4\delta})$.

\item
Suppose that $M\mid 24$ and $M\not\equiv 2\pmod{4}$. Then $f|S_{M,m}\in M_\kappa(\Gamma_{0}(\lcm(N,M^2,MN_{\chi})),\chi)$.

\item
We have $f|V_{\delta}\in M_\kappa(\Gamma_0(N\delta),\chi\chi_{4\delta})$.
\end{enumerate}
\end{lem}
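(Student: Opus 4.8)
The plan is to realize each operator as a finite $\C$-linear combination of precompositions of $f$ with explicit matrices in $\GL_2^+(\Q)$, and then to reduce the claimed transformation law to a conjugation-and-reindexing computation for $\Gamma_0$. Writing $f=\sum_{n}c(n)q^{n}$, one has the averaging identities
\[
f\mid U_{\delta}(z)=\frac1\delta\sum_{j=0}^{\delta-1}f\!\left(\tfrac{z+j}{\delta}\right),\qquad f\mid S_{M,m}(z)=\frac1M\sum_{j=0}^{M-1}e^{-2\pi i mj/M}f\!\left(z+\tfrac{j}{M}\right),
\]
obtained by orthogonality of additive characters, while $f\mid V_{\delta}(z)=f(\delta z)$ is a single scaling. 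In each case the summand is $f$ precomposed with a matrix $A$ that is diagonal (for $V$), upper triangular of determinant $\delta$ (for $U$), or a rational translation (for $S$). Given $\gamma=\pmat{a&b\\c&d}$ in the proposed target group, the strategy is to check that $A\gamma$ can be rewritten as $\gamma' A'$ with $\gamma'\in\Gamma_0(N)$ and $A'$ a matrix from the same family (so the index $j$ is merely permuted), after which condition (2) of the definition of modular forms applies to $f|_\kappa\gamma'=\chi(d')f$, and one collects the terms.

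Part (3) is the cleanest instance and illustrates the mechanism. For $\gamma\in\Gamma_0(N\delta)$ put $\gamma'=\pmat{a&b\delta\\c/\delta&d}$; since $N\delta\mid c$ and $4\mid N$ we have $\gamma'\in\Gamma_0(N)$ with $4\mid c/\delta$ and the same lower-right entry $d$. A direct computation gives $f(\delta\gamma z)=f(\gamma'(\delta z))$, so substituting $f|_\kappa\gamma'=\chi(d)f$ and simplifying $\big(\tfrac{c}{\delta}\cdot\delta z+d\big)=(cz+d)$ yields
\[
(f\mid V_{\delta})\big|_{\kappa}\gamma=\chi(d)\,\frac{\left(\tfrac{c}{d}\right)}{\left(\tfrac{c/\delta}{d}\right)}\,(f\mid V_{\delta})=\chi(d)\left(\tfrac{\delta}{d}\right)(f\mid V_{\delta}).
\]
Since $\left(\tfrac{4}{d}\right)=1$ for odd $d$, the surplus Jacobi symbol is exactly $\chi_{4\delta}(d)$, which produces the twist $\chi\chi_{4\delta}$ and proves (3). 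Holomorphy and the growth condition are immediate because $V_\delta$ merely rescales $\H$.

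For part (1) the same bookkeeping applies to $A_j=\pmat{1&j\\0&\delta}$: pushing $\gamma$ through $A_j$ permutes the residues $j$ modulo $\delta$ and produces some $\gamma'\in\Gamma_0(N)$ once the level is divisible by $4\lcm(\tfrac N4,\operatorname{rad}(\delta))$, and the surplus Jacobi symbol again assembles into $\chi_{4\delta}(d)$. That the radical, rather than $\delta$ itself, suffices is seen by factoring $U_\delta$ into the prime operators $U_p$ with $p\mid\delta$ and noting that once $p$ divides the level a further $U_p$ does not enlarge it; this is consistent with the twist because $\chi_{4\delta}$ depends only on the squarefree part of $\delta$, hence only on the primes dividing $\delta$. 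For the sieving operator (2) one conjugates the translation $\pmat{1&j/M\\0&1}$ by $\gamma$; requiring $\pmat{1&j/M\\0&1}\gamma\pmat{1&j'/M\\0&1}^{-1}\in\Gamma_0(N)$ forces $M^2\mid c$ and a congruence on $d$ modulo $MN_\chi$ to keep $\chi(d')=\chi(d)$, whence the level $\lcm(N,M^2,MN_\chi)$; here the two multipliers coincide, so no extra twist appears. The hypotheses $M\mid 24$ and $M\not\equiv2\pmod4$ are precisely what keep the additive-character average inside the space of holomorphic forms, controlling the root of unity of order dividing $24$ acquired under $z\mapsto z+1$ and excluding the anomalous case $2\parallel M$.

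The routine part of all this is the matrix arithmetic; the genuine content, and the main obstacle, is the careful tracking of the half-integral weight multiplier system. One must follow how the factor $\left(\tfrac{c}{d}\right)\varepsilon_d^{2\kappa}$ transforms when $\gamma$ is replaced by $\gamma'$ --- which in the $U$ and $S$ cases changes both the lower-left and lower-right entries --- and verify, using quadratic reciprocity for the extended Jacobi symbol, that the discrepancy is exactly $\chi_{4\delta}(d)$ in (1) and (3) and trivial in (2). It remains to note that the growth condition (3) in the definition of a modular form is preserved at every cusp, which holds because each operator is a finite combination of slash actions by elements of $\GL_2^+(\Q)$, and these permute the cusps of the ambient congruence subgroup. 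Assembling these computations yields the three assertions; the detailed multiplier bookkeeping is carried out in \cite[Lemma 2.3]{BK}.
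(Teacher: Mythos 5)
The paper offers no proof of this lemma at all: it is imported verbatim from \cite[Lemma 2.3]{BK}, so there is no internal argument to compare your write-up against. Your conjugation-and-reindexing outline is the standard route to such statements, and it is sound where you actually carry it out: the verification of (3), including the identification of the surplus symbol $\left(\frac{\delta}{d}\right)=\chi_{4\delta}(d)$ for odd $d$, is correct, and reducing (1) to the prime operators $U_p$ together with the observation that $\chi_{4\delta}$ only sees the squarefree part of $\delta$ is the right way to see why $\operatorname{rad}(\delta)$, rather than $\delta$, suffices in the level. Two caveats. First, your explanation of the hypotheses in (2) is off the mark: the average $\frac1M\sum_{j}e^{-2\pi imj/M}f(z+j/M)$ is automatically holomorphic, and no eta-like root of unity under $z\mapsto z+1$ enters; the issue is equivariance, not holomorphy. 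The actual role of $M\mid 24$ is that pushing $\gamma$ through the translation matrices permutes the residues via $j\mapsto jd^{2}\pmod{M}$, and one needs $d^{2}\equiv1\pmod{M}$ for every $d$ coprime to $M$ (which holds precisely when $M\mid 24$) so that this permutation is the identity and the additive character $e^{-2\pi imj/M}$ is carried along unchanged; otherwise one lands in $f|S_{M,md^{2}}$ rather than $f|S_{M,m}$. The exclusion $M\not\equiv2\pmod{4}$ is what guarantees, at the stated level, that the new lower-right entry satisfies $d'\equiv d\pmod{4}$, so that $\varepsilon_{d'}=\varepsilon_{d}$; for $M\equiv 2\pmod 4$ the $2$-adic valuation of $\lcm(N,M^{2})/M$ can drop to $1$ and this fails. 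Second, you explicitly defer the multiplier bookkeeping --- which you yourself identify as the genuine content --- to \cite[Lemma 2.3]{BK}; that is no worse than what the paper does, but it means your write-up is an annotated citation plus a correct proof of (3), rather than a self-contained proof of all three parts.
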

We also require the following lemma for integral-weight modular forms.
\begin{lem}\label{lem:operators}
	Let $N\in\N$, $\chi$ a character (mod $N$) with conductor $N_{\chi}\mid N$, $\kappa\in\N$, and $f\in M_\kappa(\Gamma_{0}(N),\chi)$. Then the following hold:
	\begin{enumerate}[leftmargin=*,label=\rm(\arabic*)]
		\item For $\d\in\N$ we have $f|V_{\d}\in M_\kappa(\Gamma_{0}(N\d),\chi)$.
		\item For $M\in\N$ with $M\mid 24$, $f|S_{M,m}\in M_\kappa(\Gamma_{0}(\lcm(N,M^2,MN_{\chi})),\chi)$.
	\end{enumerate}
\end{lem}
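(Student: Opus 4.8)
The plan is to verify the three defining conditions of a holomorphic modular form (Section~\ref{sec:prelim}) directly for each operator, in both cases reducing to the modularity of $f$ on $\Gamma_0(N)$ through an explicit factorization of $2\times 2$ matrices. Both statements are the integral-weight counterparts of \Cref{lem:operatorshalf}, and the arguments run parallel to (indeed are simpler than) the half-integral case, since here there is no theta-multiplier or $\varepsilon_d$-factor to keep track of.

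For part (1) set $g:=f|V_\delta$, so that $g(z)=f(\delta z)$. Holomorphy on $\H$ is clear, and $g(z)=\sum_n c(n)q^{\delta n}$ exhibits holomorphy at $i\infty$. For the transformation law I would fix $\gamma=\left(\begin{smallmatrix}a&b\\c&d\end{smallmatrix}\right)\in\Gamma_0(N\delta)$ and use
\[
\begin{pmatrix}\delta&0\\0&1\end{pmatrix}\gamma=\gamma'\begin{pmatrix}\delta&0\\0&1\end{pmatrix},\qquad \gamma':=\begin{pmatrix}a&b\delta\\ c/\delta&d\end{pmatrix}.
\]
Because $N\delta\mid c$, the matrix $\gamma'$ lies in $\Gamma_0(N)$ and has the same lower-right entry $d$; applying $f|_\kappa\gamma'=\chi(d)f$ and unwinding the factor $(cz+d)^{-\kappa}$ gives $g|_\kappa\gamma=\chi(d)g$. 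The boundedness required at the remaining cusps follows from that of $f$ upon applying the same factorization to an arbitrary $\sigma\in\SL_2(\Z)$.

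For part (2) the starting point is the additive-character expansion
\[
f|S_{M,m}(z)=\frac1M\sum_{j=0}^{M-1}\zeta_M^{-jm}\,f\!\left(z+\tfrac{j}{M}\right),\qquad \zeta_M:=e^{2\pi i/M},
\]
which is immediate from orthogonality of the characters $j\mapsto\zeta_M^{jn}$. Writing $g:=f|S_{M,m}$ and fixing $\gamma\in\Gamma_0(N')$ with $N':=\lcm(N,M^2,MN_\chi)$, I would factor for each $j$
\[
\begin{pmatrix}1&j/M\\0&1\end{pmatrix}\gamma=\gamma_j\begin{pmatrix}1&j'/M\\0&1\end{pmatrix},\qquad j'\equiv d^2 j\pmod M,
\]
and check that $M^2\mid N'$, $N\mid N'$ and $MN_\chi\mid N'$ ensure, respectively, that $\gamma_j$ has integral entries, that $\gamma_j\in\Gamma_0(N)$, and that the lower-right entry of $\gamma_j$ is congruent to $d$ modulo $N_\chi$ (so that $\chi$ is unchanged). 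A short computation then shows that the corresponding automorphy factor collapses to $(cz+d)^\kappa$, yielding $f(\gamma z+j/M)=\chi(d)(cz+d)^\kappa f(z+j'/M)$.

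The crux of the argument — and the one place the hypothesis $M\mid 24$ is used — is reassembling $g$ after this substitution. Re-indexing the sum by $j'$ turns it into $f|S_{M,d^{-2}m}$, and this equals $g=f|S_{M,m}$ for every admissible $d$ exactly when $d^2\equiv 1\pmod M$ for all $d\in(\Z/M\Z)^\times$. Since $(\Z/M\Z)^\times$ is $2$-torsion precisely for the divisors of $24$, the hypothesis $M\mid 24$ forces $j'\equiv j\pmod M$ and hence $g|_\kappa\gamma=\chi(d)g$. It remains to note that $g$ is holomorphic on $\H$, that its expansion at $i\infty$ is the subseries $\sum_{n\equiv m\,(M)}c(n)q^n$ of that of $f$, and that holomorphy at the remaining cusps is inherited from $f$, since $g$ is a finite linear combination of the translates $f(\cdot+j/M)$, each the composition of $f$ with an element of $\GL_2^+(\Q)$ and therefore bounded at every cusp.
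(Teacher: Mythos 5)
The paper states this lemma without proof, treating it as standard (only the half-integral analogue, \Cref{lem:operatorshalf}, is given a citation), so there is no in-paper argument to compare against; judged on its own, your proof is the standard one and is correct. The matrix identity in part (1), the orthogonality expansion and the conjugation $\left(\begin{smallmatrix}1&j/M\\0&1\end{smallmatrix}\right)\gamma=\gamma_j\left(\begin{smallmatrix}1&j'/M\\0&1\end{smallmatrix}\right)$ with $j'\equiv d^2j\pmod M$ in part (2), the role of each divisibility $N\mid N'$, $M^2\mid N'$, $MN_\chi\mid N'$, and the identification of $M\mid 24$ with $(\Z/M\Z)^\times$ being $2$-torsion are all exactly right. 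Two points are stated more tersely than they deserve: in part (1) the cusp condition at a general $\sigma\in\SL_2(\Z)$ does not follow from "the same factorization" verbatim, since $c/\delta$ need not be integral for such $\sigma$; one instead writes $\left(\begin{smallmatrix}\delta&0\\0&1\end{smallmatrix}\right)\sigma=\sigma'\beta$ with $\sigma'\in\SL_2(\Z)$ and $\beta$ upper triangular of determinant $\delta$ (elementary divisors), and similarly in part (2) the boundedness of $f|_\kappa\beta$ at $i\infty$ for $\beta\in\GL_2^+(\Q)$ is the standard fact being invoked. These are routine and do not constitute gaps.
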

We require the following lemma about products of half-integral weight modular forms.
\begin{lem}\label{lem:halfintmult}
	Suppose $f_1\in M_{\kappa_1+\frac12}(\Gamma_0(N),\psi_1)$, $f_2\in M_{\kappa_2+\frac12}(\Gamma_0(N),\psi_2)$, $f_3\in M_{\kappa_3}(\Gamma_0(N),\psi_3)$ for some $\kappa_1,\kappa_2,\kappa_3\in\N_0$, $N\in\N$, and characters $\psi_1$, $\psi_2$, $\psi_3$ modulo a divisor of $N$. Then $f_1f_2\in M_{\kappa_1+\kappa_2+1}$ $(\Gamma_0(N),\psi_1\psi_2\chi_{-4}^{\kappa_1+\kappa_2+1})$	and $f_1f_3\in M_{\kappa_1+\kappa_3+\frac{1}{2}}(\Gamma_0(N),\psi_1\psi_3\chi_{-4}^{\kappa_3})$.
\end{lem}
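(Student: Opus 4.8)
The plan is to verify the three defining conditions of a holomorphic modular form directly for the products $f_1f_2$ and $f_1f_3$, working one matrix $\gamma=\left(\begin{smallmatrix}a&b\\c&d\end{smallmatrix}\right)\in\Gamma_0(N)$ at a time. Holomorphy on $\H$ (condition (1)) is immediate, a product of holomorphic functions being holomorphic. Boundedness at the cusps (condition (3)) is inherited by factoring: for any $\gamma\in\SL_2(\Z)$ one writes $(cz+d)^{-(k_1+k_2)}f_1(\gamma z)f_2(\gamma z)=[(cz+d)^{-k_1}f_1(\gamma z)][(cz+d)^{-k_2}f_2(\gamma z)]$ (with $k_i=\kappa_i+\tfrac12$), a product of two bounded quantities, hence bounded; the identical factoring works for $f_1f_3$. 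Thus the only genuine content is the transformation law (condition (2)), and the heart of the matter is bookkeeping the half-integral weight theta-multiplier.

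For the first assertion, the key identity I would record is $\varepsilon_d^2=\chi_{-4}(d)$ for all odd $d$ (checked against the definition of $\varepsilon_d$ in the two residue classes modulo $4$). From the half-integral slash operator and condition (2) for $f_i$, I would rewrite the automorphy relation as $(cz+d)^{-k_i}f_i(\gamma z)=\psi_i(d)\left(\frac cd\right)\varepsilon_d^{-(2\kappa_i+1)}f_i(z)$, using that $\left(\frac cd\right)=\pm1$ is its own inverse. Multiplying the relations for $i=1,2$ and setting $K:=\kappa_1+\kappa_2+1=k_1+k_2$ gives $(cz+d)^{-K}(f_1f_2)(\gamma z)=\psi_1(d)\psi_2(d)\left(\frac cd\right)^2\varepsilon_d^{-(2\kappa_1+2\kappa_2+2)}(f_1f_2)(z)$. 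Here $\left(\frac cd\right)^2=1$ and $\varepsilon_d^{-(2\kappa_1+2\kappa_2+2)}=(\varepsilon_d^2)^{-K}=\chi_{-4}(d)^{K}$, so the left side is exactly $(f_1f_2)|_K\gamma$ and the right side is $(\psi_1\psi_2\chi_{-4}^{K})(d)\,f_1f_2$, which is the claimed integral-weight modularity on $\Gamma_0(N)$.

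For the second assertion I would argue the same way, now mixing the half-integral slash for $f_1$ with the integral slash for $f_3$. Condition (2) for $f_3$ reads $(cz+d)^{-\kappa_3}f_3(\gamma z)=\psi_3(d)f_3(z)$, and multiplying this by the rewritten relation for $f_1$ yields $(cz+d)^{-(\kappa_1+\kappa_3+\frac12)}(f_1f_3)(\gamma z)=\psi_1(d)\psi_3(d)\left(\frac cd\right)\varepsilon_d^{-(2\kappa_1+1)}(f_1f_3)(z)$. Applying the half-integral slash of weight $k:=\kappa_1+\kappa_3+\tfrac12$ multiplies in the factor $\left(\frac cd\right)\varepsilon_d^{2k}=\left(\frac cd\right)\varepsilon_d^{2\kappa_1+2\kappa_3+1}$, and after cancelling $\left(\frac cd\right)^2=1$ the power of $\varepsilon_d$ collapses to $\varepsilon_d^{2\kappa_3}=\chi_{-4}(d)^{\kappa_3}$. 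Hence $(f_1f_3)|_k\gamma=(\psi_1\psi_3\chi_{-4}^{\kappa_3})(d)\,f_1f_3$, as claimed, again at level $N$.

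The only step requiring care is tracking the multiplier system: the factors $\left(\frac cd\right)$ and the powers of $\varepsilon_d$ must be combined correctly, and the reduction $\varepsilon_d^{2}=\chi_{-4}(d)$ is precisely what converts the leftover even power of $\varepsilon_d$ into the character twist by $\chi_{-4}$. Once this identity is in hand, both assertions reduce to the elementary multiplication of automorphy factors, with no change of level since $f_1,f_2,f_3$ all live on $\Gamma_0(N)$.
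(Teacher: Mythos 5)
Your proof is correct: the identity $\varepsilon_d^2=\chi_{-4}(d)$ and the cancellation $\left(\frac cd\right)^2=1$ are exactly what is needed to combine the automorphy factors, and holomorphy and boundedness at the cusps do factor through the product as you say. The paper states this lemma without proof as a standard fact, and your direct multiplier computation is precisely the standard argument it implicitly relies on.
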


For $f_1\in M_{\kappa_1}(\Gamma,\chi_1)$ and $f_1\in M_{\kappa_2}(\Gamma,\chi_2)$, for some group $\Gamma\subseteq\SL_2(\Z)$, and for $\ell\in\IN_0$, the $\ell$-th \begin{it}Rankin--Cohen bracket\end{it} is defined by
\[
\left[f_1,f_2\right]_\ell:=\frac{1}{(2\pi i)^\ell}\sum_{r=0}^\ell \frac{(-1)^r\Gamma(\kappa_1+\ell)\Gamma(\kappa_2+\ell)}{r!(\ell-r)!\Gamma(\kappa_1+r)\Gamma(\kappa_2+\ell-r)} f_1^{(r)}
f_2^{(\kappa-r)}.
\]
By \cite[Corollary 7.2]{Cohen} we have the following.
\begin{lem}\label{lem:RankinCohen}
Suppose that $f_j\in M_{\kappa_1}(\Gamma_0(N),\psi_j)$ ($j\in\{1,2\}$) for some $\kappa_j\in\N_0$, $N\in\N$, and characters $\psi_j\pmod{N}$. Then, for $\ell\in\N_0$, $[f_1,f_2]_{\ell}\in M_{\kappa_1+\kappa_2+2\ell+1}(\Gamma_0(N),\psi_1\psi_2\chi_{-4}^{\kappa_1+\kappa_2+1})$.
\end{lem}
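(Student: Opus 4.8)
The plan is to reduce the statement to the general modularity of the Rankin--Cohen bracket recorded in \cite[Corollary 7.2]{Cohen}, and then to match the resulting multiplier system with the character in the statement. Throughout I write $k_j:=\kappa_j+\frac12$ for the (half-integral) weights of $f_1$ and $f_2$, so that the target weight is $k_1+k_2+2\ell=\kappa_1+\kappa_2+2\ell+1$. Cohen's result asserts that the bilinear differential operator $[f_1,f_2]_\ell$ is covariant: for $\gamma=\left(\begin{smallmatrix} a&b\\ c&d\end{smallmatrix}\right)$, the value $[f_1,f_2]_\ell(\gamma z)$ equals $(cz+d)^{k_1+k_2+2\ell}$ times $[f_1,f_2]_\ell(z)$, up to the product of the two automorphy factors of $f_1$ and $f_2$. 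The content here is that the individual derivatives $f_j^{(r)}$ are \emph{not} modular, but the rational coefficients in the definition of $[\cdot,\cdot]_\ell$ are arranged precisely so that all inhomogeneous correction terms cancel; this cancellation is exactly Cohen's corollary, which I would quote rather than rederive.

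Granting the covariance, the first step is to identify the product multiplier with $\psi_1\psi_2\chi_{-4}^{\kappa_1+\kappa_2+1}$. Since $4\mid N$, solving $f_j|_{k_j}\gamma=\psi_j(d)f_j$ for $f_j(\gamma z)$ and multiplying the two automorphy factors, the non-$(cz+d)$ part equals
\[
\psi_1(d)\psi_2(d)\left(\frac cd\right)^2\varepsilon_d^{-2(k_1+k_2)}=\psi_1\psi_2(d)\,\chi_{-4}(d)^{\kappa_1+\kappa_2+1},
\]
where I used $\left(\frac cd\right)^2=1$ and $\varepsilon_d^2=\chi_{-4}(d)\in\{\pm1\}$. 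Thus $[f_1,f_2]_\ell$ transforms under $\Gamma_0(N)$ with weight $\kappa_1+\kappa_2+2\ell+1$ and character $\psi_1\psi_2\chi_{-4}^{\kappa_1+\kappa_2+1}$. This is the same bookkeeping as in \Cref{lem:halfintmult}; indeed the case $\ell=0$, where $[f_1,f_2]_0$ is a nonzero multiple of $f_1f_2$, recovers that lemma and serves as a consistency check.

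The remaining step is holomorphy. On $\H$ the bracket is a polynomial in the holomorphic functions $f_1,f_2$ and their derivatives, hence holomorphic. For the cusps I would again use covariance: for $\sigma\in\SL_2(\Z)$ the function $[f_1,f_2]_\ell(\sigma z)$ is, up to an automorphy factor, the bracket built from $f_1(\sigma z)$ and $f_2(\sigma z)$; since $f_1$ and $f_2$ are holomorphic at $\sigma(i\infty)$, these have local $q$-expansions supported on nonnegative exponents. Differentiation and multiplication preserve this property, so $(cz+d)^{-(\kappa_1+\kappa_2+2\ell+1)}[f_1,f_2]_\ell(\sigma z)$ is bounded as $z\to i\infty$, giving holomorphy at every cusp and completing the proof.

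I expect the only genuine obstacle to be organizational: because the derivatives $f_j^{(r)}$ are not individually modular, neither the multiplier computation nor the cusp argument can be carried out term-by-term, and one must apply them to the full bracket where Cohen's cancellation is in force. A small related point to check is that the normalization in the definition above (the $\Gamma$-factors and the factor $(2\pi i)^{-\ell}$) matches the normalization under which \cite[Corollary 7.2]{Cohen} is stated, so that the covariance law applies verbatim; once this is confirmed, the weight shift by $2\ell$ and the product multiplier are immediate.
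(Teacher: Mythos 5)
Your proposal is correct and takes the same route as the paper, which proves this lemma simply by citing \cite[Corollary 7.2]{Cohen} with no further argument. Your additional bookkeeping — reading the hypotheses as $f_j\in M_{\kappa_j+\frac12}(\Gamma_0(N),\psi_j)$ (which is what the stated weight $\kappa_1+\kappa_2+2\ell+1$ and character $\psi_1\psi_2\chi_{-4}^{\kappa_1+\kappa_2+1}$ force, and what the application to $[\Theta,\Theta|V_9]_1$ confirms) and verifying that the product of the two half-integral automorphy factors contributes $\varepsilon_d^{-2(k_1+k_2)}=\chi_{-4}(d)^{\kappa_1+\kappa_2+1}$ — is accurate and consistent with the paper's usage.
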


\subsection{Hecke eigenforms}
For $N,\kappa\in\N$, $\chi$ a character (mod $N$), and $p$ a prime we define the \begin{it}Hecke operator\end{it} $T_p$ acting on $f(z)=\sum_{n\ge0}c(n)q^n\in M_\kappa(\Gamma_0(N),\chi)$ by  (see \cite[Definition 2.1]{O})
\[
f\big|T_p(z):=\sum_{n\ge0}\left(c(pn)+\chi(p)p^{\kappa-1}c\left(\frac{n}{p}\right)\right)q^n,
\]
where $c(\alpha):=0$ for $\alpha\in\Q\setminus\N_0$. We call a simultaneous eigenfunction under the Hecke operators $T_p$ for $p\nmid N$ a \begin{it}Hecke eigenform\end{it}. For $\kappa\in\N$, the space $S_\kappa(\Gamma_0(N),\chi)$ splits into the old space, spanned by the images of $V_\d$ on $S_\kappa(\Gamma_0(M),\chi)$ for $\d\mid \frac{N}{M}$ and $M\mid N$ with $M<N$, and the new space is the orthogonal complement of the old space in $S_\kappa(\Gamma_0(N),\chi)$ with respect to the \begin{it}Petersson inner product\end{it} ($z=x+iy$, $f,g\in S_\kappa(\Gamma_0(N),\chi)$)
\[
\left<f,g\right>:=\frac{1}{\left[\SL_2(\Z):\Gamma_0(N)\right]} \int_{\Gamma_0(N)\backslash\H}f(z)\overline{g(z)} y^\kappa \frac{dxdy}{y^2}.
\]
Hecke eigenforms in the new space are called \begin{it}newforms\end{it}. Those newforms whose Fourier expansions $\sum_{n\ge1} c(n)q^n$ have $c(1)=1$ are called  \begin{it}normalized newforms\end{it} (also known as \begin{it}primitive forms\end{it}). We require the following.
\begin{lem}\label{lem:Hecke}
	Let $\kappa,N\in\N$ and $\chi$ a character (mod $N$). Suppose that $f(z)=\sum_{n\geq 1} c(n)q^n$ is a normalized newform in $S_\kappa(\Gamma_0(N),\chi)$. Then the Fourier coefficients $c(n)$ are multiplicative and for every $p\nmid N$ and $r\in\N$ we have
	\[
		c\left(p^{r}\right)= c(p) c\left(p^{r-1}\right) - \chi(p) p^{\kappa-1} c\left(p^{r-2}\right).
	\]
\end{lem}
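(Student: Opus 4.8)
The plan is to extract both assertions from the standard Hecke theory, using the normalization $c(1)=1$ to identify the Hecke eigenvalues with the Fourier coefficients, and then translating the algebraic relations among the Hecke operators into arithmetic relations among the $c(n)$.

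I would first treat the recursion, which needs only that $f$ is a $T_p$-eigenform for $p\nmid N$. Write $f|T_p=\lambda_p f$. Comparing the coefficients of $q^1$ on both sides, the defining formula for $T_p$ gives $\lambda_p c(1)=c(p)+\chi(p)p^{\kappa-1}c(p^{-1})=c(p)$, since $c(p^{-1})=0$; hence $\lambda_p=c(p)$ by normalization. Comparing instead the coefficients of $q^{p^{r-1}}$ yields
\[
c(p)\,c\!\left(p^{r-1}\right)=c\!\left(p^{r}\right)+\chi(p)p^{\kappa-1}c\!\left(p^{r-2}\right),
\]
which rearranges to the stated three-term recursion (and reduces to the trivial identity $c(p)=c(p)$ when $r=1$).

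For multiplicativity I would upgrade $f$ to a simultaneous eigenform of \emph{all} $T_n$ with $\gcd(n,N)=1$. The operators $\{T_n:\gcd(n,N)=1\}$ commute with one another and with the $T_p$ for $p\nmid N$, so each $T_n$ preserves the simultaneous $\{T_p\}$-eigenspace containing $f$; by the multiplicity one theorem for newforms this eigenspace is one-dimensional, whence $f|T_n=\lambda_n f$ and, reading off the coefficient of $q^1$ exactly as above, $\lambda_n=c(n)$. The Hecke relation $T_mT_n=T_{mn}$ for $\gcd(m,n)=1$ then gives $c(m)c(n)=c(mn)$ for such $m,n$ coprime to $N$.

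The remaining, and only genuinely delicate, point is multiplicativity across the primes dividing $N$. Here I would appeal to the Atkin--Lehner--Li theory of newforms: a newform is additionally an eigenform for the operators $U_p$ with $p\mid N$, its local eigenvalue at each bad prime pins down $c(p^r)$ for all $r$, and these values combine with the good-prime eigenvalues to make $c$ multiplicative in the usual sense, i.e. $c(mn)=c(m)c(n)$ whenever $\gcd(m,n)=1$. This invocation of strong multiplicity one together with the newform structure theorem is the main input; everything else is the bookkeeping of the previous two paragraphs, and for the applications in this paper it would in any case suffice to use multiplicativity at the good primes and the explicit recursion above.
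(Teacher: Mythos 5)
The paper offers no proof of this lemma; it is quoted as a standard fact of Hecke--Atkin--Lehner--Li theory (the surrounding text cites \cite{O} for the Hecke operator and newform definitions), so there is no argument of the authors' to compare against. Your write-up is the standard and correct derivation: the recursion follows exactly as you say from $f|T_p=c(p)f$ by comparing the coefficients of $q^{p^{r-1}}$, and full multiplicativity (including at primes dividing $N$) is indeed the one genuinely non-elementary input, correctly attributed to the newform structure theory; the only simplification available is that at good primes you do not need multiplicity one, since $T_{p^r}$ is already a polynomial in $T_p$ and so every $T_n$ with $\gcd(n,N)=1$ automatically acts on $f$ by a scalar.
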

With $d(n)$ the number of divisors of $n$, Deligne \cite{Deligne} proved the following bound.
\begin{thm}\label{thm:Deligne}
If $f(z)=\sum_{n\geq 1} c(n)q^n$ is a normalized newform in $S_\kappa(\Gamma_0(N),\chi)$, then
\[
	|c(n)| \le d(n) n^\frac{\kappa-1}2.
\]
\end{thm}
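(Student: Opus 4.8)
The plan is to reduce the global bound to a purely local bound at each prime, and then to isolate the one genuinely deep ingredient, which is the Ramanujan--Petersson estimate at the good primes. By \Cref{lem:Hecke} the coefficients $c(n)$ are multiplicative, so $c(n)=\prod_{p^a\|n}c(p^a)$. Since $d(n)=\prod_{p^a\|n}(a+1)$ is multiplicative and $n^{\frac{\kappa-1}{2}}=\prod_{p^a\|n}p^{\frac{a(\kappa-1)}{2}}$ is completely multiplicative, the inequality $|c(n)|\le d(n)\,n^{\frac{\kappa-1}{2}}$ follows by taking the product over the prime factorization of $n$, once I establish for every prime $p$ and every $r\in\N$ that
\[
\left|c\left(p^{r}\right)\right|\le (r+1)\,p^{\frac{r(\kappa-1)}{2}}.
\]

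For a prime $p\nmid N$, I would use the Hecke recursion from \Cref{lem:Hecke}, which says that $\{c(p^r)\}_{r\ge 0}$ satisfies the linear recurrence with characteristic polynomial $X^2-c(p)X+\chi(p)p^{\kappa-1}$, with initial values $c(1)=1$ and $c(p)$. Writing the roots as $\alpha_p,\beta_p$, so that $\alpha_p+\beta_p=c(p)$ and $\alpha_p\beta_p=\chi(p)p^{\kappa-1}$, the recursion has the closed-form solution
\[
c\left(p^{r}\right)=\sum_{j=0}^{r}\alpha_p^{\,j}\beta_p^{\,r-j},
\]
which is valid whether or not the roots coincide (when $\alpha_p=\beta_p$ it reduces to $(r+1)\alpha_p^{r}$). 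If one grants that $|\alpha_p|=|\beta_p|=p^{\frac{\kappa-1}{2}}$, then each of the $r+1$ summands has absolute value exactly $p^{\frac{r(\kappa-1)}{2}}$, and the triangle inequality yields precisely $|c(p^r)|\le (r+1)p^{\frac{r(\kappa-1)}{2}}$. I note that the naive induction applying the triangle inequality directly to the recursion (using only $|c(p)|\le 2p^{\frac{\kappa-1}{2}}$) produces a far worse constant, so the eigenvalue factorization, together with the \emph{equality} of the two moduli, is essential to obtain the sharp factor $r+1$.

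The estimate $|\alpha_p|=|\beta_p|=p^{\frac{\kappa-1}{2}}$ is the main obstacle and the only non-elementary step in the argument. It does not follow formally from the modularity of $f$; it is Deligne's theorem. The standard route attaches to $f$ a compatible system of $\ell$-adic Galois representations in which $c(p)$ appears as the trace of Frobenius at $p$, so that $\alpha_p,\beta_p$ are the Frobenius eigenvalues; for $\kappa\ge 2$ this representation is cut out of the étale cohomology of a Kuga--Sato variety, and Deligne's proof of the Weil conjectures forces the Frobenius eigenvalues to be pure of weight $\kappa-1$, that is, $|\alpha_p|=|\beta_p|=p^{\frac{\kappa-1}{2}}$. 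Accepting this deep input, both eigenvalues lie on the circle of radius $p^{\frac{\kappa-1}{2}}$ and the estimate of the previous paragraph applies verbatim.

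Finally, for the finitely many bad primes $p\mid N$, the relevant facts come from the classical theory of newforms (Atkin--Lehner--Li) rather than from the Weil conjectures: for a newform one has the multiplicativity $c(p^r)=c(p)^r$ together with $|c(p)|\le p^{\frac{\kappa-1}{2}}$ (indeed $c(p)=0$, or $|c(p)|=p^{\frac{\kappa-2}{2}}$, or $|c(p)|=p^{\frac{\kappa-1}{2}}$ according to the local type of $f$ at $p$). Hence $|c(p^r)|=|c(p)|^r\le p^{\frac{r(\kappa-1)}{2}}\le (r+1)p^{\frac{r(\kappa-1)}{2}}$, so the prime-power bound holds at bad primes as well. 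Assembling the local estimates over all primes through the multiplicativity reduction of the first paragraph then gives $|c(n)|\le d(n)\,n^{\frac{\kappa-1}{2}}$, completing the argument.
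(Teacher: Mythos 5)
The paper does not prove this statement at all: it is quoted as Deligne's theorem with a citation to \emph{La conjecture de Weil I}, so there is no internal proof to compare against. Your write-up is the standard (and correct) deduction of the stated bound from the deep input: multiplicativity reduces to prime powers; at $p\nmid N$ the Hecke recursion of \Cref{lem:Hecke} gives the closed form $c(p^r)=\sum_{j=0}^r\alpha_p^j\beta_p^{r-j}$, and the purity $|\alpha_p|=|\beta_p|=p^{\frac{\kappa-1}{2}}$ — which you rightly identify as the irreducible non-elementary step, equivalent to Deligne's resolution of the Weil conjectures — yields the sharp factor $r+1$; at $p\mid N$ the Atkin--Lehner--Li classification gives $|c(p)|\le p^{\frac{\kappa-1}{2}}$ and $c(p^r)=c(p)^r$. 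The only caveat worth recording is that your Kuga--Sato/\'etale-cohomology route covers $\kappa\ge 2$; for $\kappa=1$ one needs the separate Deligne--Serre argument (Artin representations with finite image). Since the paper only applies the theorem to newforms of weight $2$ and $3$, this does not affect anything here, but the theorem as stated allows all $\kappa$.
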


\subsection{Eisenstein series}\label{sec:Eisenstein}
Let $\kappa\in\N$ and $\chi,\psi$ primitive characters. We require the modular properties of the Eisenstein series
\[
	E_{\kappa,\chi,\psi}(z):=\mathbbm{1}_{\chi=\chi_1}L(1-\kappa,\psi)+\mathbbm{1}_{\psi=\chi_1}\mathbbm{1}_{\kappa=1}L(0,\chi)+2\sum_{n\geq 1}\sum_{d\mid n}\chi\left(\frac{n}{d}\right)\psi(d) d^{\kappa-1}q^n,
\]
where $L(s,\chi):=\sum_{n\geq 1} \chi(n)n^{-s}$ is defined for $\re(s)>1$ and is meromorphically continued to the whole complex plane. Moreover $\mathbbm{1}_{S}:=1$ if a statement $S$ is true, and $\mathbbm{1}_S:=0$ if $S$ is false. The following modular properties may be found in \cite[Theorem 4.5.1, Theorem 4.6.2, Theorem 4.8.1]{DiamondShurman}.
\begin{lem}\label{lem:Eisenstein}
Suppose that $\kappa,d\in\N$, $\chi$ and $\psi$ are primitive characters of conductors $N_{\chi}$ and $N_{\psi}$, respectively, with $\chi(-1)\psi(-1)=(-1)^\kappa$.
\begin{enumerate}[leftmargin=*,label=\rm(\arabic*)]
\item If $\kappa\neq 2$, then $E_{\kappa,\chi,\psi}\big|V_d \in M_\kappa\left(\Gamma_0\left(N_{\chi}N_{\psi} d\right),\chi\psi\right)$.
\item If $(\chi,\psi)\neq (\chi_1,\chi_1)$, then $E_{2,\chi,\psi}\big|V_d \in M_2\left(\Gamma_0\left(N_{\chi}N_{\psi} d\right),\chi\psi\right)$. If $(\chi,\psi)= (\chi_1,\chi_1)$, then $E_{2,\chi_1,\chi_1}-dE_{2,\chi_1,\chi_1}\big|V_d \in M_2\left(\Gamma_0(d)\right)$.
\end{enumerate}
\end{lem}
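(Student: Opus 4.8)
The plan is to identify $E_{\kappa,\chi,\psi}$ with a normalization of the standard Eisenstein series attached to an ordered pair of primitive characters, so that the modularity assertions reduce directly to the cited theorems of Diamond--Shurman, with the effect of $V_d$ supplied by \Cref{lem:operators}. First I would match normalizations: setting $\sigma^{\chi,\psi}_{\kappa-1}(n):=\sum_{d\mid n}\chi(n/d)\psi(d)d^{\kappa-1}$, the nonconstant part $2\sum_{n\geq1}\sigma^{\chi,\psi}_{\kappa-1}(n)q^n$ of $E_{\kappa,\chi,\psi}$ is proportional to the Fourier expansion of the Eisenstein series treated in \cite{DiamondShurman}, while its constant term $\mathbbm{1}_{\chi=\chi_1}L(1-\kappa,\psi)+\mathbbm{1}_{\psi=\chi_1}\mathbbm{1}_{\kappa=1}L(0,\chi)$ is the unique value rendering the whole series modular, namely the one recorded there (via $L(1-\kappa,\psi)=-B_{\kappa,\psi}/\kappa$). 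With this identification in place, the hypothesis $\chi(-1)\psi(-1)=(-1)^\kappa$ is precisely the parity condition ensuring the Eisenstein series does not vanish, and each cited theorem yields $E_{\kappa,\chi,\psi}\in M_\kappa(\Gamma_0(N_\chi N_\psi),\chi\psi)$ in its respective weight range.

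For part (1) I would split into $\kappa\geq3$ and $\kappa=1$. When $\kappa\geq3$ the defining series converges absolutely and \cite[Theorem 4.5.1]{DiamondShurman} gives $E_{\kappa,\chi,\psi}\in M_\kappa(\Gamma_0(N_\chi N_\psi),\chi\psi)$ directly; when $\kappa=1$ the same membership follows from \cite[Theorem 4.8.1]{DiamondShurman}, where convergence is obtained by analytic continuation. In either case \Cref{lem:operators}(1), applied with $\delta=d$, promotes this to $E_{\kappa,\chi,\psi}\big|V_d\in M_\kappa(\Gamma_0(N_\chi N_\psi d),\chi\psi)$, since in integral weight $V_d$ multiplies the level by $d$ while preserving the character. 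This settles (1).

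For part (2) with $(\chi,\psi)\neq(\chi_1,\chi_1)$, the weight-two series is still genuinely modular by \cite[Theorem 4.6.2]{DiamondShurman}: having at least one nontrivial character removes the anomalous term in the transformation law, so $E_{2,\chi,\psi}\in M_2(\Gamma_0(N_\chi N_\psi),\chi\psi)$, and \Cref{lem:operators}(1) again yields $E_{2,\chi,\psi}\big|V_d\in M_2(\Gamma_0(N_\chi N_\psi d),\chi\psi)$. The remaining case $(\chi,\psi)=(\chi_1,\chi_1)$ is the classical exception: here $E_{2,\chi_1,\chi_1}$ is a scalar multiple of the quasimodular Eisenstein series $E_2$, whose weight-two transformation carries an explicit additive anomaly and so fails to be modular on its own. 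The standard remedy is to pass to $E_{2,\chi_1,\chi_1}-dE_{2,\chi_1,\chi_1}\big|V_d$: the anomalies contributed by the two copies of $E_2$ cancel on $\Gamma_0(d)$, leaving a holomorphic weight-two form on $\Gamma_0(d)$ with trivial character, exactly as asserted.

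The main obstacle is bookkeeping rather than conceptual. One must reconcile the normalization and, crucially, the ordering of the two characters between the definition of $E_{\kappa,\chi,\psi}$ and the conventions of \cite{DiamondShurman} — in particular determining which character contributes to the level and pinning down the exact constant term — and one must isolate the weight-two trivial-character case, where the underlying Eisenstein series is only quasimodular. Once these points are verified, every assertion of the lemma follows immediately from the three cited theorems together with \Cref{lem:operators}(1).
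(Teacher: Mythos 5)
Your proposal is correct and follows essentially the same route as the paper, which proves nothing beyond citing \cite[Theorems 4.5.1, 4.6.2, 4.8.1]{DiamondShurman} for the modularity of $E_{\kappa,\chi,\psi}$ in the three weight ranges and absorbing the $V_d$ into the level. Your additional care with the normalization, the ordering of the characters, and the quasimodular exception for $(\chi,\psi)=(\chi_1,\chi_1)$ is exactly the bookkeeping the paper leaves implicit.
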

\noindent The subspace of modular forms formed by linear combinations of $E_{\kappa,\chi,\psi}\big|V_d$ is the \begin{it}Eisenstein series subspace\end{it}.
 We split a modular form $f=E+g$ where $E$ is contained in the Eisenstein series subspace and $g$ is a cusp form.  We call $E$ the \begin{it}Eisenstein series part\end{it} of $f$ 
and $g$ 
\rm
the \begin{it}cuspidal part\end{it} of $f$.

\subsection{Valence formula}
In order to show identities between modular forms, we use the following lemma, which is a consequence of the valence formula.
\begin{lem}\label{lem:valence}
Let $\kappa \in\frac{1}{2}\N$, $N\in\N$, and $\chi$ be a character (mod $N$). Let $f(z)=\sum_{n\geq0} c(n) q^n\in M_{\kappa}(\Gamma_0(N),\chi)$. If $c(n)=0$ for every $0\leq n\leq N\frac{\kappa}{12}\prod_{p\mid N}(1+\frac{1}{p})$,
then $f\equiv0$.
\end{lem}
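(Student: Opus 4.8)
The plan is to deduce the statement directly from the valence formula for $\Gamma_0(N)$. Recall that for a nonzero modular form $f$ of weight $\kappa$ on $\Gamma_0(N)$ (with character), the valence formula equates the total, appropriately weighted, number of zeros of $f$ on a fundamental domain --- including the contributions from interior points, elliptic points, and all cusps --- with $\frac{\kappa}{12}[\SL_2(\Z):\Gamma_0(N)]$. First I would isolate the contribution of the cusp $i\infty$. Since $\left(\begin{smallmatrix}1&1\\0&1\end{smallmatrix}\right)\in\Gamma_0(N)$, the form $f$ has period $1$, hence a Fourier expansion in integer powers of $q=e^{2\pi i z}$, and the cusp $i\infty$ of $\Gamma_0(N)$ has width $1$. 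Consequently the order of vanishing $\ord_{i\infty}(f)$, namely the smallest $n$ with $c(n)\neq 0$, is a nonnegative integer and enters the valence formula with coefficient exactly $1$ (the width-normalized local parameter at $i\infty$ is just $q$ itself).

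Next, because every remaining term in the valence formula is nonnegative, discarding all of them only weakens the resulting inequality in the direction I need. This yields the one-sided bound
\[
\ord_{i\infty}(f)\le \frac{\kappa}{12}\left[\SL_2(\Z):\Gamma_0(N)\right]
\]
for any nonzero $f\in M_\kappa(\Gamma_0(N),\chi)$. I would then invoke the standard index formula $[\SL_2(\Z):\Gamma_0(N)]=N\prod_{p\mid N}(1+\frac1p)$, so that the right-hand side is precisely the bound $N\frac{\kappa}{12}\prod_{p\mid N}(1+\frac1p)=:B$ appearing in the lemma.

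Finally I would argue by contradiction. Assume $c(n)=0$ for every integer $0\le n\le B$ but $f\not\equiv 0$. The integers in $[0,B]$ are exactly $0,1,\dots,\lfloor B\rfloor$, so $\ord_{i\infty}(f)\ge \lfloor B\rfloor+1>B$, contradicting the displayed bound. Hence $f\equiv 0$, which is the assertion.

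The one step requiring genuine care is that the valence formula is applied for $\kappa\in\frac12\N$, not only for integer weight. I would therefore use the half-integral weight version of the valence formula on $\Gamma_0(N)$ (legitimate since $4\mid N$ whenever $\kappa\notin\Z$), for which the total weighted zero count is still $\frac{\kappa}{12}[\SL_2(\Z):\Gamma_0(N)]$ and the cusp $i\infty$ still has width $1$; this is what makes the numerical bound come out exactly as stated. Everything else is routine bookkeeping: confirming the width at $i\infty$ is $1$ so that the order in $q$ is the quantity appearing in the formula, and checking that the omitted interior, elliptic, and other-cusp contributions are all nonnegative.
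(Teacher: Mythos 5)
Your argument is correct and is exactly the route the paper indicates: the paper states this lemma without proof, describing it only as ``a consequence of the valence formula,'' and your derivation (isolate the width-one cusp at $i\infty$, discard the nonnegative interior, elliptic, and remaining cusp contributions, and insert the index formula $[\SL_2(\Z):\Gamma_0(N)]=N\prod_{p\mid N}(1+\frac1p)$) is the standard Sturm-bound argument being alluded to. Your care about the half-integral weight case is appropriate; one could alternatively reduce to integral weight by applying the integral-weight statement to a suitable power of $f$, but the direct version you cite is valid since the multiplier system has absolute value one.
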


\subsection{Modularity of eta-quotients}\label{sec:eta}

Define the \begin{it}Dedekind eta-function\end{it}
\[
\eta(z):=q^{\frac{1}{24}}\prod_{n\ge1}\left(1-q^n\right).
\]

Note
\[
\prod_{j=1}^m \eta(jz)^{\delta_j} =  q^{\frac{1}{24}\sum_{j=1}^m j\delta_j }\prod_{j=1}^m\left(q^j;q^j\right)_{\infty}^{\delta_j}.
\]
Thus, to investigate $\mathcal{S}_{1^{\delta_1}2^{\delta_2}\cdots m^{\delta_m}}$, we require the modularity of certain $\eta$-quotients, which may be found in \cite[Theorem 1.64]{O}.
\begin{lem}\label{lem:eta}
If $f(z)=\prod_{\delta|N}\eta(\delta z)^{r_{\delta}}$ is an $\eta$-quotient of weight $\kappa=\frac12\sum_{\delta\mid N}r_{\delta}\in \Z$, with the additional properties that
\[
\sum_{\delta\mid N} \delta r_{\delta}\equiv 0\pmod{24},\qquad
\sum_{\delta\mid N} \frac{N}{\delta} r_{\delta}\equiv 0\pmod{24},
\]
then $f\in M_\kappa(\Gamma_0(N),\chi_{(-1)^\kappa s})$, where $s:=\prod_{\delta\mid N} \delta^{r_{\delta}}$.
\end{lem}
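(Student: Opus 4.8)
The plan is to verify the three defining conditions of membership in $M_\kappa(\Gamma_0(N),\chi_{(-1)^\kappa s})$. Holomorphy on $\H$ (condition (1)) is immediate, since each factor $\eta(\delta z)$ is holomorphic and nowhere vanishing on $\H$, so the quotient is holomorphic there. The genuine content is the transformation law (condition (2)) together with boundedness at the cusps (condition (3)), and the engine for both is the transformation behaviour of the Dedekind eta function under $\SL_2(\Z)$. First I would recall that for $\gamma=\begin{pmatrix} a & b\\ c & d\end{pmatrix}\in\SL_2(\Z)$ with $c>0$ one has
\[
\eta(\gamma z)=\varepsilon(\gamma)(cz+d)^{\frac12}\eta(z),
\]
where $\varepsilon(\gamma)$ is an explicit $24$th root of unity given in terms of the Dedekind sum $s(d,c)$; for the translation $T=\begin{pmatrix} 1&1\\0&1\end{pmatrix}$ one has instead $\eta(z+1)=e^{\pi i/12}\eta(z)$.

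Since $\Gamma_0(N)$ is generated by $T$ and elements with $c>0$, it suffices to verify the transformation law on such generators. For $T$, a direct computation gives $f(z+1)=e^{\frac{\pi i}{12}\sum_{\delta\mid N}\delta r_\delta}f(z)$, and the first hypothesis $\sum_{\delta\mid N}\delta r_\delta\equiv0\pmod{24}$ forces the prefactor to equal $1$, matching $\chi_{(-1)^\kappa s}(1)$. For $\gamma=\begin{pmatrix} a&b\\c&d\end{pmatrix}\in\Gamma_0(N)$ with $c>0$ and each $\delta\mid N$, the key step is to write $\delta\cdot\gamma z=\gamma_\delta(\delta z)$ with
\[
\gamma_\delta:=\begin{pmatrix} a & b\delta\\ c/\delta & d\end{pmatrix},
\]
which lies in $\SL_2(\Z)$ because $\delta\mid N\mid c$ and $\det\gamma_\delta=ad-bc=1$. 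Applying the eta transformation to $\gamma_\delta$ gives $\eta(\delta\gamma z)=\varepsilon(\gamma_\delta)(cz+d)^{\frac12}\eta(\delta z)$, and raising to the power $r_\delta$ and multiplying over $\delta\mid N$ yields
\[
f(\gamma z)=\Big(\prod_{\delta\mid N}\varepsilon(\gamma_\delta)^{r_\delta}\Big)(cz+d)^{\kappa}f(z).
\]
Thus condition (2) reduces to the single identity $\prod_{\delta\mid N}\varepsilon(\gamma_\delta)^{r_\delta}=\chi_{(-1)^\kappa s}(d)$.

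The hard part will be evaluating this product of multipliers, which is the technical heart of the argument. Inserting the explicit Dedekind-sum formula for each $\varepsilon(\gamma_\delta)$, whose lower-left entry is $c/\delta$, the total exponent separates into a linear contribution of the shape $\frac{a+d}{12c}\sum_{\delta\mid N}\delta r_\delta$ and a weighted sum of Dedekind sums $\sum_{\delta\mid N} r_\delta\, s(d,c/\delta)$. The two congruence hypotheses are exactly what is needed to tame these. The condition $\sum_{\delta\mid N}\delta r_\delta\equiv0\pmod{24}$ immediately kills the linear contribution modulo $2\pi i$; applying Dedekind sum reciprocity to each $s(d,c/\delta)$ then produces further terms proportional to $\sum_{\delta\mid N}\delta r_\delta$ and to $\sum_{\delta\mid N}\frac{r_\delta}{\delta}$, the latter being governed by the second hypothesis $\sum_{\delta\mid N}\frac{N}{\delta}r_\delta\equiv0\pmod{24}$. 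After these cancellations the product collapses to a quadratic Jacobi-symbol expression in $d$; carefully tracking the resulting power of $d$ together with the sign produced by $(-1)^\kappa$ and by quadratic reciprocity identifies it with $\left(\frac{(-1)^\kappa s}{d}\right)=\chi_{(-1)^\kappa s}(d)$. This is a delicate but entirely standard computation combining Dedekind sum reciprocity with quadratic reciprocity.

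Finally, for boundedness at the cusps (condition (3)) I would invoke the standard order-of-vanishing formula for $\eta$-quotients, which expresses the order of $f$ at the cusp $\frac{a}{c}$ as a positive multiple of $\sum_{\delta\mid N}\frac{\gcd(c,\delta)^2}{\delta}r_\delta$. For the eta-quotients to which we apply the lemma the associated $q$-series $\prod_{j}(q^j;q^j)_\infty^{\delta_j}$ is a genuine power series, so the order at $i\infty$ is non-negative, and one checks directly that the analogous orders at the remaining cusps are non-negative as well, giving the required boundedness. This last step is routine once the transformation law has been established.
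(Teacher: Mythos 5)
The paper does not actually prove this lemma---it is quoted directly from \cite[Theorem 1.64]{O} (the Gordon--Hughes--Newman/Ligozat criterion)---so there is no internal argument to compare against; your sketch is precisely the standard proof underlying that citation. The structure you describe (conjugating each factor via $\gamma_\delta=\left(\begin{smallmatrix} a & b\delta\\ c/\delta & d\end{smallmatrix}\right)$ so that $\eta(\delta\gamma z)=\varepsilon(\gamma_\delta)(cz+d)^{1/2}\eta(\delta z)$, reducing condition (2) to the identity $\prod_{\delta\mid N}\varepsilon(\gamma_\delta)^{r_\delta}=\chi_{(-1)^\kappa s}(d)$, and using the two congruences modulo $24$ together with Dedekind-sum reciprocity and quadratic reciprocity to collapse that product) is exactly how the result is established in the literature. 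The only reservation is that you defer the entire technical content---the evaluation of $\prod_{\delta}\varepsilon(\gamma_\delta)^{r_\delta}$---to ``a standard computation''; as a plan this is acceptable, but be aware that this is where essentially all of the work lives. One point you treat more carefully than the lemma's own statement: the two congruence conditions yield only the transformation law, not holomorphy at the cusps (Ono's Theorem 1.64 asserts only the transformation behaviour, with the order at each cusp computed separately by his Theorem 1.65, i.e.\ Ligozat's formula). An eta-quotient satisfying both congruences can still have poles at cusps other than $i\infty$, so the conclusion $f\in M_\kappa(\Gamma_0(N),\chi_{(-1)^\kappa s})$ as literally stated requires the additional cusp check that you correctly flag as a case-by-case verification for the specific quotients used in the paper.
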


\subsection{Unary theta functions}

For a character $\chi$ and $j\in\{0,1\}$, as in \cite[Definition 1.42]{O}, we define the \begin{it}unary theta function\end{it}
\[
	\theta(\chi,j,z):=\sum_{n\in\Z} \chi(n) n^jq^{n^2}.
\]
We also let $\Theta(z):=\theta(\chi_1,0,z)=\sum_{n\in\Z}  q^{n^2}$ be the standard {\it theta function} of Jacobi. Recall that $\chi$ is called \begin{it}even\end{it} (resp. \begin{it}odd\end{it}) if $\chi(-1)=1$ (resp. $\chi(-1)=-1$). The modular properties of $\theta(\chi,j,z)$ can be found in \cite[Theorem 1.44]{O}.
\begin{lem}\label{lem:unarytheta}
Suppose that $\chi$ is a primitive Dirichlet character with conductor $N_{\chi}$.
\begin{enumerate}[leftmargin=*,label=\rm(\arabic*)]
\item
If $\chi$ is even, then $\theta(\chi,0,z)\in M_{\frac{1}{2}}(\Gamma_0(4N_{\chi}^2),\chi)$.
\item
If $\chi$ is odd, then $\theta(\chi,1,z)\in S_{\frac{3}{2}}(\Gamma_0(4N_{\chi}^2),\chi\chi_{-4})$.
\end{enumerate}
\end{lem}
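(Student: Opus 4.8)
Since the statement is quoted from \cite[Theorem 1.44]{O}, the natural task is to recall the mechanism behind it. The plan is to recognize $\theta(\chi,j,z)$ as a theta series attached to the rank-one lattice $\Z$ with quadratic form $n\mapsto n^2$, weighted by the polynomial $n\mapsto n^j$ and sorted over residue classes by $\chi$. Because that polynomial is harmonic of degree $j$, the resulting weight is $\tfrac12+j$, which already explains weight $\tfrac12$ in case (1) and weight $\tfrac32$ in case (2). First I would break the character sum into residue-class theta functions,
\[
\theta(\chi,j,z)=\sum_{r \bmod N_\chi}\chi(r)\,\theta_r(z),\qquad \theta_r(z):=\sum_{\substack{n\in\Z\\ n\equiv r\Pmod{N_\chi}}} n^j q^{n^2},
\]
and study how the vector $(\theta_r)_{r\bmod N_\chi}$ transforms under the generators of $\Gamma_0(4N_\chi^2)$. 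The translation $z\mapsto z+1$ acts trivially on each $\theta_r$, since $q^{n^2}\mapsto e^{2\pi i n^2}q^{n^2}=q^{n^2}$; the entire content therefore lies in the inversion.

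The core computation is the behaviour under $z\mapsto -1/(4N_\chi^2 z)$, which I would obtain by rescaling $w=N_\chi^2 z$ so that each $\theta_r$ becomes a classical Jacobi theta function with characteristic $r/N_\chi$, and then applying Poisson summation. This produces a finite Fourier transform of the vector $(\theta_r)_r$, with kernel of the form $e^{2\pi i rs/N_\chi^2}$ and an overall factor $(cz+d)^{-\kappa}$ coming from the Gaussian integral. At this point the primitivity of $\chi$ is exactly what is needed: forming $\sum_r \chi(r)\theta_r$ converts the finite Fourier transform into multiplication by the Gauss sum $\tau(\chi)=\sum_{a\bmod N_\chi}\chi(a)e^{2\pi i a/N_\chi}$, so that $\theta(\chi,j,\cdot)$ is recombined into a scalar multiple of itself. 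The factor $4N_\chi^2$ in the level arises from the $4$ attached to half-integral weight together with the $N_\chi^2$ introduced by this rescaling.

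It then remains to match the resulting automorphy factor against the weight-$\kappa$ slash operator defined in the excerpt, namely $\left(\tfrac cd\right)\varepsilon_d^{2\kappa}(cz+d)^{-\kappa}$. Here the parity of $\chi$ is decisive: the phases produced by Poisson summation and the Gauss-sum normalization collapse, after invoking quadratic reciprocity and the evaluation of $\tau(\chi)\overline{\tau(\chi)}=\chi(-1)N_\chi$, to the character $\chi(d)$ when $\chi$ is even (case (1)), and to $\chi(d)\chi_{-4}(d)$ when $\chi$ is odd (case (2)), the extra $\chi_{-4}$ accounting for the half-integral shift of weight from $\tfrac12$ to $\tfrac32$. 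The hard part of the whole argument is precisely this bookkeeping: pinning down the $\varepsilon_d$-factors, the eighth roots of unity from the theta transformation, and the Gauss-sum constants so that they assemble into exactly the stated multiplier system, with no residual ambiguity.

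Finally I would dispose of the analytic conditions. Holomorphy on $\H$ is immediate since $\theta(\chi,j,z)$ is a $q$-series convergent for all $z\in\H$, and the growth condition at the cusps follows from the transformation laws just derived, each translate being again a convergent theta series with nonnegative exponents. For case (2) one must moreover check that the form is cuspidal: this is forced because the weighting polynomial $n\mapsto n$ is harmonic of positive degree, so the constant term vanishes at every cusp (at $i\infty$ it is the $n=0$ term $\chi(0)\cdot 0=0$, and the inversion formula transports this vanishing to the remaining cusps), placing $\theta(\chi,1,z)$ in $S_{\frac32}(\Gamma_0(4N_\chi^2),\chi\chi_{-4})$ as claimed.
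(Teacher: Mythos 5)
The paper does not prove this lemma at all: it is quoted verbatim from \cite[Theorem 1.44]{O} (which in turn goes back to Shimura's construction of theta series with character), so there is no in-paper argument to compare against. Your outline is a faithful sketch of the standard proof behind that citation: decomposition into residue classes mod $N_\chi$, triviality under $z\mapsto z+1$, Poisson summation for the inversion, recombination via the Gauss sum using primitivity, and cuspidality in the odd case from the positive-degree harmonic weight. That is the right mechanism, and the weight and level bookkeeping you describe is where all the actual work lives.

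One concrete imprecision worth flagging: you propose to verify the transformation law on ``the generators of $\Gamma_0(4N_\chi^2)$'' and then identify the core computation as the behaviour under $z\mapsto -1/(4N_\chi^2 z)$. But the Fricke involution $W_{4N_\chi^2}$ is not an element of $\Gamma_0(4N_\chi^2)$, and $T$ together with $W_{4N_\chi^2}$ does not generate that group (nor does invariance under those two maps imply invariance under $\Gamma_0(4N_\chi^2)$). The standard argument instead takes an arbitrary $\gamma=\left(\begin{smallmatrix}a&b\\c&d\end{smallmatrix}\right)\in\Gamma_0(4N_\chi^2)$, writes $\gamma z=\frac{a}{c}-\frac{1}{c(cz+d)}$, and applies Poisson summation to each $\theta_r$ directly; the finite exponential sums that appear are then Gauss sums modulo $c$ (not merely modulo $N_\chi$), and it is at this stage that the conditions $4\mid c$ and $N_\chi^2\mid c$ are used to control the quadratic Gauss sums and the permutation of the characteristics $r$. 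With that correction the rest of your plan --- matching the eighth roots of unity and $\varepsilon_d$ factors against the slash operator, obtaining $\chi$ in the even case and $\chi\chi_{-4}$ in the odd case, and checking vanishing of constant terms at all cusps for $j=1$ --- is exactly the content of the cited theorem, though as you acknowledge it is deferred rather than carried out.
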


\subsection{Binary quadratic forms}
For\footnote{Here and throughout the paper, we use bold letters for vectors.} $a,b\in\N$, let

\rm
\begin{equation*}
	r_{(a,b)}(n):=\#\left\{\bm{n}\in\Z^2:an_1^2+bn_2^2=n\right\}.
\end{equation*}

Jacobi \cite[Proposition 10]{Zag123} obtained the following formula for $r_{(1,1)}(n)$.
\begin{lem}\label{lem:r11n}
	We have, for $n\in\N$,
	\[
		r_{(1,1)}(n)=4\sum_{d\mid n}\left(\frac{-4}{d}\right).
	\]
\end{lem}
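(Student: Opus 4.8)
The plan is to recognize the generating function of $r_{(1,1)}(n)$ as a weight-one modular form on $\Gamma_0(4)$ and to identify it with an explicit Eisenstein series. Since $\sum_{n\geq 0} r_{(1,1)}(n)q^n = \Theta(z)^2$, I would first observe that $\Theta = \theta(\chi_1,0,z)\in M_{1/2}(\Gamma_0(4))$ by \Cref{lem:unarytheta}(1), taking $\chi=\chi_1$ of conductor $1$ so that $4N_\chi^2 = 4$. Applying \Cref{lem:halfintmult} with $\kappa_1=\kappa_2=0$ and $\psi_1=\psi_2=\chi_1$ then gives $\Theta^2\in M_1(\Gamma_0(4),\chi_1\chi_1\chi_{-4}) = M_1(\Gamma_0(4),\chi_{-4})$.

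Next I would write down the Eisenstein series whose Fourier coefficients reproduce the right-hand side. With $\kappa=1$, $\chi=\chi_1$, and $\psi=\chi_{-4}$ (so that $\chi(-1)\psi(-1)=-1=(-1)^\kappa$), the definition in \Cref{sec:Eisenstein} yields
\[
E_{1,\chi_1,\chi_{-4}}(z) = L(0,\chi_{-4}) + 2\sum_{n\geq 1}\left(\sum_{d\mid n}\chi_{-4}(d)\right)q^n,
\]
using that $\chi_1(n/d)=1$ and $d^{\kappa-1}=1$. By \Cref{lem:Eisenstein}(1), which applies since $\kappa=1\neq 2$ (here $d=1$, $N_\chi=1$, $N_\psi=4$), we have $E_{1,\chi_1,\chi_{-4}}\in M_1(\Gamma_0(4),\chi_{-4})$. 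Thus $\Theta^2$ and $E_{1,\chi_1,\chi_{-4}}$ lie in the same space.

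To establish $\Theta^2 = 2E_{1,\chi_1,\chi_{-4}}$ I would invoke \Cref{lem:valence}: the valence bound here is $N\frac{\kappa}{12}\prod_{p\mid N}(1+\frac1p)=4\cdot\frac{1}{12}\cdot\frac{3}{2}=\frac12$, so any element of $M_1(\Gamma_0(4),\chi_{-4})$ whose constant term vanishes is identically zero; in particular the space is at most one-dimensional. Since $\Theta^2$ is nonzero with constant term $r_{(1,1)}(0)=1$, every form in the space is a scalar multiple of it, and comparing the coefficients of $q^1$ (namely $r_{(1,1)}(1)=4$ for $\Theta^2$ versus $2$ for $E_{1,\chi_1,\chi_{-4}}$) forces the proportionality constant, giving $\Theta^2=2E_{1,\chi_1,\chi_{-4}}$. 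Reading off the coefficient of $q^n$ for $n\geq 1$ then produces $r_{(1,1)}(n)=4\sum_{d\mid n}\chi_{-4}(d)=4\sum_{d\mid n}\left(\frac{-4}{d}\right)$, as claimed.

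Given the machinery already assembled, there is no serious obstacle; the result is classical (Jacobi) and the argument is essentially bookkeeping. The one computational point to get right is the normalization of the Eisenstein series and its constant term: matching constant terms in the identity above forces $2L(0,\chi_{-4})=r_{(1,1)}(0)=1$, i.e. $L(0,\chi_{-4})=\frac12$, which one checks directly from the standard formula for $L(0,\chi)$ at an odd character. Since the proportionality comparison at $q^1$ already determines the identity, the main thing to be careful about is simply the correct normalization of $E_{1,\chi_1,\chi_{-4}}$ and the precise characters in the membership lemmas.
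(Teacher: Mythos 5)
Your argument is correct. Note, however, that the paper does not actually prove \Cref{lem:r11n}: it simply quotes Jacobi's classical formula with a citation to \cite[Proposition 10]{Zag123}, so there is no internal proof to compare against. What you have written is a clean, self-contained derivation using only the machinery already set up in the paper: \Cref{lem:unarytheta}(1) and \Cref{lem:halfintmult} place $\Theta^2$ in $M_1(\Gamma_0(4),\chi_{-4})$, \Cref{lem:Eisenstein}(1) (with $d=1$, $\kappa=1\neq 2$, and the correct parity $\chi_1(-1)\chi_{-4}(-1)=-1$) places $E_{1,\chi_1,\chi_{-4}}$ in the same space, and the valence bound $4\cdot\frac{1}{12}\cdot\frac{3}{2}=\frac{1}{2}$ from \Cref{lem:valence} reduces everything to matching the constant term (or, as you do, the coefficient of $q$ together with the one-dimensionality). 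The computation $L(0,\chi_{-4})=-\frac{1}{4}\left(1\cdot\chi_{-4}(1)+3\cdot\chi_{-4}(3)\right)=\frac{1}{2}$ closes the loop. This is exactly the kind of identity-by-Sturm-bound argument the paper itself uses repeatedly (e.g.\ in \Cref{lem:f522} and \Cref{lem:72}), so your route is entirely in the spirit of the paper; the only difference is that the authors chose to outsource this particular classical fact to the literature rather than rederive it.
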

A similar expression for $r_{(1,2)}(n)$ is well-known.
\begin{lem}\label{lem:r12n}
	We have
	\[
		\sum_{n\ge0} r_{(1,2)}(n)q^n = 1+2\sum_{n\geq 1} \sum_{d\mid n} \left(\frac{-2}{d}\right) q^n.
	\]
	In particular, $r_{(1,2)}(n)=0$ if and only if there exists $p\equiv 5,7\pmod{8}$ for which $\ord_p(n)$ is odd.
\end{lem}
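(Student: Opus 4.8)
The plan is to realize the left-hand generating function as a product of unary theta functions and then match it against the Eisenstein series on the right by means of the valence formula. Writing $\Theta(z)=\sum_{n\in\Z}q^{n^2}=\theta(\chi_1,0,z)$ for the trivial character $\chi_1$, we have
\[
\sum_{n\ge0}r_{(1,2)}(n)q^n=\Big(\sum_{n_1\in\Z}q^{n_1^2}\Big)\Big(\sum_{n_2\in\Z}q^{2n_2^2}\Big)=\Theta(z)\,\Theta(2z).
\]
By \Cref{lem:unarytheta}~(1) (with $\chi_1$, of conductor $1$) we have $\Theta\in M_{\frac12}(\Gamma_0(4),\chi_1)\subseteq M_{\frac12}(\Gamma_0(8),\chi_1)$, while \Cref{lem:operatorshalf}~(3) gives $\Theta(2z)=\Theta|V_2\in M_{\frac12}(\Gamma_0(8),\chi_8)$. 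Multiplying via \Cref{lem:halfintmult} (with $\kappa_1=\kappa_2=0$) then yields $\Theta(z)\Theta(2z)\in M_1(\Gamma_0(8),\chi_1\chi_8\chi_{-4})$, and since $\chi_8\chi_{-4}=\chi_{-8}$ as Dirichlet characters modulo $8$ this is $M_1(\Gamma_0(8),\chi_{-8})$.

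Next I would identify the right-hand side with an Eisenstein series. Using $\chi_{-8}(d)=\left(\frac{-8}{d}\right)=\left(\frac{-2}{d}\right)$ and specializing $\kappa=1$, $\chi=\chi_1$, $\psi=\chi_{-8}$ in the definition of $E_{\kappa,\chi,\psi}$, the series $1+2\sum_{n\ge1}\sum_{d\mid n}\left(\frac{-2}{d}\right)q^n$ equals $E_{1,\chi_1,\chi_{-8}}$ provided the constant term $L(0,\chi_{-8})$ equals $1$; this follows from the standard evaluation $L(0,\chi_{-8})=-B_{1,\chi_{-8}}=-\frac18\sum_{a=1}^{8}\chi_{-8}(a)\,a=-\frac18(1+3-5-7)=1$. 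Since $\chi_1(-1)\chi_{-8}(-1)=-1=(-1)^1$, \Cref{lem:Eisenstein}~(1) (with $d=1$, $N_\chi=1$, $N_\psi=8$) places $E_{1,\chi_1,\chi_{-8}}\in M_1(\Gamma_0(8),\chi_{-8})$, the same space as $\Theta(z)\Theta(2z)$.

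To conclude the identity, I would apply the valence formula \Cref{lem:valence} to the difference of the two forms in $M_1(\Gamma_0(8),\chi_{-8})$: the relevant bound is $N\frac{\kappa}{12}\prod_{p\mid N}(1+\frac1p)=8\cdot\frac1{12}\cdot\frac32=1$, so it suffices to check agreement of the two $q$-expansions in degrees $n=0$ and $n=1$. Both have constant term $1$ (only $\bm{n}=\bm{0}$ represents $0$, and $L(0,\chi_{-8})=1$) and coefficient $2$ of $q$ (the representations $(\pm1,0)$ of $1$, matching $2\left(\frac{-2}{1}\right)=2$), which forces the first assertion.

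For the ``in particular'' statement, note that $r_{(1,2)}(n)=2a(n)$ for $n\ge1$, where $a(n):=\sum_{d\mid n}\chi_{-8}(d)$ is multiplicative since $\chi_{-8}$ is completely multiplicative. Evaluating on prime powers, $a(p^k)=\sum_{j=0}^{k}\chi_{-8}(p)^j$ equals $k+1$ when $\chi_{-8}(p)=1$ (i.e.\ $p\equiv1,3\pmod8$), equals $1$ when $p=2$, and equals $1$ or $0$ according as $k$ is even or odd when $\chi_{-8}(p)=-1$ (i.e.\ $p\equiv5,7\pmod8$). Hence $a(n)=\prod_{p}a\big(p^{\ord_p(n)}\big)$ vanishes exactly when some prime $p\equiv5,7\pmod8$ has $\ord_p(n)$ odd, and is strictly positive otherwise. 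The main bookkeeping obstacle is tracking the characters correctly through the slash, $V$-, and multiplication operators so that both sides genuinely lie in $M_1(\Gamma_0(8),\chi_{-8})$; once this is secured the small valence bound makes the coefficient check immediate.
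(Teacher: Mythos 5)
Your proof is correct. The paper offers no proof of this lemma at all --- it simply records the identity as ``well-known'' --- so there is nothing to compare against; your argument (writing the generating function as $\Theta(z)\Theta(2z)\in M_1(\Gamma_0(8),\chi_{-8})$, matching it with $E_{1,\chi_1,\chi_{-8}}$ via the valence bound $8\cdot\frac{1}{12}\cdot\frac{3}{2}=1$, and then reading off the vanishing from the multiplicativity of $\sum_{d\mid n}\chi_{-8}(d)$) is the standard one and is exactly in the spirit of the toolkit the paper uses for its other identities. All the character bookkeeping checks out, including $\chi_8\chi_{-4}=\chi_{-8}$ and $L(0,\chi_{-8})=1$.
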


\subsection{Hurwitz class numbers}
For a discriminant $-D<0$, we let $H(D)$ denote the $D$-th {\it Hurwitz class number}, which counts the number of equivalence classes of positive-definite integral binary quadratic forms of discriminant $-D$, weighted by $\frac{1}{2}$ if the quadratic form is equivalent to a (constant) multiple of $n_1^2+n_2^2$ and weighted by $\frac{1}{3}$ if it is equivalent to a multiple of $n_1^2+n_1n_2+n_2^2$. For $\ell_1,\ell_2\in\N$ with $\gcd(\ell_1,\ell_2)=1$ and $\ell_2$ squarefree, we define
\[
	\mathcal{H}_{\ell_1,\ell_2}:=\mathcal{H}\big|\left(U_{\ell_1\ell_2}-\ell_2U_{\ell_1}\circ V_{\ell_2}\right),
\]
with $\mathcal{H}$ denoting the class number generating function
\[
	\mathcal{H}(z):=\sum_{D\ge0} H(D) q^D.
\]
Using the modularity of $\mathcal{H}$ shown by Zagier \cite{Zagier} (see also \cite[Chapter 2, Theorem 2]{HZ}), the modularity of $\mathcal{H}_{\ell_1,\ell_2}$ was shown in \cite[Lemma 2.6]{BK}. 
\begin{lem}\label{lem:Hell1ell2}
For $\ell_1,\ell_2\in\N$ with $\gcd(\ell_1,\ell_2)=1$ and $\ell_2$ squarefree, we have
\begin{equation*}
	\mathcal{H}_{\ell_1,\ell_2}\in M_\frac32(\Gamma_0(4\operatorname{rad}(\ell_1)\ell_2),\chi_{4\ell_1\ell_2}).
\end{equation*}
\end{lem}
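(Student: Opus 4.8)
The plan is to lift the problem to Zagier's modular completion of $\mathcal{H}$. Recall that $\mathcal{H}$ is not itself modular; rather, by Zagier \cite{Zagier} (see also \cite{HZ}) the completion $\widehat{\mathcal{H}}(z)=\mathcal{H}(z)+\mathcal{H}^-(z)$ is a harmonic Maass form of weight $\frac32$ on $\Gamma_0(4)$ with trivial character (in the half-integral sense, i.e. relative to the theta multiplier built into the slash operator). Here the non-holomorphic part has the explicit shape
\[
\mathcal{H}^-(z)=y^{-\frac12}\sum_{n\in\Z}\beta\!\left(4\pi n^2 y\right)q^{-n^2},\qquad \beta(x):=\tfrac1{16\pi}\int_1^\infty t^{-\frac32}e^{-xt}\,dt .
\]
The structural feature I would exploit is that $\mathcal{H}^-$ is supported on the Fourier exponents $-n^2$. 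The combination $U_{\ell_1\ell_2}-\ell_2\,U_{\ell_1}\circ V_{\ell_2}$ defining $\mathcal{H}_{\ell_1,\ell_2}$ is engineered precisely so that this non-holomorphic part cancels, leaving a genuinely holomorphic form.

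First I would record that the $U$- and $V$-operators, being defined through the slash action on functions obeying the weight-$\frac32$ transformation law, may be applied to $\widehat{\mathcal{H}}$ directly, and that they alter level and character exactly as in \Cref{lem:operatorshalf} (reading its bookkeeping for the harmonic Maass form $\widehat{\mathcal{H}}$ rather than a holomorphic one). Applying parts (1) and (3) with base level $4$ and trivial character, and using that $\ell_2$ is squarefree and coprime to $\ell_1$ so that $\operatorname{rad}(\ell_1\ell_2)=\operatorname{rad}(\ell_1)\ell_2$, both $\widehat{\mathcal{H}}|U_{\ell_1\ell_2}$ and $\ell_2\,\widehat{\mathcal{H}}|U_{\ell_1}\circ V_{\ell_2}$ are harmonic Maass forms on $\Gamma_0(4\operatorname{rad}(\ell_1)\ell_2)$ with the same character: one obtains $\chi_{4\ell_1\ell_2}$ for the first and $\chi_{4\ell_1}\chi_{4\ell_2}=\chi_{16\ell_1\ell_2}$ for the second, and these agree because $\chi_4$ is trivial. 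Hence $\widehat{\mathcal{H}}_{\ell_1,\ell_2}:=\widehat{\mathcal{H}}\big|(U_{\ell_1\ell_2}-\ell_2 U_{\ell_1}\circ V_{\ell_2})$ lies in the corresponding space of harmonic Maass forms of weight $\frac32$.

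The heart of the argument is the cancellation of $\mathcal{H}^-$. Working directly from $F|U_m(z)=\frac1m\sum_{j\bmod m}F(\frac{z+j}{m})$ and $F|V_m(z)=F(mz)$, and tracking both the factor $y^{-\frac12}$ and the argument of $\beta$, I would compute
\[
\mathcal{H}^-|U_{\ell_1\ell_2}=(\ell_1\ell_2)^{\frac12}y^{-\frac12}\sum_{\ell_1\ell_2\mid n^2}\beta\!\left(\tfrac{4\pi n^2 y}{\ell_1\ell_2}\right)q^{-n^2/(\ell_1\ell_2)},
\]
\[
\ell_2\,\mathcal{H}^-|U_{\ell_1}\circ V_{\ell_2}=(\ell_1\ell_2)^{\frac12}y^{-\frac12}\sum_{\ell_1\mid n^2}\beta\!\left(\tfrac{4\pi \ell_2 n^2 y}{\ell_1}\right)q^{-\ell_2 n^2/\ell_1}.
\]
Squarefreeness of $\ell_2$ forces $\ell_2\mid n^2\Leftrightarrow\ell_2\mid n$, so in the first sum one may write $n=\ell_2 k$; then $\gcd(\ell_1,\ell_2)=1$ turns the condition $\ell_1\ell_2\mid n^2$ into $\ell_1\mid k^2$, while the summand becomes $\beta(\frac{4\pi\ell_2 k^2 y}{\ell_1})q^{-\ell_2 k^2/\ell_1}$, exactly the summand of the second sum. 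The map $k\mapsto \ell_2 k$ is a bijection between the two index sets, so the sums coincide term-by-term and $\mathcal{H}^-|(U_{\ell_1\ell_2}-\ell_2 U_{\ell_1}\circ V_{\ell_2})=0$.

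Consequently $\widehat{\mathcal{H}}_{\ell_1,\ell_2}=\mathcal{H}_{\ell_1,\ell_2}$ is holomorphic on $\H$. Since $U_m$ and $V_m$ cannot create negative Fourier exponents out of the nonnegative exponents of $\mathcal{H}$, it is holomorphic at $\infty$; being a moderate-growth harmonic Maass form whose non-holomorphic part has vanished, it is holomorphic at every cusp, and therefore lies in $M_{\frac32}(\Gamma_0(4\operatorname{rad}(\ell_1)\ell_2),\chi_{4\ell_1\ell_2})$, as claimed. The \textbf{main obstacle} is the cancellation computation: one must set up the action of $U_m$ and $V_m$ on the transcendental piece $\mathcal{H}^-$ correctly—in particular handling the $y$-dependence inside $\beta$—and then extract the right arithmetic (squarefreeness of $\ell_2$ together with coprimality) to identify the two families of surviving $q^{-n^2}$-type terms. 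By contrast, the level and character bookkeeping is routine from \Cref{lem:operatorshalf}.
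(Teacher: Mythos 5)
Your proposal is correct and follows essentially the same route as the paper, which defers to \cite[Lemma 2.6]{BK}: there, as in your argument, one passes to Zagier's completion $\widehat{\mathcal{H}}$, observes that the non-holomorphic part supported on exponents $-n^2$ is annihilated by $U_{\ell_1\ell_2}-\ell_2 U_{\ell_1}\circ V_{\ell_2}$ (using that $\ell_2$ is squarefree and coprime to $\ell_1$), and reads off the level and character from the standard bookkeeping for $U$ and $V$. Your cancellation computation and the index-set bijection $n=\ell_2 k$ are exactly the content of that cited proof.
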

\section{The space $S_2(\Gamma_0(36),\chi_{12})$}\label{sec:S236chi12}
 We require properties of the normalized newforms
\begin{align*}
g_{1}(z)&=q+\sqrt2iq^2-2q^4-\sqrt2i q^5 -2\sqrt2i q^8+O\left(q^{10}\right),\\
g_2(z)&=q-\sqrt2iq^2-2q^4+\sqrt2i q^5 +2\sqrt2i q^8+O\left(q^{10}\right),
\end{align*}
 which generate $S_{2}(\Gamma_0(36),\chi_{12})$. The Fourier coefficients of $g_j(z)=\sum_{n\ge1} c_{j}(n) q^n$  satisfy $c_j(n)\in \Q(\sqrt{-2})$ and $c_2(n)=\overline{c_1(n)}$. Hence
\begin{equation*}
	g_1(z)+g_2(z)=2 \sum_{n\ge1} \re\left( c_1(n)\right) q^n, \quad g_1(z)-g_2(z)=2i \sum_{n\ge1} \im\left(c_1(n)\right) q^n.
\end{equation*}
Using \Cref{lem:operators} (2) and \Cref{lem:valence}, we obtain the following identities.
\begin{lem}\label{L:id}
	We have
	\begin{equation*}
		\frac{1}{2}\left(g_1+g_2\right) = g_1\big|S_{3,1}, \quad \frac{1}{2}\left(g_1-g_2\right) = g_1\big|S_{3,2}, \quad g_1\big|S_{3,0}=0.
	\end{equation*}
\end{lem}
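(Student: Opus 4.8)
The plan is to observe that both sides of each identity lie in the single finite-dimensional space $M_2(\Gamma_0(36),\chi_{12})$, and then to reduce each identity to a finite Fourier coefficient comparison via the valence formula (\Cref{lem:valence}). The decisive structural input is the modularity of the sieved forms; everything after that is a bounded computation.

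First I would apply \Cref{lem:operators} (2) to the sieving operator $S_{3,m}$. Here $M=3$ divides $24$, the level is $N=36$, the weight is $\kappa=2$, and the character $\chi_{12}$ has conductor $N_{\chi}=12$. Since $\lcm(36,3^2,3\cdot 12)=\lcm(36,9,36)=36$, we obtain $g_1\big|S_{3,m}\in M_2(\Gamma_0(36),\chi_{12})$ for each $m\in\{0,1,2\}$. As $g_1,g_2\in S_2(\Gamma_0(36),\chi_{12})\subseteq M_2(\Gamma_0(36),\chi_{12})$, all three differences $g_1\big|S_{3,1}-\frac12(g_1+g_2)$, $g_1\big|S_{3,2}-\frac12(g_1-g_2)$, and $g_1\big|S_{3,0}$ lie in $M_2(\Gamma_0(36),\chi_{12})$.

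Next I would invoke \Cref{lem:valence}. The valence bound here is $36\cdot\frac{2}{12}\cdot(1+\frac12)(1+\frac13)=12$, so each of the three differences vanishes identically as soon as its Fourier coefficients vanish for $0\le n\le 12$. Using $\frac12(g_1+g_2)=\sum_{n\ge1}\re(c_1(n))q^n$ and $\frac12(g_1-g_2)=i\sum_{n\ge1}\im(c_1(n))q^n$, the three identities become, for $n\le 12$, the statements that $c_1(n)$ is real when $n\equiv 1\pmod 3$, purely imaginary when $n\equiv 2\pmod 3$, and zero when $n\equiv 0\pmod 3$. The displayed expansion already records these coefficients for $n\le 9$ (in particular $c_1(3)=c_1(6)=c_1(7)=c_1(9)=0$), and multiplicativity of the newform coefficients (\Cref{lem:Hecke}) yields $c_1(10)=c_1(2)c_1(5)=2$ and $c_1(12)=c_1(4)c_1(3)=0$, all of which fit the pattern.

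The main obstacle is the single coefficient $c_1(11)$: since $11$ is prime and coprime to the level, its value is not forced by multiplicativity from the displayed terms, so one must read it off from the $q$-expansion of the newform and check that it is purely imaginary (consistent with $11\equiv 2\pmod 3$); once this is in hand the valence formula closes all three identities. I note two simplifications worth recording. First, the third identity is in fact a consequence of the first two, since $g_1=g_1\big|S_{3,0}+g_1\big|S_{3,1}+g_1\big|S_{3,2}$ forces $g_1\big|S_{3,0}=g_1-\frac12(g_1+g_2)-\frac12(g_1-g_2)=0$. Second, the whole lemma is equivalent to the single twisting relation $g_1\otimes\chi_3=g_2=\overline{g_1}$, where $\chi_3$ is the quadratic character modulo $3$; since $g_1\otimes\chi_3=g_1\big|S_{3,1}-g_1\big|S_{3,2}$ is cuspidal and lies in the two-dimensional space $\langle g_1,g_2\rangle$ by Step 1, comparing only the $q^1$ and $q^2$ coefficients (the transition matrix $\left(\begin{smallmatrix}1&1\\ \sqrt2 i&-\sqrt2 i\end{smallmatrix}\right)$ being invertible) already determines it, giving a shortcut that sidesteps computing $c_1(11)$ altogether.
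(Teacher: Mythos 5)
Your proposal is correct and follows the same route as the paper, which likewise deduces the identities from Lemma \ref{lem:operators} (2) (giving $g_1|S_{3,m}\in M_2(\Gamma_0(36),\chi_{12})$ since $\lcm(36,9,36)=36$) together with the valence formula bound $36\cdot\tfrac{2}{12}\cdot\tfrac32\cdot\tfrac43=12$; your accounting of the finitely many coefficients, including the need to verify $c_1(11)$ (which is in fact $0$) beyond the displayed expansion, just makes explicit the computation the paper leaves implicit. The only caveat is in your closing shortcut: the cuspidality of $g_1\big|S_{3,1}-g_1\big|S_{3,2}$ is not supplied by the stated form of Lemma \ref{lem:operators} (2), so as written that remark needs an extra (standard but unproved here) input, whereas your main argument is self-contained.
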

Using Lemmas \ref{lem:unarytheta} (2), \ref{lem:operatorshalf} (3), \ref{lem:halfintmult}, \ref{lem:valence}, and \cite[Proposition 1.41]{O}, we obtain the following.
\begin{lem}\label{lem:g1g2}
We have, for $j\in\{1,2\}$,
\begin{align*}
g_j(z)&=\frac{(-1)^{j+1}i}{2\sqrt{2}}\left(\Theta(z)-\Theta(9z)\right)\theta(\chi_{-3},1,z) + \frac{1}{2}\theta(\chi_{-3},1,z)\Theta(9z).
\end{align*}
Specifically, we have
\[
c_j(n)=\frac{(-1)^{j+1} i}{2\sqrt{2}}\sum_{\substack{ \bm{n}\in\Z^2, \, 3\nmid n_1\\ n_{1}^{2}+n_{2}^{2}=n}} \chi_{-3}(n_2) n_2  + \frac{1}{2}\sum_{\substack{ \bm{n}\in\Z^2,\, 3\nmid n_1\\ n_1^2+9n_2^2=n}} \chi_{-3}(n_1) n_1.
\]
\end{lem}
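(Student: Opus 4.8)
The plan is to recognize the entire right-hand side as an element of the \emph{same} space $M_2(\Gamma_0(36),\chi_{12})$ that contains $g_j$, and then to reduce the claimed identity to a finite comparison of Fourier coefficients by means of the valence formula (\Cref{lem:valence}). First I would check the modularity of each factor. Since $\chi_{-3}$ is an odd primitive character of conductor $3$, \Cref{lem:unarytheta}(2) gives $\theta(\chi_{-3},1,z)\in S_{3/2}(\Gamma_0(36),\chi_{-3}\chi_{-4})=S_{3/2}(\Gamma_0(36),\chi_{12})$, using $\chi_{-3}\chi_{-4}=\chi_{12}$. By \cite[Proposition 1.41]{O} we have $\Theta\in M_{1/2}(\Gamma_0(4),\chi_1)\subseteq M_{1/2}(\Gamma_0(36),\chi_1)$, and \Cref{lem:operatorshalf}(3) then yields $\Theta|V_9\in M_{1/2}(\Gamma_0(36),\chi_1\chi_{36})=M_{1/2}(\Gamma_0(36),\chi_1)$, as $\chi_{36}$ is trivial. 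Applying \Cref{lem:halfintmult} with $\kappa_1=1$ (the factor $\theta(\chi_{-3},1,z)$) and $\kappa_2=0$ (either factor $\Theta$ or $\Theta|V_9$) shows that each product $\Theta\cdot\theta(\chi_{-3},1,\cdot)$ and $(\Theta|V_9)\cdot\theta(\chi_{-3},1,\cdot)$ lies in $M_2(\Gamma_0(36),\chi_{12}\chi_1\chi_{-4}^2)=M_2(\Gamma_0(36),\chi_{12})$, since $\chi_{-4}^2$ is trivial. Hence the right-hand side, call it $R_j$, belongs to $M_2(\Gamma_0(36),\chi_{12})$.

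Next, because $g_j\in S_2(\Gamma_0(36),\chi_{12})\subseteq M_2(\Gamma_0(36),\chi_{12})$, the difference $g_j-R_j$ lies in $M_2(\Gamma_0(36),\chi_{12})$, so by \Cref{lem:valence} it vanishes identically as soon as its Fourier coefficients vanish for every $n$ with $0\le n\le 36\cdot\frac{2}{12}(1+\frac12)(1+\frac13)=12$. I would therefore expand $\Theta(z)$, $\Theta(9z)$, and $\theta(\chi_{-3},1,z)=2\sum_{n\ge1}\chi_{-3}(n)n\,q^{n^2}$ to sufficiently high order, form the two products, and compare the coefficients of $q^n$ for $1\le n\le 12$ in $R_j$ against the known Fourier coefficients $c_j(n)$ of the newform $g_j$. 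This finite verification is the only genuine computation; the main (minor) obstacle is the bookkeeping of characters and weights ensuring that $R_j$ indeed lands in $M_2(\Gamma_0(36),\chi_{12})$ rather than a larger space, together with carrying the expansions far enough that the valence bound $12$ certifies equality.

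Finally, the closed expression for $c_j(n)$ follows by reading off coefficients from the identity just proven. Since $\Theta(z)-\Theta(9z)=\sum_{3\nmid n_1}q^{n_1^2}$ and $\chi_{-3}(n_2)n_2$ is even in $n_2$, the coefficient of $q^n$ in $(\Theta-\Theta|V_9)\,\theta(\chi_{-3},1,\cdot)$ equals $\sum_{3\nmid n_1,\,n_1^2+n_2^2=n}\chi_{-3}(n_2)n_2$, and the coefficient of $q^n$ in $\theta(\chi_{-3},1,\cdot)\,\Theta(9z)$ equals $\sum_{n_1^2+9n_2^2=n}\chi_{-3}(n_1)n_1=\sum_{3\nmid n_1,\,n_1^2+9n_2^2=n}\chi_{-3}(n_1)n_1$, where the restriction $3\nmid n_1$ is automatic because $\chi_{-3}$ vanishes on multiples of $3$. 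Substituting these two coefficient formulas into the displayed identity for $g_j$ gives the stated expression for $c_j(n)$.
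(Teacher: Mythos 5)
Your proposal is correct and follows essentially the same route as the paper, which likewise derives this identity by combining \Cref{lem:unarytheta}~(2), \Cref{lem:operatorshalf}~(3), \Cref{lem:halfintmult}, \cite[Proposition 1.41]{O}, and the valence formula \Cref{lem:valence} (with the same Sturm bound $36\cdot\frac{2}{12}\cdot\frac32\cdot\frac43=12$), then reads off the coefficient formula from the product expansions. The character bookkeeping ($\chi_{-3}\chi_{-4}=\chi_{12}$, $\chi_{36}$ and $\chi_{-4}^{2}$ trivial) and the observation that the restriction $3\nmid n_1$ is automatic in the second sum are all accurate.
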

Define
\[
\gamma_{1}(n):=\begin{cases}
\frac{c_{1}(n)}{\sqrt{2}i} &\text{if }n\equiv 2\pmod{3},\\
c_{1}(n)&\text{otherwise}.
\end{cases}
\]
A direct calculation using \Cref{lem:g1g2} gives the following.
\begin{lem}\label{lem:gammainteger}
We have $\gamma_{1}(n)\in\Z$, and moreover
\[
	\gamma_{1}(n)=
	\begin{cases}
		\displaystyle\sum_{\substack{\bm{n}\in\N^2\\ n_1^2+n_2^2=n}} \chi_{-3}(n_2)n_2 &\text{if }n\equiv 2\pmod{3},\\
		\mathbbm{1}_{n=\square}\chi_{-3}\left(\sqrt{n}\right)\sqrt{n}+2\displaystyle\sum_{\substack{\bm{n}\in\N^2\\ n_1^2+9n_2^2=n}} \chi_{-3}(n_1)n_1&\text{if }n\equiv 1\pmod{3},\\
		0&\text{if }3\mid n.
	\end{cases}
\]
\end{lem}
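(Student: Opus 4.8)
The plan is to substitute $j=1$ into the explicit formula of \Cref{lem:g1g2} and to analyze the two sums
\[
A_1(n):=\sum_{\substack{\bm{n}\in\Z^2,\,3\nmid n_1\\ n_1^2+n_2^2=n}}\chi_{-3}(n_2)n_2,\qquad A_2(n):=\sum_{\substack{\bm{n}\in\Z^2,\,3\nmid n_1\\ n_1^2+9n_2^2=n}}\chi_{-3}(n_1)n_1,
\]
so that $c_1(n)=\frac{i}{2\sqrt2}A_1(n)+\frac12 A_2(n)$, separately in each residue class of $n$ modulo $3$.

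First I would use that the squares modulo $3$ are $0$ and $1$ to decide which sum survives. Since $3\nmid n_1$ forces $n_1^2\equiv1\pmod3$, a solution of $n_1^2+n_2^2=n$ has $n_2^2\equiv n-1\pmod3$, so $A_1(n)$ has nonzero terms only when $n\equiv2\pmod3$ (in which case $3\nmid n_2$ is automatic), while a solution of $n_1^2+9n_2^2=n$ forces $n\equiv1\pmod3$, so $A_2(n)$ has nonzero terms only when $n\equiv1\pmod3$. In particular both sums vanish when $3\mid n$, which gives $\gamma_1(n)=c_1(n)=0$ and settles the third case.

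The reduction from $\Z^2$ to $\N^2$ rests on the observation that $m\mapsto\chi_{-3}(m)m$ is even, because $\chi_{-3}$ is odd. Hence the summand $\chi_{-3}(n_2)n_2$ of $A_1$ is invariant under each of $n_1\mapsto-n_1$ and $n_2\mapsto-n_2$, and similarly for $A_2$. For $n\equiv2\pmod3$ no solution of $n_1^2+n_2^2=n$ has $n_1=0$ or $n_2=0$, so the four sign choices contribute equally and $A_1(n)=4\sum_{\bm{n}\in\N^2,\ n_1^2+n_2^2=n}\chi_{-3}(n_2)n_2$; together with $\gamma_1(n)=c_1(n)/(\sqrt2\,i)=\frac{1}{\sqrt2\,i}\cdot\frac{i}{2\sqrt2}A_1(n)=\frac14 A_1(n)$ this yields the stated formula. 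For $n\equiv1\pmod3$ the degenerate solutions of $A_2$ are exactly those with $n_2=0$, which occur precisely when $n=\square$ and contribute $2\chi_{-3}(\sqrt n)\sqrt n$ (the two signs of $n_1$, with $3\nmid\sqrt n$ automatic), whereas the solutions with $n_2\neq0$ again group into fours; thus $A_2(n)=2\,\mathbbm{1}_{n=\square}\chi_{-3}(\sqrt n)\sqrt n+4\sum_{\bm{n}\in\N^2,\ n_1^2+9n_2^2=n}\chi_{-3}(n_1)n_1$, and $\gamma_1(n)=c_1(n)=\frac12 A_2(n)$ gives the stated expression.

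Integrality of $\gamma_1(n)$ then follows immediately, since in every case the resulting expression is a finite sum of integers; note that the division by $\sqrt2\,i$ in the definition of $\gamma_1$ for $n\equiv2\pmod3$ is exactly what cancels the prefactor $\frac{i}{2\sqrt2}$. The step requiring the most care will be the boundary term in the case $n\equiv1\pmod3$: one must keep the degenerate solutions $n_2=0$ separate from those with $n_2\neq0$, since the former carry a factor $2$ and the latter a factor $4$, and it is precisely this discrepancy that produces the coefficient $1$ on the indicator term against the coefficient $2$ on the remaining sum.
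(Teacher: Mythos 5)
Your proposal is correct and matches the paper's approach: the paper derives this lemma as ``a direct calculation using Lemma~\ref{lem:g1g2}'', and your case analysis modulo $3$, the folding of the $\Z^2$-sums into $\N^2$-sums via the evenness of $m\mapsto\chi_{-3}(m)m$, and the careful treatment of the degenerate $n_2=0$ terms is exactly that calculation carried out in full.
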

We next use Lemma \ref{lem:g1g2} to determine which Fourier coefficients $c_{1}(n)$ vanish.
\begin{prop}\label{prop:f3612vanish}
	We have $c_{1}(n)=0$ if and only if there exists $p\equiv 3\pmod{4}$ for which $\ord_p(n)$ is odd or if $3\mid n$.
\end{prop}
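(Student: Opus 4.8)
The plan is to reduce the vanishing question to prime powers via multiplicativity, dispose of the ``easy'' primes using the explicit theta formula of \Cref{lem:g1g2}, and then treat the split primes $p\equiv1\pmod4$, which form the genuine obstacle, by a non-vanishing argument for the Satake parameters. Since $g_1$ is a normalized newform, \Cref{lem:Hecke} gives that $c_1$ is multiplicative, so $c_1(n)=\prod_p c_1(p^{\ord_p(n)})$, and it suffices to decide for each prime $p$ and each $r\ge0$ whether $c_1(p^r)=0$.

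First I would record the behaviour of $g_1$ at the inert primes and at the primes dividing the level $36$. For $p\equiv 3\pmod4$ with $p\neq3$, both lattice sums in \Cref{lem:g1g2} range over representations of $p$ by $n_1^2+n_2^2$ and by $n_1^2+(3n_2)^2$, that is, as a sum of two squares; since a prime $\equiv3\pmod4$ is not a sum of two squares, both sums are empty and $c_1(p)=0$. The Hecke recursion of \Cref{lem:Hecke} (with $\kappa=2$) then forces $c_1(p^r)=0$ exactly when $r$ is odd, while $c_1(p^{2m})=(-\chi_{12}(p)p)^m\neq0$. For $p=3$, \Cref{lem:gammainteger} gives $\gamma_1(3^r)=0$, hence $c_1(3^r)=0$ for every $r\ge1$; and for $p=2$ one reads $c_1(2)=\sqrt2\,i\neq0$ from the $q$-expansion of $g_1$, with $c_1(2^r)=c_1(2)^r\neq0$ since $2$ is the norm of the unique ideal $(1+i)$. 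These facts already yield the forward inclusion: if $3\mid n$, or if some $p\equiv3\pmod4$ (then $p\neq3$, as $3\nmid n$) satisfies $\ord_p(n)$ odd, one factor vanishes and $c_1(n)=0$.

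For the reverse inclusion I must show that if $3\nmid n$ and every prime $p\equiv3\pmod4$ divides $n$ to even order, equivalently $n$ is a sum of two squares coprime to $3$, then $c_1(n)\neq0$. By the above the contributions of $2$ and of the inert primes are nonzero, so by multiplicativity it remains to prove $c_1(p^r)\neq0$ for every split prime $p\equiv1\pmod4$ and every $r\ge0$. Here I would use that \Cref{lem:g1g2} exhibits $g_1$ as the theta series of a Hecke Grossencharacter $\psi$ of $\Q(i)$, so that at a split prime $p=\pi\bar\pi$ with $\pi=a+bi$ ($a,b\ge1$) the two Satake parameters are $\alpha=\psi((\pi))$ and $\beta=\psi((\bar\pi))$, each a root-of-unity multiple of the generator; thus $\alpha\beta=\chi_{12}(p)p$ and $|\alpha|=|\beta|=\sqrt p$. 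Then
\[
c_1(p^r)=\frac{\alpha^{r+1}-\beta^{r+1}}{\alpha-\beta},
\]
which vanishes precisely when $\alpha/\beta$ is a nontrivial root of unity.

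The crux, and the step I expect to be the main obstacle, is therefore ruling out that $\alpha/\beta$ is a root of unity. Writing $\alpha=\zeta\pi$ with $\zeta$ a root of unity and using $\beta=\chi_{12}(p)\bar\alpha$, one gets $\alpha/\beta=(\text{root of unity})\cdot(\pi/\bar\pi)$, so it suffices to show $\pi/\bar\pi=(a+bi)/(a-bi)$ is not a root of unity. If it were, some power of $\pi$ would be real, forcing $\arg\pi$ to be a rational multiple of $\pi$; but $\cos(2\arg\pi)=(a^2-b^2)/p\in\Q$, and by Niven's theorem a rational multiple of $\pi$ with rational cosine has cosine in $\{0,\pm\tfrac12,\pm1\}$. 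With $a,b\ge1$ and $a^2+b^2=p$ prime this is impossible, so $\pi/\bar\pi$ is not a root of unity and $c_1(p^r)\neq0$ for all $r$. Combining the two inclusions yields the proposition. The delicate points to get right are the precise identification of the Grossencharacter from \Cref{lem:g1g2} (so that the twist $\zeta$ is genuinely a root of unity) and the bookkeeping at the ramified primes $2$ and $3$.
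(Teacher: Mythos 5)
Your proposal is correct in its overall logic and reaches the same reduction as the paper (multiplicativity via \Cref{lem:Hecke}, vanishing at $p=3$ and at $p\equiv 3\pmod{4}$ with odd exponent from \Cref{lem:gammainteger}, non-vanishing at $2$ and at even powers of inert primes from the Hecke recursion), but for the crucial step --- non-vanishing of $c_1(p^r)$ at split primes $p\equiv 1\pmod 4$ --- you take a genuinely different route. The paper argues elementarily: it proves the stronger statement $\gamma_1(p^r)\not\equiv 0\pmod p$ by induction on $r$, using the Hecke recursion modulo $p$ together with an explicit base case $r=1$ obtained by counting the representations $p=a_1^2+a_2^2$ in \Cref{lem:gammainteger} and bounding $|\gamma_1(p)|<p$ (respectively $\tfrac12|\gamma_1(p)|<p$), split according to $p\pmod{12}$. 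You instead invoke the CM structure: writing $c_1(p^r)=\frac{\alpha^{r+1}-\beta^{r+1}}{\alpha-\beta}$ with $\alpha=\psi((\pi))$, $\beta=\psi((\overline{\pi}))$ a Gr\"ossencharacter of $\Q(i)$, vanishing would force $\pi/\overline{\pi}$ to be a root of unity, which you exclude via Niven's theorem. Your argument is sound and is the ``standard'' conceptual one (it applies uniformly to any weight-$2$ CM newform); note also that Niven is more than you need, since $\pi/\overline{\pi}=(a+bi)^2/p\in\Q(i)$ and the only roots of unity in $\Q(i)$ are $\pm1,\pm i$, which $2ab\neq 0$ and $a^2\neq b^2$ rule out immediately. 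The trade-off is exactly the one you flag: \Cref{lem:g1g2} gives a product of unary theta functions, not a Gr\"ossencharacter sum, so making your argument rigorous requires either matching that expression to an explicit $\psi$ of conductor $(3)$ and infinity type $z\mapsto z$, or importing Ribet's theorem (from $c_1(p)=0$ for all $p\equiv 3\pmod 4$, so $g_1\otimes\chi_{-4}=g_1$) --- machinery the paper deliberately avoids in favor of a self-contained induction modulo $p$.
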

\begin{proof}
Since $g_{1}$ is a newform, it has multiplicative Fourier coefficients by \Cref{lem:Hecke} and the claim is equivalent to showing that $\gamma_1(p^r)=0$ if and only if $p=3$ or ($p\equiv 3\pmod{4}$ and $r$ is odd). Note that the case $p=3$ is already established in Lemma \ref{lem:gammainteger}.

Suppose first that $p\equiv 3\pmod{4}$ and $r$ is odd. Since $n_1^2+n_2^2=p^r$ does not have any integer solutions and $p^r$ is not a square, \Cref{lem:gammainteger} implies that
\begin{equation}\label{eqn:gamma1p3rodd}
\gamma_1(p^r)=0.
\end{equation}

For the reverse direction, we claim that $\gamma_1(2^r)\neq 0$ and if $p\equiv 1\pmod{4}$, then\footnote{Note that since $\gamma_1(p^r)\in\Z$ by Lemma \ref{lem:gammainteger}, \eqref{eqn:gammanotdiv} makes sense as a congruence in the integers.}
\begin{equation}\label{eqn:gammanotdiv}
\gamma_1\left(p^r\right)\not\equiv 0\pmod{p}.
\end{equation}
Note that \eqref{eqn:gammanotdiv} implies that $c_1(p^r)\ne0$ in particular. We prove \eqref{eqn:gammanotdiv} by induction on $r\in\N_0$. Since $\gamma_1(1)=1$ by Lemma \ref{lem:gammainteger}, the claim is true for $r=0$.
In our induction below, we use Hecke relations to relate $\gamma_1(p^r)$ with $\gamma_1(p^{r-1})$ and $\gamma_1(p^{r-2})$, so we need an additional base case $r=1$, which we next prove. First assume that $p=2$. By \Cref{lem:gammainteger}, we have $\gamma_1(2) = 1$.

Next suppose that $p\equiv 1\pmod{4}$. By Lemma \ref{lem:r11n}, we have $8$ solutions in $\Z^2$ to $n_{1}^{2}+n_{2}^{2}=p$. Fixing one solution $\bm{a}\in\N^2$, we obtain the 8 solutions by\footnote{Note that $a_1=a_2$ implies $p=a_1^2+a_2^2=2a_2^2$, which contradicts $p\equiv 1\pmod{4}$.} $(\pm a_1,\pm a_2)$ and $(\pm a_2,\pm a_1)$. Thus $\bm{n}\in\N^2$ satisfies $n_1^2+n_2^2=p$ if and only if
\begin{equation}\label{eqn:ninset}
\bm{n}\in \{\bm{a},(a_2,a_1)\}.
\end{equation}

If $p\equiv 5\pmod{12}$, then by \eqref{eqn:ninset} there are precisely two terms $\bm{n}=\bm{a}$ and $\bm{n}=(a_2,a_1)$ in the sum in Lemma \ref{lem:gammainteger} and thus
\[
\gamma_1(p)=\chi_{-3}\left(a_1\right)a_1+\chi_{-3}\left(a_2\right)a_2.
\]
Since $a_1^2+a_2^2=p$, we have $a_j\leq \sqrt{p}$ and $p\ge5$ implies that $p>2\sqrt{p}$, so $|\gamma_1(p)|<p$. Since $\gamma_1(p)=0$ is impossible, $\gamma_1(p)\not\equiv 0\pmod{p}$.

If $p\equiv 1\pmod{12}$, then exactly one of $a_1$ or $a_2$ is divisible by $3$. Without loss of generality, assume that $3\mid a_2$.  Writing the terms in the sum from Lemma \ref{lem:gammainteger} as $n_1^2+(3n_2)^2=p$,
we see from \eqref{eqn:ninset} that $(n_1,3n_2)\in\{\bm{a},(a_2,a_1)\}$. Since $3\mid a_2$, we see that the sum has a single term $\bm{n}=(a_1,\frac{a_2}{3})$ and
\[
\gamma_1(p)=2\chi_{-3}(a_1)a_1.
\]
Then $\frac12|\gamma_1(p)|<p$. Since $a_1\ne0$, we have $\gamma_1(p)\neq 0$.  Since $\gamma_1(p)\neq 0$ and $\frac12|\gamma_1(p)|<p$, we see that \eqref{eqn:gammanotdiv} holds in this case as well. This completes the case $r=1$ of \eqref{eqn:gammanotdiv}.

Since $g_1\in S_2(\Gamma_0(36),\chi_{12})$ is a normalized newform, \Cref{lem:Hecke} implies that
	\begin{equation}\label{eqn:Heckerel}
		c_1\left(p^r\right) = c_1(p) c_1\left(p^{r-1}\right) - \chi_{12}(p)pc_1\left(p^{r-2}\right).
	\end{equation}
For $p=2$,
we have $\gamma_1(2)=1$ by \Cref{lem:gammainteger}, and \eqref{eqn:Heckerel} implies that
\[
\gamma_1(2^r)=
\begin{cases}
\gamma_1(2)\gamma_1\left(2^{r-1}\right)&\text{if $r$ is odd},\\
-2\gamma_1(2)\gamma_1\left(2^{r-1}\right)&\text{if $r$ is even}.
\end{cases}
\]
Hence for $r\in\N$ we have $\gamma_1(2^r)=(-2)^{\lfloor\frac{r}{2}\rfloor}\neq 0$ by induction.

Next suppose that $p\equiv 1\pmod{4}$ and assume that \eqref{eqn:gammanotdiv} holds for $j\in\N$ with $j<r$.
 If $p\equiv 1\pmod{12}$, then $p^j\equiv 1\pmod{3}$ for all $j\in\N_0$, so $\gamma_1(p^j)=c_1(p^j)$ and \eqref{eqn:Heckerel} implies
\begin{equation*}
\gamma_1\left(p^r\right)=\gamma_1(p) \gamma_1\left(p^{r-1}\right) - \chi_{12}(p)p\gamma_1\left(p^{r-2}\right)\equiv \gamma_1(p) \gamma_1\left(p^{r-1}\right) \not \equiv 0\pmod{p},
\end{equation*}
where we use the inductive hypothesis \eqref{eqn:gammanotdiv} and $\gamma_1(p) \not\equiv 0 \pmod{p}$ in the last step.

For $p\equiv 5\pmod{12}$, we have $p^r\equiv 1\pmod{3}$ if $r$ is even and $p^r\equiv 2\pmod{3}$ if $r$ is odd, so we split into the cases $r$ even and $r$ odd. For $r$ even, we have $r-1$ odd and $r-2$ even, so \eqref{eqn:Heckerel} implies that
\begin{align*}
\gamma_1\left(p^r\right)= \sqrt{2}i\gamma_1(p)\sqrt{2}i\gamma_1\left(p^{r-1}\right) -\chi_{12}(p)p\gamma_1\left(p^{r-2}\right) \equiv  -2\gamma_1(p)\gamma_1\left(p^{r-1}\right)\not\equiv 0\pmod{p},
\end{align*}
where we use the inductive hypothesis, $p\neq 2$, and $\gamma_1(p) \not\equiv 0 \pmod{p}$ in the last step.

If $p\equiv 5\pmod{12}$ and $r$ is odd, then $r-1$ is even and $r-2$ is odd, so \eqref{eqn:Heckerel} implies that
\begin{equation*}
\gamma_1\left(p^r\right)\equiv\gamma_1(p)\gamma_1\left(p^{r-1}\right)\not\equiv 0\pmod{p}.
\end{equation*}

We finally inductively show that $c(p^r)\neq 0$ for $3<p\equiv 3\pmod{4}$ and $r$ even. The base case $r=0$ is established by $c_1(1)=1$. Suppose that $r\geq 2$ is even. Since $r-1$ is odd, we have $c_1(p^{r-1})=0$ by \eqref{eqn:gamma1p3rodd}. Hence in this case \eqref{eqn:Heckerel} simplifies as
\[
c_1\left(p^r\right)= - \chi_{12}(p)pc_1\left(p^{r-2}\right)\neq 0
\]
by induction. In the last step, we use the fact that $\chi_{12}(p)\neq 0$ because $p\neq 3$.
\rm
 \end{proof}

\section{Proof of \Cref{thm:L52}}\label{sec:L52}
\subsection{The case $1^{-1}3^34^2$}
In this subsection, we prove half of \Cref{thm:L52}.
\begin{prop}\label{prop:1^{-1}3^34^2}
We have
\[
\mathcal{S}_{1^{-1}3^34^2}
=\left\{n\in\N: \exists p\equiv 3\pmod{4},\ \ord_p(3n+2)\text{ {\rm is odd}}\right\}.
\]
\end{prop}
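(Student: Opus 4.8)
The plan is to pass to the sieved generating function and realize it as a genuine modular form of level $144$. Applying $V_3$ to the identity $\eta(z)^{-1}\eta(3z)^3\eta(4z)^2=q^{2/3}\sum_{n\ge0}C_{1^{-1}3^34^2}(n)q^n$ gives
\[
G(z):=\eta(3z)^{-1}\eta(9z)^3\eta(12z)^2=\sum_{n\ge0}C_{1^{-1}3^34^2}(n)q^{3n+2},
\]
a $q$-series supported on the residue class $2\pmod 3$. The crucial point is that, although this $\eta$-quotient fails the second congruence in \Cref{lem:eta} at level $36$, it satisfies \emph{both} congruences at level $N=144$: with exponents $r_3=-1$, $r_9=3$, $r_{12}=2$ one has $\sum_{\delta\mid 144}\tfrac{144}{\delta}r_\delta=-48+48+24=24\equiv0\pmod{24}$. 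After checking holomorphy at the cusps with the usual order formula, \Cref{lem:eta} then yields $G\in S_2(\Gamma_0(144),\chi_{12})$. Since $3\nmid 3n+2$, \Cref{prop:f3612vanish} shows the set on the right-hand side of \Cref{prop:1^{-1}3^34^2} is exactly $\{n:c_1(3n+2)=0\}=\{n:\gamma_1(3n+2)=0\}$, so it suffices to prove that $C_{1^{-1}3^34^2}(n)=0$ if and only if $\gamma_1(3n+2)=0$.

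Next I would identify $G$ explicitly inside $S_2(\Gamma_0(144),\chi_{12})$. By \Cref{lem:operators} the oldform images $g_1|V_d,g_2|V_d$ ($d\in\{1,2,4\}$) lie in this space, as do the level-$144$ CM newforms for $\Q(i)$ with this character, or equivalently the weight-$2$ theta products built from $\Theta$ and $\theta(\chi_{-3},1,\cdot)$ as in \Cref{lem:g1g2}. A short computation shows $G$ is \emph{not} in the span of $g_1|V_d,g_2|V_d$ alone, so genuine level-$144$ components are present. I would write $G$ as an explicit linear combination of these building blocks, pin down the coefficients by matching finitely many Fourier coefficients, and certify the identity with the valence formula (\Cref{lem:valence}); at weight $2$ and level $144$ this amounts to checking agreement up to $q^{48}$. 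The payoff is a closed expression for $C_{1^{-1}3^34^2}(n)=G_{3n+2}$ as a weighted count of the representations $3n+2=n_1^2+n_2^2$, with $\chi_{-3}$-type weights analogous to \Cref{lem:gammainteger}.

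From this the easy implication is immediate. If some $p\equiv3\pmod4$ divides $3n+2$ to an odd power, then $3n+2$ is not a sum of two squares, so there are no representations and $C_{1^{-1}3^34^2}(n)=0$. Equivalently, every eigenform occurring in the decomposition of $G$ has CM by $\Q(i)$, so (by the Hecke recursion and multiplicativity, just as for $c_1$ in \Cref{prop:f3612vanish}) its $(3n+2)$-th coefficient vanishes whenever $3n+2$ is not a norm; here the shifts $V_d$ are harmless because $d\in\{1,2,4\}$ are themselves norms.

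The main obstacle is the reverse implication: showing $C_{1^{-1}3^34^2}(n)\neq0$ whenever $3n+2$ is a sum of two squares, i.e.\ that no cancellation occurs. Because $G$ is a sum of several eigenforms, the coefficient $G_{3n+2}$ is not itself multiplicative, so the clean prime-power recursion of \Cref{prop:f3612vanish} does not apply verbatim. I expect to argue directly with the weighted representation count from the theta decomposition: after reducing via \Cref{lem:Hecke} and the size bound of \Cref{thm:Deligne} to prime-power arguments $3n+2=p^r$ and to multiplicativity across the split and ramified primes, I would prove non-vanishing by an induction mirroring the one for $\gamma_1(p^r)$ — for instance by exhibiting a dominant representation whose contribution is nonzero modulo a suitable split prime $p$, so that it cannot be cancelled by the remaining terms. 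Establishing this non-cancellation is where the real work lies; granting it, combining the two implications yields \Cref{prop:1^{-1}3^34^2}.
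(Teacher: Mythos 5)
Your setup is sound: passing to $G(z)=\eta(3z)^{-1}\eta(9z)^3\eta(12z)^2\in S_2(\Gamma_0(144),\chi_{12})$ is exactly the object the paper works with, the reduction of the right-hand side to $\{n:c_1(3n+2)=0\}$ via \Cref{prop:f3612vanish} is legitimate, and the easy implication (no representations of $3n+2$ as a sum of two squares forces vanishing of every CM component, with $V_d$ for $d\in\{1,2,4\}$ harmless since $d$ is a norm) is fine. But the proof is incomplete where you yourself flag it: the reverse implication. You write $G$ as a linear combination of up to eight eigenform images $g_1|V_d$, $g_2|V_d$, and level-$144$ newforms, observe that the resulting coefficient $G_{3n+2}$ is then a sum of several multiplicative functions which is not itself multiplicative, and propose to rule out cancellation by ``exhibiting a dominant representation'' --- granting this as the remaining work. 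That non-cancellation step is the entire difficulty, and as stated there is no argument for it; a sum of distinct CM eigenforms can certainly have accidental vanishing at norms, so nothing short of a genuine proof closes this.

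The idea you are missing, and which the paper uses to make the hard direction disappear, is that the decomposition can be arranged through \emph{sieving operators supported on pairwise disjoint residue classes}. Lemma \ref{lem:(1,-1),(3,3),(4,2)} gives
\[
\frac{\eta^3(9z)\eta^2(12z)}{\eta(3z)}=-\frac{i}{\sqrt2}\,g_1\big|S_{12,2}+\frac{i}{\sqrt2}\,g_1\big|S_{12,8}-\frac{i}{3\sqrt2}\,g_3\big|S_{6,5},
\]
where $g_3\in S_2(\Gamma_0(144),\chi_{12})$ is a single newform. Since every $N\equiv2\pmod3$ lies in exactly one of the classes $2,8\pmod{12}$ (if $N$ is even) or $5\pmod 6$ (if $N$ is odd), the coefficient of $q^{3n+2}$ is a \emph{single} nonzero scalar times either $c_1(3n+2)$ or $c_3(3n+2)$ --- no two eigenforms ever contribute to the same coefficient, so there is nothing to cancel. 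The vanishing question then reduces immediately to \Cref{prop:f3612vanish} and \Cref{prop:g521vanish}, each of which is handled by the multiplicativity and prime-power induction you correctly anticipate for a single newform. To repair your argument, replace the generic eigenform expansion of $G$ by such a sieved identity (certified by \Cref{lem:valence} exactly as you propose), after which your plan for the prime-power analysis goes through verbatim for each of the two newforms separately.
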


Using Lemmas \ref{lem:operators} (2), \ref{lem:eta}, and \ref{lem:valence}, we first relate the eta-quotient to $g_1$ and the newform
\[
g_{3}(z)=q+3\sqrt2i q^5+4q^{13}-3\sqrt2i q^{17}+O\left(q^{25}\right)\in S_{2}\left(\Gamma_0(144),\chi_{12}\right).
\]
\begin{lem}\label{lem:(1,-1),(3,3),(4,2)}
We have
\[
	\frac{\eta^3(9z)\eta^2(12z)}{\eta(3z)}=  -\frac i{\sqrt2} g_{1}\big|S_{12,2}(z) + \frac{i}{\sqrt{2}} g_{1}\big|S_{12,8}(z) - \frac i{3\sqrt2} g_{3}\big|S_{6,5}(z).
\]
\end{lem}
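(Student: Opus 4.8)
The plan is to exhibit both sides of the identity as elements of the single space $M_2(\Gamma_0(144),\chi_{12})$ and then use the valence formula (Lemma~\ref{lem:valence}) to reduce the equality to a finite comparison of Fourier coefficients.

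First I would show that the left-hand side lies in $M_2(\Gamma_0(144),\chi_{12})$ by applying Lemma~\ref{lem:eta} with $N=144$ and nonzero exponents $(r_3,r_9,r_{12})=(-1,3,2)$. The weight is $\kappa=\tfrac12(-1+3+2)=2\in\Z$, and the two congruences hold: $\sum_{\delta\mid N}\delta r_\delta=-3+27+24=48\equiv0\pmod{24}$ and $\sum_{\delta\mid N}\tfrac{N}{\delta}r_\delta=-48+48+24=24\equiv0\pmod{24}$. Since $s=\prod_\delta\delta^{r_\delta}=3^{-1}9^3 12^2=2^4 3^7=3\cdot108^2$, the nebentypus is $\chi_{(-1)^2 s}=\chi_{12}$. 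Because the exponent $r_3=-1$ is negative, I would separately confirm holomorphy at every cusp of $\Gamma_0(144)$ using the standard order formula for $\eta$-quotients, i.e.\ that $\sum_{\delta\mid N}\tfrac{\gcd(c,\delta)^2}{\delta}r_\delta\ge0$ for each $c\mid144$; since the $q$-expansion begins at $q^2$, the quotient is in fact a cusp form.

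Next I would place each summand on the right in the same space. By Lemma~\ref{lem:operators}(2), sieving $g_1\in M_2(\Gamma_0(36),\chi_{12})$ by $S_{12,m}$ yields an element of $M_2(\Gamma_0(\lcm(36,12^2,12\cdot12)),\chi_{12})=M_2(\Gamma_0(144),\chi_{12})$, so both $g_1\big|S_{12,2}$ and $g_1\big|S_{12,8}$ lie there; likewise $g_3\big|S_{6,5}\in M_2(\Gamma_0(\lcm(144,36,72)),\chi_{12})=M_2(\Gamma_0(144),\chi_{12})$. Multiplying by the scalar constants $-\tfrac{i}{\sqrt2}$, $\tfrac{i}{\sqrt2}$, $-\tfrac{i}{3\sqrt2}$ and subtracting, I obtain that the difference of the two sides is a single form in $M_2(\Gamma_0(144),\chi_{12})$.

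Finally, I would invoke Lemma~\ref{lem:valence}: for $N=144$ and $\kappa=2$ the bound equals $N\tfrac{\kappa}{12}\prod_{p\mid N}(1+\tfrac1p)=144\cdot\tfrac16\cdot\tfrac32\cdot\tfrac43=48$, so the identity follows once the coefficients of the two sides agree for $0\le n\le48$; this is a finite and routine (computer-assisted) verification. As a consistency check, the left-hand coefficient of $q^2$ is $1$, while on the right only $-\tfrac{i}{\sqrt2}g_1\big|S_{12,2}$ contributes there, giving $-\tfrac{i}{\sqrt2}c_1(2)=-\tfrac{i}{\sqrt2}\cdot\sqrt2\, i=1$. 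I expect the main obstacle to be conceptual rather than computational: correctly identifying the common level $144$ and nebentypus $\chi_{12}$—in particular verifying holomorphy of the $\eta$-quotient at all cusps in the presence of the negative exponent $r_3=-1$, and confirming that none of the three sieving operations forces a level larger than $144$. Once both sides are known to lie in $M_2(\Gamma_0(144),\chi_{12})$, the valence formula makes the remaining verification entirely mechanical.
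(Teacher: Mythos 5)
Your proposal follows the paper's own route exactly: the paper derives this identity by combining Lemma~\ref{lem:eta} (modularity of the $\eta$-quotient), Lemma~\ref{lem:operators}~(2) (to place the sieved newforms in $M_2(\Gamma_0(144),\chi_{12})$), and Lemma~\ref{lem:valence} (to reduce to a finite coefficient check), which is precisely your argument, and your level, nebentypus, and valence-bound computations are all correct. The only slight looseness is the aside that vanishing of the $q$-expansion at $i\infty$ makes the quotient a cusp form (one needs positivity of the order at \emph{every} cusp for that), but since only holomorphy at the cusps is needed to apply the valence lemma, this does not affect the proof.
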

In addition to Lemma \ref{lem:gammainteger}, we require a formula for the Fourier coefficients $c_3(n)$ of $g_{3}$.
Using Lemmas \ref{lem:unarytheta}, \ref{lem:operatorshalf} (3), \ref{lem:halfintmult}, \ref{lem:operators} (2), and \ref{lem:valence}, it is not hard to show the following formula for $g_3$ and its Fourier coefficients.
\begin{lem}\label{lem:g521}
We have
\begin{align*}
g_{3}(z)=&\left(\theta\left(\chi_{-3},1,4z\right)\Theta(z)\right)\big|S_{12,1} +\frac{1}{2} \left(\theta\left(\chi_{-3},1,z\right)\Theta(4z)\right)\big|S_{12,1}\\
&+\frac{i}{\sqrt{2}}\left(\theta\left(\chi_{-3},1,4z\right)\Theta(z)\right)\big|S_{12,5} +\frac{i}{2\sqrt{2}} \left(\theta\left(\chi_{-3},1,z\right)\Theta(4z)\right)\big|S_{12,5}.
\end{align*}
In particular, the $n$-th Fourier coefficient $c_3(n)$ of $g_{3}$ is
\[
\begin{cases}
\mathbbm{1}_{n=\square}\chi_{-3}(\sqrt{n}) \sqrt{n}+ 4\hspace{-.25cm}\displaystyle\sum_{\substack{\bm{m}\in\N^2\\ 4m_1^2+9m_2^2=n}}\hspace{-.25cm} \chi_{-3}\left(m_1\right)m_1 + 2\hspace{-.25cm}\sum_{\substack{\bm{m}\in\N^2\\ m_1^2+36m_2^2=n}}\hspace{-.25cm} \chi_{-3}\left(m_1\right)m_1&\text{if }n\equiv 1\pmod{12},\\
2\sqrt{2}i\hspace{-.25cm}\displaystyle\sum_{\substack{\bm{m}\in\N^2\\ 4m_1^2+m_2^2=n}}\hspace{-.25cm} \chi_{-3}\left(m_1\right)m_1 + \sqrt{2}i\hspace{-.25cm}\sum_{\substack{\bm{m}\in\N^2\\ m_1^2+4m_2^2=n}}\hspace{-.25cm} \chi_{-3}\left(m_1\right)m_1&\text{if }n\equiv 5\pmod{12},\\
0&\text{otherwise}.
\end{cases}
\]
\end{lem}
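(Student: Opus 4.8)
The plan is to exhibit every summand on the right-hand side as an element of $M_2(\Gamma_0(144),\chi_{12})$, so that the asserted identity becomes an equality of forms in one finite-dimensional space that can be settled by the valence formula; the coefficient formula then falls out by expanding the underlying theta products. First I would track modularity of the two weight-two building blocks. Since $\chi_{-3}$ is odd of conductor $3$ and $\chi_{-3}\chi_{-4}=\chi_{12}$, \Cref{lem:unarytheta} (2) gives $\theta(\chi_{-3},1,z)\in S_{\frac32}(\Gamma_0(36),\chi_{12})$, while $\Theta\in M_{\frac12}(\Gamma_0(4))$ by \Cref{lem:unarytheta} (1). Applying $V_4$ through \Cref{lem:operatorshalf} (3) yields $\theta(\chi_{-3},1,4z)\in S_{\frac32}(\Gamma_0(144),\chi_{12}\chi_{16})$ and $\Theta(4z)\in M_{\frac12}(\Gamma_0(16),\chi_{16})$, where $\chi_{16}$ is trivial on integers coprime to the level and so does not affect the nebentypus. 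Multiplying a weight-$\frac32$ factor by a weight-$\frac12$ factor via \Cref{lem:halfintmult} (whose character twist is $\chi_{-4}^{2}=\chi_1$) then places both $\theta(\chi_{-3},1,4z)\Theta(z)$ and $\theta(\chi_{-3},1,z)\Theta(4z)$ in $M_2(\Gamma_0(144),\chi_{12})$. Finally, because $12\mid 24$ and $\lcm(144,12^2,12\cdot 12)=144$, \Cref{lem:operators} (2) keeps each sieved piece $(\,\cdot\,)|S_{12,1}$ and $(\,\cdot\,)|S_{12,5}$ in $M_2(\Gamma_0(144),\chi_{12})$, so the whole right-hand side lies there.

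With $g_3$ and the right-hand side both in $M_2(\Gamma_0(144),\chi_{12})$, I would apply \Cref{lem:valence} to their difference. The valence bound is $144\cdot\tfrac{2}{12}(1+\tfrac12)(1+\tfrac13)=48$, so it suffices to verify that the two $q$-expansions agree for $0\le n\le 48$, a direct finite computation.

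For the coefficient formula I would expand the products as lattice sums over $\bm m\in\Z^2$, namely $\sum_{4m_1^2+m_2^2=n}\chi_{-3}(m_1)m_1$ and $\sum_{m_1^2+4m_2^2=n}\chi_{-3}(m_1)m_1$, and then impose the sieve. Since $\chi_{-3}(m_1)=0$ unless $3\nmid m_1$, the congruence $4m_1^2\equiv 4\pmod{12}$ (resp. $m_1^2\equiv 1$ or $4\pmod{12}$) is forced, and a short case analysis modulo $12$ shows the following: for $n\equiv 5\pmod{12}$ one is forced into $3\nmid m_2$, leaving the forms $4m_1^2+m_2^2$ and $m_1^2+4m_2^2$ untouched; for $n\equiv 1\pmod{12}$ one is forced into $3\mid m_2$, and writing $m_2=3m_2'$ converts these into $4m_1^2+9m_2'^2$ and $m_1^2+36m_2'^2$. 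Passing from $\Z^2$ to $\N^2$ contributes the sign factor $4$, which after combining with the coefficients $\tfrac{i}{\sqrt2},\tfrac{i}{2\sqrt2}$ (for $n\equiv 5$) and $1,\tfrac12$ (for $n\equiv 1$) produces exactly the displayed multipliers; the lone diagonal term $m_2'=0$, possible only when $n$ is a square, supplies the $\mathbbm{1}_{n=\square}\chi_{-3}(\sqrt n)\sqrt n$ summand.

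The principal difficulty is bookkeeping rather than conceptual: one must confirm that both level and nebentypus genuinely collapse to $(\Gamma_0(144),\chi_{12})$, in particular that the spurious $2$-power factor $\chi_{16}$ drops out, and one must run the modulo-$12$ analysis carefully enough to see that the parity forced by $n$ being odd renders the ``automatic'' conditions (such as $m_2'$ odd and $m_1$ coprime to $6$) redundant, so that the clean binary forms appear with no residual restrictions and the correct numerical factors emerge.
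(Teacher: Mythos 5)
Your proposal is correct and follows essentially the same route as the paper, which likewise derives this identity from Lemmas \ref{lem:unarytheta}, \ref{lem:operatorshalf} (3), \ref{lem:halfintmult}, \ref{lem:operators} (2), and \ref{lem:valence}, checking finitely many coefficients up to the valence bound and then reading off the lattice-sum formula. The level/nebentypus bookkeeping and the modulo-$12$ case analysis you give are exactly the details the paper leaves to the reader.
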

A computation similar to the case of \Cref{prop:f3612vanish} gives a classification for those $n\in\N$ for which $c_3(n)$ vanishes.
\begin{prop}\label{prop:g521vanish}
	For $n\in\N$, we have that $c_{3}(n)$ vanishes if and only if $\gcd(n,6)>1$ or if there exists a prime $p\equiv 3\pmod{4}$ with $\ord_p(n)$ odd.
\end{prop}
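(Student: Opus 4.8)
The plan is to mirror the proof of \Cref{prop:f3612vanish}. Since $g_3\in S_2(\Gamma_0(144),\chi_{12})$ is a normalized newform, \Cref{lem:Hecke} shows that its Fourier coefficients are multiplicative, so it suffices to determine precisely when $c_3(p^r)=0$ for a prime power $p^r$. Comparing against the claimed vanishing set, one must show that $c_3(p^r)=0$ exactly when $p\in\{2,3\}$ (contributing the condition $\gcd(n,6)>1$) or when $p\equiv3\pmod4$ with $p>3$ and $r$ odd, and that $c_3(p^r)\neq0$ in all remaining cases, namely $p\equiv1\pmod4$ for all $r$, and $p\equiv3\pmod4$ with $p>3$ and $r$ even. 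First I would record that the coefficient formula in \Cref{lem:g521} is supported on $n\equiv1,5\pmod{12}$ and that, after dividing by $\sqrt2 i$ on the class $n\equiv5\pmod{12}$, it defines an integer $\gamma_3(n)$; this is the analog of \Cref{lem:gammainteger}.

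The easy vanishing cases follow at once from the support. If $p\in\{2,3\}$ then $p^r$ is never $\equiv1,5\pmod{12}$, so $c_3(p^r)=0$; likewise, if $p\equiv3\pmod4$ with $r$ odd then $p^r\equiv3\pmod4$, again outside the support, giving $c_3(p^r)=0$. For $p\equiv3\pmod4$ with $p>3$ and $r$ even I would argue by induction on $r$ using the Hecke recursion of \Cref{lem:Hecke}, $c_3(p^r)=c_3(p)c_3(p^{r-1})-\chi_{12}(p)p\,c_3(p^{r-2})$: since $r-1$ is odd we have $c_3(p^{r-1})=0$, so $c_3(p^r)=-\chi_{12}(p)p\,c_3(p^{r-2})$, which is nonzero by induction (base case $c_3(1)=1$) because $\gcd(p,12)=1$ forces $\chi_{12}(p)\neq0$. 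This is verbatim the final paragraph of \Cref{prop:f3612vanish}.

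The substantive case is $p\equiv1\pmod4$, where I would prove $\gamma_3(p^r)\not\equiv0\pmod p$ by induction on $r$, exactly as in \eqref{eqn:gammanotdiv}. The base case $r=0$ is $\gamma_3(1)=1$, and the inductive step uses the same Hecke relation, split according to whether $p\equiv1$ or $5\pmod{12}$ (in the latter case the factors of $\sqrt2 i$ contribute a factor $-2$ on the even powers, exactly as in \Cref{prop:f3612vanish}). The genuine work is the base case $r=1$, and I expect this to be the main obstacle: one must evaluate $\gamma_3(p)$ from \Cref{lem:g521} by counting representations of the prime $p$ by the relevant binary quadratic forms ($4x^2+9y^2$ and $x^2+36y^2$ when $p\equiv1\pmod{12}$, and $4x^2+y^2$ and $x^2+4y^2$ when $p\equiv5\pmod{12}$). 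The key observation is that, writing $p=a^2+b^2$ (unique up to order and sign since $p$ is prime), the residue of $p$ modulo $3$ forces exactly one of these forms to represent $p$, and with a single representation in $\N^2$; tracking which of $a,b$ is even and which is divisible by $3$ pins down that representation explicitly. This yields $\gamma_3(p)\neq0$ together with the bound $|\gamma_3(p)|<2\sqrt p<p$ for $p\geq5$, whence $\gamma_3(p)\not\equiv0\pmod p$. Finally, since $\gamma_3(p^r)\not\equiv0\pmod p$ implies $c_3(p^r)\neq0$, this completes the classification and hence the proposition.
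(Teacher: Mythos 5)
Your proposal is correct and follows exactly the route the paper intends: the paper gives no proof of this proposition beyond the remark that the computation is similar to that of \Cref{prop:f3612vanish}, and your argument is precisely that computation (support of $c_3$ on $n\equiv 1,5\pmod{12}$, multiplicativity, the Hecke recursion for $p\equiv 3\pmod 4$ with $r$ even, and the mod-$p$ induction on $\gamma_3(p^r)$ for $p\equiv 1\pmod 4$). One small correction to your sketch of the base case $r=1$: for $p\equiv 5\pmod{12}$ \emph{both} forms $4m_1^2+m_2^2$ and $m_1^2+4m_2^2$ represent $p$ (once each in $\N^2$, with the roles of the even and odd parts of $a^2+b^2=p$ swapped), so $\gamma_3(p)=\chi_{-3}(a/2)\,a+\chi_{-3}(b)\,b$ is a sum of two terms whose nonvanishing needs the extra observation $a\neq b$ (forced by parity), exactly as in the $p\equiv 5\pmod{12}$ case of \Cref{prop:f3612vanish}; it is only for $p\equiv 1\pmod{12}$ that exactly one of the two relevant forms represents $p$.
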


Lemma \ref{lem:(1,-1),(3,3),(4,2)} and \Cref{prop:g521vanish} now directly imply \Cref{prop:1^{-1}3^34^2}.

\subsection{The case $1^42^{-2}4^4$}
In this subsection, we prove the other half of \Cref{thm:L52}.
\begin{prop}\label{prop:1^42^{-2}4^4}
We have
\[
\mathcal{S}_{1^{4}2^{-2}4^4}=\left\{n\in\N: \exists p\equiv 3\pmod{4},\ \ord_p(3n+2)\text{ {\rm is odd}}\right\}.
\]
\end{prop}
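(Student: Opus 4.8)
The plan is to mirror the proof of \Cref{prop:1^{-1}3^34^2}: reduce to an explicit weight-$3$ form whose Fourier coefficients are a two-squares lattice sum, and then show these vanish exactly on $\{n:r_{(1,1)}(3n+2)=0\}$, which by \Cref{lem:r11n} is the set in the statement and, by \Cref{prop:1^{-1}3^34^2}, equals $\mathcal{S}_{1^{-1}3^34^2}$. First I would clear denominators. Since $\frac{1}{24}\sum_j j\delta_j=\frac{16}{24}=\frac23$ for $1^42^{-2}4^4$, the relevant holomorphic $\eta$-quotient is
\[
F(z):=\frac{\eta(3z)^4\eta(12z)^4}{\eta(6z)^2},
\]
for which $C_{1^42^{-2}4^4}(n)$ is the coefficient of $q^{3n+2}$. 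Checking the hypotheses of \Cref{lem:eta} with $N=36$ (one computes $\sum_{\delta\mid N}\delta r_\delta=48$ and $\sum_{\delta\mid N}\frac N\delta r_\delta=48$, both $\equiv0\pmod{24}$, and $s=3^4 6^{-2}12^4=6^6$) gives $F\in M_3(\Gamma_0(36),\chi_{-4})$, the character being $\chi_{-4}$ because $6^6$ is a perfect square. Applying \Cref{lem:operators}(2) with $M=3$ preserves the level, so $F\big|S_{3,2}\in M_3(\Gamma_0(36),\chi_{-4})$, and it is this sieved form whose coefficients I must identify.

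Next I would produce an explicit theta identity for $F\big|S_{3,2}$, the weight-$3$ analog of \Cref{lem:(1,-1),(3,3),(4,2)} and \Cref{lem:g521}. By \Cref{lem:unarytheta}(2) we have $\theta(\chi_{-3},1,z)\in S_{3/2}(\Gamma_0(36),\chi_{12})$ (using $\chi_{-3}\chi_{-4}=\chi_{12}$), and by \Cref{lem:halfintmult} products $\theta(\chi_{-3},1,az)\theta(\chi_{-3},1,bz)$ lie in $M_3$ with character $\chi_{12}^2\chi_{-4}^3=\chi_{-4}$, matching $F$; moreover such products are cuspidal. I would guess the correct linear combination over scalings $a,b\in\{1,\ldots\}$ that are products of powers of $2$ and $3$ dividing the level, and verify the identity using the valence formula (\Cref{lem:valence}), which here requires matching Fourier coefficients only up to $n=36\cdot\frac{3}{12}\prod_{p\mid 36}(1+\frac1p)=18$. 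Expanding the theta products and sieving to $3n+2$ then yields a closed formula for $C_{1^42^{-2}4^4}(n)$ as a sum over $\bm{n}\in\Z^2$ with $n_1^2+n_2^2=3n+2$ (the norm form of $\Z[i]$), weighted by $\chi_{-3}$-factors times $n_1n_2$; the support on $3n+2\equiv2\pmod3$ is automatic.

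Finally I would run the Hecke-relation induction of \Cref{prop:f3612vanish}. The lattice-sum formula exhibits $F\big|S_{3,2}$, after normalization, as a weight-$3$ form with CM by $\Q(i)$, whose coefficients $b(m)$ are multiplicative (\Cref{lem:Hecke}) and satisfy $b(p^r)=b(p)b(p^{r-1})-\chi(p)p^2 b(p^{r-2})$. For inert $p\equiv3\pmod4$ with $p\neq3$ one has $b(p)=0$, forcing $b(p^r)=0$ exactly when $r$ is odd; for split $p\equiv1\pmod4$ and for $p=2$ one shows $b(p^r)\not\equiv0\pmod p$ (resp.\ $b(2^r)\neq0$) by induction, with base cases $r=0,1$ being finite lattice-sum evaluations controlled by \Cref{thm:Deligne}, and the prime $p=3$ handled by the $3n+2$ support. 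This gives $C_{1^42^{-2}4^4}(n)=0$ if and only if $r_{(1,1)}(3n+2)=0$, i.e.\ (\Cref{lem:r11n}) if and only if there exists $p\equiv3\pmod4$ with $\ord_p(3n+2)$ odd.

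The main obstacle is the second step: guessing and then verifying the precise weight-$3$ theta identity, and in particular confirming that $F\big|S_{3,2}$ is purely cuspidal so that no Eisenstein series of $M_3(\Gamma_0(36),\chi_{-4})$ interferes with the CM Hecke argument. I expect this is cleanest to settle by showing $F$ itself is a cusp form, via the Ligozat cusp-order computation that also justifies holomorphy of $F$ and hence the application of \Cref{lem:eta}; once $F$ vanishes at every cusp, $F\big|S_{3,2}$ is automatically cuspidal and expressible through the odd-theta products. After the identity is established, the induction in the third step is routine and structurally identical to \Cref{prop:f3612vanish} and \Cref{prop:g521vanish}, differing only in the weight-$3$ Hecke relation and the factor $p^2$ in place of $p$.
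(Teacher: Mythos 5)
Your overall architecture is the paper's: the paper likewise shows $\frac{\eta^4(3z)\eta^4(12z)}{\eta^2(6z)}=-\frac12 g_4|S_{3,2}$ for a normalized newform $g_4\in S_3(\Gamma_0(36),\chi_{-4})$ (\Cref{lem:142m244}), expresses $g_4|S_{3,2}$ as $-\frac14\Theta_1$ with $\Theta_1=2\theta^2(\chi_{-3},1,z)$ (\Cref{lem:f522}), and runs the multiplicativity/Hecke induction of \Cref{prop:f3612vanish} (\Cref{prop:f522vanish}). However, there is a genuine gap in your step (iii). The eta-quotient $F$ is supported on exponents $\equiv 2\pmod 3$, so your theta identity only determines $c_4(m)$ for $m\equiv 2\pmod 3$. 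But \Cref{lem:Hecke} gives $c_4(3n+2)=\prod_p c_4(p^{\ord_p(3n+2)})$, and the prime powers $p^r$ dividing $3n+2$ are very often $\equiv 1\pmod 3$ (e.g.\ every $p\equiv 1\pmod{12}$, and every $p\equiv 7\pmod{12}$). For those you have no formula, so neither the vanishing $c_4(p)=0$ for inert $p\equiv 7\pmod{12}$ nor the base case $c_4(p)\not\equiv 0\pmod p$ for split $p\equiv 1\pmod{12}$ is a ``finite lattice-sum evaluation'' in your setup. The paper closes exactly this hole by adjoining the components $\Theta_2-\Theta_3=2\left[\Theta,\Theta|V_9\right]_1$ (\Cref{lem:RankinCohen}), which give $c_4(n)=\mathbbm{1}_{n=\square}n+2\sum_{n_1^2+9n_2^2=n}(n_1^2-9n_2^2)$ for $n\equiv 1\pmod 3$; note this piece is a Rankin--Cohen bracket of two theta functions, not a product $\theta(\chi_{-3},1,az)\theta(\chi_{-3},1,bz)$, so it would not be found by searching your proposed family. (The gap is repairable — e.g.\ one could also extract $c_4(p)=c_4(2p)/c_4(2)$ with $2p\equiv 2\pmod 3$ — but some such device must be supplied, and with it the identification of the full newform $g_4$ rather than only its sieve; multiplicativity is a property of $g_4$, not of $F|S_{3,2}$.)

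A second, smaller problem: in weight $3$, \Cref{thm:Deligne} gives $|c_4(p)|\le 2p$, which does not rule out $c_4(p)\equiv 0\pmod p$, so the base case of the induction cannot be ``controlled by Deligne'' as in the weight-$2$ argument where $|\gamma_1(p)|<p$. One must instead use the explicit lattice sums: for $p\equiv 5\pmod{12}$ one gets $c_4(p)=\pm 4a_1a_2$ with $0<a_1,a_2<\sqrt p$, hence $p\nmid c_4(p)$; for $p\equiv 1\pmod{12}$ one gets $c_4(p)=2(a_1^2-a_2^2)\equiv 4a_1^2\not\equiv 0\pmod p$ — again requiring the $1\pmod 3$ formula. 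Your remaining steps (the level/character computation for $F$, the Sturm bound $18$, and the cuspidality remark) are correct and match the paper.
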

We first relate the eta-quotient to a normalized newform
\[
g_4(z)=q-2q^2+4q^4+8q^5-8q^8+O\left(q^{10}\right)\in S_{3}\left(\Gamma_0(36),\chi_{-4}\right).
\]
Using Lemmas \ref{lem:eta}, \ref{lem:operators}, and \ref{lem:valence}, we obtain the following.
\begin{lem}\label{lem:142m244}
	We have
	\[
		\frac{\eta^4(3z)\eta^4(12z)}{\eta^2(6z)}=-\frac{1}{2}g_{4}\big|S_{3,2}(z).
	\]
\end{lem}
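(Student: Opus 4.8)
The plan is to show that both sides of the claimed identity lie in the same space $M_3(\Gamma_0(36),\chi_{-4})$ and then to deduce the identity by matching finitely many Fourier coefficients via the valence formula (\Cref{lem:valence}).

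First I would use \Cref{lem:eta} to show that $f(z):=\frac{\eta^4(3z)\eta^4(12z)}{\eta^2(6z)}=\prod_{\delta\mid 36}\eta(\delta z)^{r_\delta}$, with $r_3=4$, $r_6=-2$, $r_{12}=4$ and $r_\delta=0$ otherwise, lies in $M_3(\Gamma_0(36),\chi_{-4})$. The weight is $\kappa=\frac12(4-2+4)=3$, and with $N=36$ one checks $\sum_{\delta\mid 36}\delta r_\delta=12-12+48=48\equiv 0\pmod{24}$ together with $\sum_{\delta\mid 36}\frac N\delta r_\delta=48-12+12=48\equiv 0\pmod{24}$, so the hypotheses hold. (Here $N=36$ is the smallest admissible level: the candidates $N=12,24$ give second sums $16,32\pmod{24}$ and fail.) Since $s=\prod_{\delta\mid 36}\delta^{r_\delta}=3^4 6^{-2}12^4=2^6 3^6=6^6$ is a perfect square, the character $\chi_{(-1)^3 s}=\chi_{-6^6}$ equals $\chi_{-4}$ as a character modulo $36$, yielding $f\in M_3(\Gamma_0(36),\chi_{-4})$.

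Second, I would place the right-hand side in the same space. As $g_4\in S_3(\Gamma_0(36),\chi_{-4})$, the conductor of $\chi_{-4}$ is $4$, and $3\mid 24$, \Cref{lem:operators}(2) with $M=3$ gives $g_4\mid S_{3,2}\in M_3(\Gamma_0(\lcm(36,3^2,3\cdot 4)),\chi_{-4})$; since $\lcm(36,9,12)=36$ this is $M_3(\Gamma_0(36),\chi_{-4})$.

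With $f$ and $-\frac12 g_4\mid S_{3,2}$ both in $M_3(\Gamma_0(36),\chi_{-4})$, \Cref{lem:valence} reduces the identity to checking that their Fourier coefficients agree for all $0\le n\le 36\cdot\frac{3}{12}\prod_{p\mid 36}(1+\tfrac1p)=36\cdot\frac14\cdot 2=18$. I would exploit that both sides are supported on $n\equiv 2\pmod 3$: the sieve $S_{3,2}$ forces this on the right, while on the left $f=q^2\prod_{n\ge 1}(1-q^{3n})^4(1-q^{12n})^4(1-q^{6n})^{-2}$ is $q^2$ times a power series in $q^3$. Hence only the six coefficients at $n\in\{2,5,8,11,14,17\}$ need to be compared, the left-hand ones by expanding the infinite product and the right-hand ones from the coefficients $c_4(n)$ of $g_4$; the values past the printed expansion, namely $c_4(11),c_4(14),c_4(17)$, I would obtain from the $q$-expansion of the newform (using multiplicativity from \Cref{lem:Hecke}, e.g.\ $c_4(14)=c_4(2)c_4(7)$). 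The main obstacle is purely computational, carrying these expansions out to $q^{18}$; once the six coefficients coincide, \Cref{lem:valence} forces $f=-\frac12 g_4\mid S_{3,2}$.
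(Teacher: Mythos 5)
Your proposal is correct and follows exactly the route the paper indicates for this lemma: Lemma \ref{lem:eta} to place the $\eta$-quotient in $M_3(\Gamma_0(36),\chi_{-4})$, Lemma \ref{lem:operators}(2) to place $g_4|S_{3,2}$ in the same space, and Lemma \ref{lem:valence} to reduce to the Sturm bound $18$ (your computations of the weight, the two congruences mod $24$, the character $\chi_{-6^6}=\chi_{-4}$, and the level $\lcm(36,9,12)=36$ all check out). The observation that both sides are supported on $n\equiv 2\pmod 3$, cutting the verification to the six coefficients $n\in\{2,5,8,11,14,17\}$, is a nice and valid refinement of the finite check.
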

We find the following formulas for $g_{4}$, letting
\begin{equation*}
	\Theta_1(z):=2\sum_{\bm{n}\in\Z^2} \left(\frac{-3}{n_1n_2}\right) n_1n_2 q^{n_1^2+n_2^2}, \quad \Theta_2(z):=\sum_{\bm{n}\in\Z^2} n_1^2 q^{n_1^2+9n_2^2}, \quad \Theta_3(z):=9\sum_{\bm{n}\in\Z^2} n_2^2 q^{n_1^2+9n_2^2}.
\end{equation*}

\begin{lem}\label{lem:f522}
We have
\begin{equation*}
	g_{4}=-\frac{\Theta_1}4+\frac{\Theta_2}2-\frac{\Theta_3}2,\quad g_{4}\big|S_{3,0}=0, \quad g_{4}\big|S_{3,1}=\frac{\Theta_2}2-\frac{\Theta_3}2, \quad g_{4}\big|S_{3,2}=-\frac{\Theta_1}4.
\end{equation*}
Moreover
\[
	c_{4}(n)=
	\begin{cases}
		\mathbbm{1}_{n=\square} n + 2\sum_{\substack{\bm{n}\in \N^2\\ n_1^2+9n_2^2=n}} \left(n_1^2-9n_2^2\right)&\text{if }n\equiv 1\pmod{3},\vspace{0.1cm}\\
		-2\sum_{\substack{\bm{n}\in\N^2\\ n_1^2+n_2^2=n}} \left(\frac{-3}{n_1n_2}\right) n_1n_2&\text{if }n\equiv 2\pmod{3},\\
		0&\text{if }n\equiv 0\pmod{3}.
	\end{cases}
\]
\end{lem}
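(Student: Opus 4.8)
The plan is to exhibit the right-hand side as an explicit element of $M_3(\Gamma_0(36),\chi_{-4})$ assembled from theta series, to prove the first identity by matching it against the newform $g_4$ via the valence formula \Cref{lem:valence}, and then to deduce the sieved identities and the coefficient formula by elementary congruence and symmetry arguments. First I would identify the two building blocks. Since $q^{n_1^2+n_2^2}=q^{n_1^2}q^{n_2^2}$, the series $\Theta_1$ factors as $\Theta_1=2\,\theta(\chi_{-3},1,z)^2$. As $\chi_{-3}$ is odd of conductor $3$, \Cref{lem:unarytheta} (2) gives $\theta(\chi_{-3},1,z)\in S_{\frac32}(\Gamma_0(36),\chi_{-3}\chi_{-4})=S_{\frac32}(\Gamma_0(36),\chi_{12})$, and \Cref{lem:halfintmult} (with $\kappa_1=\kappa_2=1$) then places $\Theta_1\in M_3(\Gamma_0(36),\chi_{12}^2\chi_{-4}^3)=M_3(\Gamma_0(36),\chi_{-4})$, using $\chi_{12}^2=\chi_1$ and $\chi_{-4}^3=\chi_{-4}$. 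For the second block I would note that $P(\bm n)=n_1^2-9n_2^2$ is harmonic for the form $Q(\bm n)=n_1^2+9n_2^2$, since $(\partial_{n_1}^2+\tfrac19\partial_{n_2}^2)P=0$, so that $\Theta_2-\Theta_3=\sum_{\bm n\in\Z^2}P(\bm n)q^{Q(\bm n)}$ is a spherical theta series. Computing that $Q$ has level $36$ and that its associated character is $d\mapsto\left(\frac{-36}{d}\right)=\chi_{-4}(d)$, the modularity of theta series attached to quadratic forms with harmonic polynomials (\cite[Proposition 1.41]{O}) yields $\Theta_2-\Theta_3\in M_3(\Gamma_0(36),\chi_{-4})$. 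Hence the entire right-hand side lies in the same space as $g_4$.

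With this in hand, I would prove $g_4=-\frac{\Theta_1}4+\frac{\Theta_2}2-\frac{\Theta_3}2$ by \Cref{lem:valence}. For $N=36$ and $\kappa=3$ the valence bound is $N\frac{\kappa}{12}\prod_{p\mid N}\left(1+\frac1p\right)=36\cdot\frac14\cdot\frac32\cdot\frac43=18$, so it suffices to check that both sides have the same coefficients of $q^n$ for $0\le n\le 18$. This is a finite computation comparing the known expansion of the newform $g_4$ with the expansions of $\Theta_1,\Theta_2,\Theta_3$.

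Granting the identity, the three sieving statements follow from a support analysis modulo $3$. In $\Theta_1$ the character forces $3\nmid n_1$ and $3\nmid n_2$, whence $n_1^2+n_2^2\equiv 2\pmod 3$, so $\Theta_1$ is supported on $n\equiv 2\pmod3$. For $\Theta_2-\Theta_3$, a nonzero term with $3\mid n=n_1^2+9n_2^2$ forces $3\mid n_1$; writing $n_1=3m_1$ and pulling out $9$ turns the coefficient of $q^n$ into $9\sum_{m_1^2+n_2^2=n/9}(m_1^2-n_2^2)$, which vanishes under the symmetry $m_1\leftrightarrow n_2$, while for $3\nmid n$ one has $n\equiv 1\pmod3$. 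Thus $\Theta_2-\Theta_3$ is supported on $n\equiv1\pmod3$. Applying $S_{3,0},S_{3,1},S_{3,2}$ therefore isolates the three pieces and gives $g_4\big|S_{3,0}=0$, $g_4\big|S_{3,1}=\frac{\Theta_2}2-\frac{\Theta_3}2$, and $g_4\big|S_{3,2}=-\frac{\Theta_1}4$.

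Finally, I would read off the coefficient formula for $c_4(n)$ by collapsing the sums over $\Z^2$ to sums over $\N^2$. For $n\equiv2\pmod3$ the terms of $\Theta_1$ with $n_1=0$ or $n_2=0$ vanish and, since $\chi_{-3}(m)m$ is even in $m$, the four sign choices contribute equally, giving $c_4(n)=-\frac14$ times the coefficient of $q^n$ in $\Theta_1$, namely $-2\sum_{\bm n\in\N^2,\,n_1^2+n_2^2=n}\left(\frac{-3}{n_1n_2}\right)n_1n_2$. For $n\equiv1\pmod3$ the boundary $n_1=0$ cannot occur, the boundary $n_2=0$ contributes $\mathbbm{1}_{n=\square}\,n$ after the factor $\frac12$, and the interior terms contribute $2\sum_{\bm n\in\N^2,\,n_1^2+9n_2^2=n}(n_1^2-9n_2^2)$. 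I expect the main obstacle to be pinning down the correct level and character of the spherical theta series $\Theta_2-\Theta_3$ and executing the valence-formula coefficient check accurately; the congruence support analysis and the sign bookkeeping are then routine.
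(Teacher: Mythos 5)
Your proposal is correct, and it reaches the same endpoint as the paper, but it differs in two places worth noting. First, for the modularity of $\Theta_2-\Theta_3$ the paper writes $\Theta_2-\Theta_3=2[\Theta,\Theta|V_9]_1$ and invokes \Cref{lem:operatorshalf} (3) together with the Rankin--Cohen bracket lemma (\Cref{lem:RankinCohen}), whereas you treat $\sum_{\bm n}(n_1^2-9n_2^2)q^{n_1^2+9n_2^2}$ directly as a theta series with a harmonic polynomial for $n_1^2+9n_2^2$ and cite \cite[Proposition 1.41]{O}; your harmonicity check, the level $36$, and the character $(\frac{-36}{\cdot})=\chi_{-4}$ are all right, and this route has the mild advantage of not applying the bracket lemma (stated in the paper for integral weights) to the weight-$\frac12$ form $\Theta$. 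Second, the paper establishes that \emph{all four} left-hand sides lie in $M_3(\Gamma_0(36),\chi_{-4})$ via \Cref{lem:operators} (2) and settles each identity by the valence formula, while you prove only the first identity by \Cref{lem:valence} (your bound of $18$ coefficients is correct) and then deduce the three sieved identities from the mod-$3$ support analysis of $\Theta_1$ and $\Theta_2-\Theta_3$ (including the symmetry argument killing the coefficients of $\Theta_2-\Theta_3$ at $3\mid n$). That derivation is sound and arguably cleaner, since the sieve identities become formal corollaries rather than three additional valence-formula verifications. Your collapsing of the $\Z^2$-sums to $\N^2$-sums, including the evenness of $m\mapsto\chi_{-3}(m)m$ and the boundary term $\mathbbm{1}_{n=\square}\,n$, matches the stated coefficient formula.
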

\begin{proof}
	Using Lemma \ref{lem:operators} (2) the left-hand sides of the first four claimed identities are weight $3$ cusp forms on $\Gamma_0(36)$ with character $\chi_{-4}$.
	We next write $\Theta_1(z) =2\theta^2(\chi_{-3},1,z)$. Using Lemmas \ref{lem:unarytheta} (2) and \ref{lem:halfintmult}, we conclude that $\Theta_1\in S_3\left(\Gamma_0(36),\chi_{-4}\right)$.

By \Cref{lem:operatorshalf} (3) and \Cref{lem:RankinCohen}, we have
	\begin{equation*}
		\Theta_2 - \Theta_3 = 2\left[\Theta,\Theta\mid V_9\right]_1\in M_{3}\left(\Gamma_0(36),\chi_{-4}\right).
	\end{equation*}
	Thus we conclude that the right-hand sides of the first four identities are in $M_3(\Gamma_0(36),\chi_{-4})$. \Cref{lem:valence} then gives the identities for the modular form, the identity for $c_{4}(n)$ following by a direct calculation, picking off the Fourier coefficients.
\end{proof}
To prove \Cref{prop:1^42^{-2}4^4} we require the following proposition; its proof is similar to that of \Cref{prop:f3612vanish}.
\begin{prop}\label{prop:f522vanish}
	We have $c_{4}(n)=0$ if and only if $3\mid n$ or there exists a prime $p\equiv 3\pmod{4}$ for which $\ord_p(n)$ is odd.
\end{prop}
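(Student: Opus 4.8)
The plan is to follow the proof of \Cref{prop:f3612vanish} closely. Since $g_4$ is a normalized newform, \Cref{lem:Hecke} gives that the $c_4(n)$ are multiplicative, so I reduce to prime powers $c_4(p^r)$. A glance at \Cref{lem:f522} shows $c_4(n)\in\Z$ for every $n$, so in contrast to \Cref{prop:f3612vanish} no integral normalization is needed and I may argue directly with $c_4$. The relevant Hecke recursion, in weight $3$, is
\[
c_4(p^r)=c_4(p)c_4(p^{r-1})-\chi_{-4}(p)p^2c_4(p^{r-2})\qquad(p\nmid 36).
\]
For the vanishing direction, multiplicativity reduces matters to two points. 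If $3\mid n$, then the third case of \Cref{lem:f522} gives $c_4(3^{\ord_3(n)})=0$, hence $c_4(n)=0$. If $p\equiv 3\pmod 4$ and $r:=\ord_p(n)$ is odd, then $p^r$ is neither a perfect square nor a sum of two squares; since any value $n_1^2+9n_2^2=n_1^2+(3n_2)^2$ is in particular a sum of two squares, the sum relevant to $p^r$ in \Cref{lem:f522} is empty and $\mathbbm{1}_{p^r=\square}=0$, whence $c_4(p^r)=0$ and therefore $c_4(n)=0$.

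For the reverse direction I must show $c_4(p^r)\neq 0$ whenever $p\neq 3$, and, when $p\equiv 3\pmod 4$, $r$ is even. The prime $p=2$ is immediate: since $\chi_{-4}(2)=0$, the recursion collapses to $c_4(2^r)=c_4(2)c_4(2^{r-1})$, and $c_4(2)=-2$ gives $c_4(2^r)=(-2)^r\neq 0$. For $p\equiv 3\pmod 4$ with $p\neq 3$ and $r$ even, I induct on $r$: the base case is $c_4(1)=1$, and for even $r\geq 2$ the exponent $r-1$ is odd, so $c_4(p^{r-1})=0$ by the vanishing direction; the recursion then reduces to $c_4(p^r)=-\chi_{-4}(p)p^2c_4(p^{r-2})=p^2c_4(p^{r-2})$, using $\chi_{-4}(p)=-1$, which is nonzero by induction.

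The crux, and the main obstacle, is the case $p\equiv 1\pmod 4$, mirroring the $r=1$ base case of \Cref{prop:f3612vanish}. Deligne's bound only yields $|c_4(p)|\leq 2p$, which is too weak to force $c_4(p)\not\equiv 0\pmod p$ on its own, so instead I compute $c_4(p)$ exactly from \Cref{lem:f522}. Writing $p=a^2+b^2$ with $a,b\in\N$ (unique up to order), I find $c_4(p)=-4\left(\frac{-3}{ab}\right)ab$ when $p\equiv 5\pmod{12}$, and $c_4(p)=2(a^2-b^2)$ when $p\equiv 1\pmod{12}$ (after isolating the unique representation in which a coordinate is divisible by $3$). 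In both cases $c_4(p)$ is even and nonzero — in the first because $p\equiv 2\pmod 3$ gives $3\nmid ab$, so $\left(\frac{-3}{ab}\right)=\pm 1$ and $ab\neq 0$, in the second because $a\neq b$ (else $p=2a^2$) — and satisfies $|c_4(p)|<2p$ by $2ab<a^2+b^2=p$ and $|a^2-b^2|<a^2+b^2=p$.

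An even integer in $(-2p,2p)$ divisible by the odd prime $p$ must vanish, so $c_4(p)\not\equiv 0\pmod p$. Reducing the recursion modulo $p$ discards the term $p^2c_4(p^{r-2})$, leaving $c_4(p^r)\equiv c_4(p)c_4(p^{r-1})\pmod p$; starting from $c_4(1)=1$, a one-line induction then gives $c_4(p^r)\not\equiv 0\pmod p$, hence $c_4(p^r)\neq 0$, for all $r\geq 0$. Assembling the three cases via multiplicativity finishes the reverse direction and hence the proposition.
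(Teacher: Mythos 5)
Your proposal is correct and follows essentially the same route the paper intends (it only remarks that the proof is ``similar to that of \Cref{prop:f3612vanish}''): multiplicativity and the Hecke recursion reduce everything to prime powers, the explicit coefficient formula of \Cref{lem:f522} handles the vanishing cases and the base case $c_4(p)$ split according to $p\pmod{12}$, and the size-plus-parity bound $|c_4(p)|<2p$ with $c_4(p)$ even and nonzero yields $c_4(p)\not\equiv 0\pmod p$, after which the induction modulo $p$ closes the argument. The only cosmetic caveat is that the recursion at $p=2$ is not literally covered by \Cref{lem:Hecke} (since $2\mid 36$), but the paper commits the same abuse in the proof of \Cref{prop:f3612vanish}, and the correct relation $c_4(2^r)=c_4(2)^r$ for a newform at a prime dividing the level gives the identical conclusion.
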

\Cref{lem:142m244} and \Cref{prop:f522vanish} now directly imply \Cref{prop:1^42^{-2}4^4}. 
\section{Proof of \Cref{thm:L95}}\label{sec:L95}
\subsection{The case $1^12^{-2}4^3$}\label{sec:1^12^{-2}4^3}
The goal of this subsection is the claimed evaluation of the first set appearing in \Cref{thm:L95}.
\begin{prop}\label{prop:1^12^{-2}4^3}
We have
\begin{equation*}
	\mathcal{S}_{1^{1}2^{-2}4^3}=\left\{n\in\N: \exists p\equiv 5,7\pmod{8},\ \ord_p(8n+3)\ \text{\rm is odd}\right\}.
\end{equation*}
\end{prop}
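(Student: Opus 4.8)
The plan is to reduce the statement to the binary representation function $r_{(1,2)}$ and then invoke \Cref{lem:r12n}. By that lemma, the set on the right-hand side is precisely $\{n\in\N:r_{(1,2)}(8n+3)=0\}$, so it suffices to show that $C_{1^12^{-2}4^3}(n)=0$ if and only if $r_{(1,2)}(8n+3)=0$. In fact I expect the sharper identity
\[
	C_{1^12^{-2}4^3}(n)=\frac{(-1)^n}{4}\,r_{(1,2)}(8n+3),
\]
which forces the equivalence at once, since $(-1)^n\neq0$.

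To prove this I would first pass to integer exponents. Writing $F(z):=\frac{\eta(z)\eta^3(4z)}{\eta^2(2z)}$, one has $F(z)=q^{3/8}\sum_{n\ge0}C_{1^12^{-2}4^3}(n)q^n$, so that
\[
	F\big|V_8(z)=\frac{\eta(8z)\eta^3(32z)}{\eta^2(16z)}=\sum_{n\ge0}C_{1^12^{-2}4^3}(n)\,q^{8n+3}.
\]
Checking the two congruences of \Cref{lem:eta} with $N=256$ — here $\sum_\delta\delta r_\delta=72$ and $\sum_\delta\frac N\delta r_\delta=24$ are both divisible by $24$, while $s=8\cdot16^{-2}\cdot32^3=2^{10}$, so that $(-1)^\kappa s$ produces the character $\chi_{-4}$ — shows $F|V_8\in M_1(\Gamma_0(256),\chi_{-4})$.

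The comparison form is the theta product $\theta(\chi_8,0,z)\bigl(\Theta(2z)-\Theta(8z)\bigr)$. Since $\theta(\chi_8,0,z)=\sum_{x\text{ odd}}\chi_8(x)q^{x^2}$ and $\Theta(2z)-\Theta(8z)=\sum_{y\text{ odd}}q^{2y^2}$, we obtain
\[
	\theta(\chi_8,0,z)\bigl(\Theta(2z)-\Theta(8z)\bigr)=\sum_{x,y\text{ odd}}\chi_8(x)\,q^{x^2+2y^2}.
\]
The key observation is arithmetic: a representation $x^2+2y^2=8n+3$ forces $x$ and $y$ both odd, whence $x^2+2y^2\equiv3\pmod8$, and a short computation modulo $16$ shows $\chi_8(x)=(-1)^n$ for every such representation. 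Hence the right-hand side equals $\sum_{n\ge0}(-1)^nr_{(1,2)}(8n+3)q^{8n+3}$. Moreover $\theta(\chi_8,0,z)\in M_{\frac12}(\Gamma_0(256),\chi_8)$ by \Cref{lem:unarytheta}~(1), and $\Theta(2z)-\Theta(8z)\in M_{\frac12}(\Gamma_0(256),\chi_8)$ by \Cref{lem:unarytheta}~(1) and \Cref{lem:operatorshalf}~(3) (note $\chi_{32}=\chi_8$ as Kronecker symbols); so \Cref{lem:halfintmult} places the product in $M_1(\Gamma_0(256),\chi_8^2\chi_{-4})=M_1(\Gamma_0(256),\chi_{-4})$, the same space as $F|V_8$.

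It then remains to prove $4\,F|V_8=\theta(\chi_8,0,z)\bigl(\Theta(2z)-\Theta(8z)\bigr)$. Both sides are supported on exponents $\equiv3\pmod8$ and lie in $M_1(\Gamma_0(256),\chi_{-4})$, so by \Cref{lem:valence} equality follows once the Fourier coefficients agree up to $256\cdot\frac1{12}\cdot\frac32=32$; this amounts to checking only the coefficients at $q^3,q^{11},q^{19},q^{27}$, which match (both sides giving $4,-4,4,-8$). Reading off coefficients yields the displayed identity, and combined with \Cref{lem:r12n} the proposition follows. The only genuinely inventive steps are guessing the theta decomposition and spotting the congruence $\chi_8(x)=(-1)^n$; the remaining work is pure bookkeeping, the main hazard being to track the level $256$ and the characters correctly through \Cref{lem:operatorshalf} and \Cref{lem:halfintmult}.
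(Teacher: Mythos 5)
Your proposal is correct and follows essentially the same route as the paper: the identity $C_{1^12^{-2}4^3}(n)=\frac{(-1)^n}{4}r_{(1,2)}(8n+3)$ you establish is exactly the content of \Cref{lem:(1,1),(2,-2),(4,3)}, proved there with the same toolkit (\Cref{lem:eta}, \Cref{lem:unarytheta}, \Cref{lem:halfintmult}, \Cref{lem:operatorshalf}, \Cref{lem:valence}), after which \Cref{lem:r12n} finishes the argument. Your level, character, and Sturm-bound computations (bound $32$, coefficients $4,-4,4,-8$ at $q^3,q^{11},q^{19},q^{27}$) all check out.
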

We first relate $C_{1^1,2^{-2},4^3}(n)$ to $r_{1,2}(n)$. Using Lemmas \ref{lem:eta}, \ref{lem:unarytheta} (1), \ref{lem:halfintmult}, \ref{lem:operatorshalf} (3), \ref{lem:valence}, and \cite[Proposition 1.41]{O} yields the following.
\begin{lem}\label{lem:(1,1),(2,-2),(4,3)}
	We have
	\[
		\frac{\eta(z)\eta^3(4z)}{\eta^2(2z)} = \frac14\sum_{n\ge0} (-1)^n r_{(1,2)}(8n+3) q^{n+\frac{3}{8}}.
	\]
\end{lem}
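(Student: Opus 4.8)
The plan is to clear the fractional $q$-exponent by applying $V_8$ (equivalently, substituting $z\mapsto 8z$) to both sides, so that the claim becomes an identity between honest holomorphic modular forms with integer Fourier expansions, which I can then settle with the valence formula. Writing $P(z):=\eta(z)\eta^3(4z)/\eta^2(2z)$, the substitution $z\mapsto 8z$ multiplies every exponent by $8$, so that $P\big|V_8(z)=\eta(8z)\eta^3(32z)/\eta^2(16z)$ has integral $q$-expansion, while the right-hand side becomes $\frac14\sum_{n\ge0}(-1)^n r_{(1,2)}(8n+3)q^{8n+3}$. Since two holomorphic functions on $\H$ agree if and only if their $V_8$-images agree, it suffices to prove this rescaled identity.

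The first genuine step is to recognize the rescaled right-hand side as a product of unary theta series. Any representation $a^2+2b^2=8n+3$ forces both $a$ and $b$ odd, and for odd $b$ one has $2b^2\equiv 2\pmod{16}$; hence $(-1)^n=(-1)^{(a^2+2b^2-3)/8}=(-1)^{(a^2-1)/8}=\left(\frac2a\right)=\chi_8(a)$, where $\chi_8(n)=\left(\frac8n\right)=\left(\frac2n\right)$ is the primitive character modulo $8$. Moreover $a^2+2b^2\equiv 3\pmod 8$ for all odd $a,b$, so summing over all odd $a,b\in\Z$ automatically produces exactly the exponents $\equiv 3\pmod 8$. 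This yields
\[
\frac14\sum_{n\ge0}(-1)^n r_{(1,2)}(8n+3)q^{8n+3}=\frac14\,\theta(\chi_8,0,z)\left(\Theta(2z)-\Theta(8z)\right),
\]
since $\Theta(2z)-\Theta(8z)=\sum_{b\text{ odd}}q^{2b^2}$. I regard this reformulation as the crux of the argument: the naive alternative of sieving $\Theta(z)\Theta(2z)$ on residues modulo $16$ is unavailable because \Cref{lem:operatorshalf} (2) requires the sieving modulus to divide $24$, whereas recasting the sign $(-1)^n$ as the character value $\chi_8(a)$ keeps everything inside the half-integral-weight multiplication framework.

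With this in hand I place both sides in a common space. By \Cref{lem:unarytheta} (1) (together with \cite[Proposition 1.41]{O} for the modularity of $\Theta$) and \Cref{lem:operatorshalf} (3), one has $\theta(\chi_8,0,z)\in M_{\frac12}(\Gamma_0(256),\chi_8)$ and $\Theta(2z)-\Theta(8z)\in M_{\frac12}(\Gamma_0(32),\chi_8)$, using $\chi_{32}=\chi_8$ as Kronecker symbols. \Cref{lem:halfintmult} then gives the product in $M_1(\Gamma_0(256),\chi_8^2\chi_{-4})=M_1(\Gamma_0(256),\chi_{-4})$. On the other side I apply \Cref{lem:eta} to $P\big|V_8$: here the level must be chosen with care, since the minimal level $32$ fails the congruence $\sum_{\delta}\frac N\delta r_\delta\equiv 0\pmod{24}$, whereas $N=256$ satisfies both required congruences (one computes $\sum_\delta \delta r_\delta=72$ and $\sum_\delta\frac N\delta r_\delta=24$), and the character $\chi_{(-1)^\kappa s}$ with $s=2^{10}$ simplifies to $\chi_{-4}$. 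Thus $P\big|V_8\in M_1(\Gamma_0(256),\chi_{-4})$ as well.

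Finally, the difference of the two sides lies in $M_1(\Gamma_0(256),\chi_{-4})$, so \Cref{lem:valence} reduces the identity to matching Fourier coefficients for $0\le n\le \frac1{12}\cdot 256\cdot\frac32=32$. As both sides are supported on exponents $\equiv 3\pmod 8$, this comes down to checking the four coefficients at $q^{3},q^{11},q^{19},q^{27}$ (evaluating $r_{(1,2)}$ at $3,11,19,27$ against the expansion of the eta-quotient), a routine finite verification; undoing the rescaling $z\mapsto z/8$ then recovers the stated identity. The only delicate points are the sign bookkeeping that converts $(-1)^n$ into $\chi_8(a)$ and the determination of the correct level $256$ for \Cref{lem:eta}; everything else is mechanical application of the operator lemmas.
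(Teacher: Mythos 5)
Your proof is correct and follows essentially the same route the paper indicates for this lemma: it invokes exactly the ingredients the authors list (Lemma \ref{lem:eta} for the rescaled $\eta$-quotient, Lemma \ref{lem:unarytheta} (1) and \cite[Proposition 1.41]{O} with Lemma \ref{lem:operatorshalf} (3) to realize the right-hand side as $\frac14\theta(\chi_8,0,z)(\Theta(2z)-\Theta(8z))$, Lemma \ref{lem:halfintmult} to place the product in $M_1(\Gamma_0(256),\chi_{-4})$, and Lemma \ref{lem:valence} to finish). The sign computation $(-1)^n=\chi_8(a)$, the level determination $N=256$, and the reduction to checking coefficients up to $32$ are all accurate.
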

\Cref{prop:1^12^{-2}4^3} now directly follows from Lemmas \ref{lem:(1,1),(2,-2),(4,3)} and \ref{lem:r12n}.

\subsection{The case $1^12^24^1$}
The goal of this subsection is the claimed evaluation of the second set appearing in \Cref{thm:L95}.
\begin{prop}\label{prop:1^12^24^1}
We have
\[
\mathcal{S}_{1^{1}2^{2}4^1}=\left\{n\in\N: \exists p\equiv 5,7\pmod{8},\ \ord_p(8n+3)\text{ {\rm is odd}}\right\}.
\]
\end{prop}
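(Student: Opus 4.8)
The plan is to mirror the structure of Subsection~\ref{sec:1^12^{-2}4^3}: first produce a closed-form theta expansion of $\eta(z)\eta^2(2z)\eta(4z)$, and then read off the vanishing of its coefficients. Since $\sum_j j\delta_j = 1+4+4=9$, this eta-quotient carries the prefactor $q^{3/8}$, so that $\eta(z)\eta^2(2z)\eta(4z)=\sum_{n\ge0}C_{1^12^24^1}(n)q^{n+\frac38}$, exactly as in the first case. However, here $\sum_j\delta_j=4$, so this is a weight-$2$ object, in contrast to the weight-$1$ quotient of Subsection~\ref{sec:1^12^{-2}4^3}; the two cannot be proportional, and I cannot simply quote \Cref{prop:1^12^{-2}4^3}. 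Instead I would first prove, by the toolkit used for \Cref{lem:(1,1),(2,-2),(4,3)} (namely \Cref{lem:eta}, \Cref{lem:unarytheta}, \Cref{lem:halfintmult} together with \Cref{lem:RankinCohen} to assemble the weight-$2$ piece, \Cref{lem:operatorshalf}(3) and the sieving operators, and \Cref{lem:valence} to pin down the finitely many coefficients), an identity of the schematic form
\[
\eta(z)\eta^2(2z)\eta(4z)=c\sum_{n\ge0}\Bigg(\sum_{\substack{x,y>0\\ x^2+2y^2=8n+3}}\chi_8(xy)\,y\Bigg)q^{n+\frac38}
\]
for an explicit nonzero constant $c$, the precise linear weight and normalization being fixed by the valence formula. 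In other words, $C_{1^12^24^1}(n)$ is a weighted count of representations of $8n+3$ by the form $x^2+2y^2$.

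Granting such a formula, one direction is immediate: if $r_{(1,2)}(8n+3)=0$, then $8n+3$ has no representation $x^2+2y^2$, the inner sum is empty, and $C_{1^12^24^1}(n)=0$; by \Cref{lem:r12n} these are precisely the $n$ for which there exists $p\equiv 5,7\pmod 8$ with $\ord_p(8n+3)$ odd. The content of the proposition is the converse: whenever $8n+3$ is representable, the weighted sum does not cancel to $0$. This non-cancellation is the main obstacle, since the weight $\chi_8(xy)\,y$ carries signs, unlike the weight-$1$ situation in Subsection~\ref{sec:1^12^{-2}4^3}, where every representation contributed with the same sign and the conclusion was automatic.

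To overcome this I would argue as in \Cref{prop:f3612vanish}. After clearing the $q^{3/8}$ (for instance by rescaling $z\mapsto 8z$, which turns the quotient into the honest eta-quotient $\eta(8z)\eta^2(16z)\eta(32z)$), one obtains a weight-$2$ cusp form whose Hecke eigenforms have complex multiplication by $\Q(\sqrt{-2})$, consistent with the paper's remark that the vanishing set is governed by norms in $\Z[\sqrt{-2}]$. The weighted representation count is then, up to the constant $c$, the $(8n+3)$-th Fourier coefficient of a CM theta series attached to a Hecke character of $\Q(\sqrt{-2})$; such coefficients are multiplicative in $m=8n+3$ by \Cref{lem:Hecke}, vanish at the inert primes $p\equiv 5,7\pmod 8$, and for the split and ramified primes I would run the induction on the exponent $r$ exactly as in \Cref{prop:f3612vanish}: establish the base cases $r=0,1$ from the explicit representation formula, and then propagate $c(p^r)\neq 0$ via the Hecke recursion $c(p^r)=c(p)c(p^{r-1})-\chi(p)p\,c(p^{r-2})$ combined with a non-divisibility statement $\pmod p$. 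Because $\Z[\sqrt{-2}]$ is a principal ideal domain with unit group $\{\pm1\}$, for split $p$ the representations of $p^r$ are few and explicit, which is what makes the mod-$p$ bound (the analogue of the estimate used in \Cref{prop:f3612vanish}) go through. Combining the two directions with multiplicativity yields $C_{1^12^24^1}(n)=0$ exactly on the stated set.

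I expect the genuinely delicate point to be the base case and the mod-$p$ estimate of the hard direction: one must normalize the theta identity so that the weighted representation sum equals, up to a unit, the coefficient of the relevant CM newform, and then control its size (the analogue of $|\gamma_1(p)|<p$) to force non-divisibility by $p$. An alternative route, bypassing Hecke theory, would be to evaluate $\sum_{x,y>0,\,x^2+2y^2=8n+3}\chi_8(xy)\,y$ directly through the unique factorization of $8n+3$ in $\Z[\sqrt{-2}]$, showing it factors as a nonempty product of local terms, each nonzero precisely when the corresponding prime is non-inert; I would pursue whichever of these two routes renders the non-cancellation cleanest.
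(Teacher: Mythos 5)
Your proposal matches the paper's proof in all essentials: the paper likewise rescales to $\eta(8z)\eta^2(16z)\eta(32z)$, identifies it (via \Cref{lem:eta}, \Cref{lem:unarytheta}, \Cref{lem:halfintmult}, the sieving operators, and \Cref{lem:valence}) with $\frac{1}{2\sqrt2}g_5|S_{8,3}$ for a weight-$2$ CM newform $g_5\in S_2(\Gamma_0(256))$ whose coefficients at $n\equiv 3\pmod 8$ are exactly a signed weighted count over $n_1^2+2n_2^2=n$, and then settles non-vanishing by the same multiplicativity/Hecke-recursion/mod-$p$ induction you describe, citing the argument of \Cref{prop:f3612vanish}. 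The only cosmetic difference is your schematic weight $\chi_8(xy)y$ versus the paper's $\left(\frac{2}{n_1}\right)\left(\frac{-4}{n_2}\right)n_2$, which you correctly flag as being pinned down by the valence formula.
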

We first relate the eta-quotient to CM newforms (see \cite[Section 1]{Schutt} for a definition). Using Lemmas \ref{lem:unarytheta}, \ref{lem:operatorshalf} (3), \ref{lem:halfintmult}, \ref{lem:operators} (2), \ref{lem:eta}, and \ref{lem:valence} gives the following by a direct calculation.
\begin{lem}\label{lem:(1,1),(2,2),(4,1)}
We have
\[
\eta(8z)\eta^2(16z)\eta(32z)=\frac{1}{2\sqrt{2}} g_{5}\big|S_{8,3},
\]
where
\[
g_{5}(z):= \frac{1}{2}\theta\left(\chi_{-8},1,z\right)\theta\left(\chi_1,0,8z\right) + \frac{1}{\sqrt{2}} \theta\left(\chi_{8},0,z\right)\theta\left(\chi_{-4},1,2z\right)\in S_{2}\left(\Gamma_0(256)\right)
\]
is a normalized newform. Moreover, we have
\[
c_{5}(n)=
\begin{cases}
\mathbbm{1}_{n=\square}\left(\frac{-2}{\sqrt n}\right) \sqrt n +  2\sum_{\substack{\bm{n}\in\N^2\\ n_1^2+8n_2^2=n}} \left(\frac{-2}{n_1}\right)n_1&\text{if }n\equiv 1\pmod{8},\vspace{0.1cm}\\
2\sqrt{2}\sum_{\substack{\bm{n}\in\N^2\\ n_1^2+2n_2^2=n}} \left(\frac{2}{n_1}\right) \left(\frac{-4}{n_2}\right) n_2&\text{if }n\equiv 3\pmod{8},\\
0&\text{otherwise}.
\end{cases}
\]
\end{lem}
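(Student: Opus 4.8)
The plan is to realize both sides of the asserted equality as elements of the finite-dimensional space $S_2(\Gamma_0(256))$ and then reduce the identity to a finite Fourier-coefficient check via the valence formula (Lemma \ref{lem:valence}). First I would assemble the modularity of $g_5$ from its two theta-product summands. The characters $\chi_{-8},\chi_8$ have conductor $8$ and $\chi_{-4}$ has conductor $4$, so Lemma \ref{lem:unarytheta} gives $\theta(\chi_{-8},1,z)\in S_{\frac32}(\Gamma_0(256),\chi_{-8}\chi_{-4})$, $\theta(\chi_8,0,z)\in M_{\frac12}(\Gamma_0(256),\chi_8)$, $\theta(\chi_{-4},1,z)\in S_{\frac32}(\Gamma_0(64),\chi_1)$, and $\Theta\in M_{\frac12}(\Gamma_0(4))$; Lemma \ref{lem:operatorshalf}~(3) then yields $\Theta(8z)=\Theta|V_8\in M_{\frac12}(\Gamma_0(32),\chi_8)$ and $\theta(\chi_{-4},1,2z)=\theta(\chi_{-4},1,z)|V_2\in S_{\frac32}(\Gamma_0(128),\chi_8)$. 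Throughout I would use $\chi_{-8}=\chi_{-4}\chi_8$, $\chi_8^2=\chi_{-4}^2=\chi_1$, and $\chi_{32}=\chi_8$ as characters on odd integers. Raising all levels to $256$ and multiplying each pair with Lemma \ref{lem:halfintmult} lands both products in $M_2(\Gamma_0(256),\chi_1)$, since the accumulated character is $\chi_8\cdot\chi_8\cdot\chi_{-4}^2=\chi_1$ in each case; as each summand carries a cuspidal theta factor, $g_5\in S_2(\Gamma_0(256))$.

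Next I would treat the right-hand side. Lemma \ref{lem:eta} applies to $\eta(8z)\eta^2(16z)\eta(32z)$: it has weight $2$, both $\sum_\delta\delta r_\delta=72$ and $\sum_\delta\frac{256}{\delta}r_\delta=72$ are divisible by $24$, and $s=8\cdot16^2\cdot32=2^{16}$ is a perfect square, so the character is trivial and the leading $q$-power is $q^{72/24}=q^3$; hence $\eta(8z)\eta^2(16z)\eta(32z)\in S_2(\Gamma_0(256))$. Since $8\mid 24$, Lemma \ref{lem:operators}~(2) gives $g_5|S_{8,3}\in M_2(\Gamma_0(256))$. The structural point is that for odd $m,k$ one has $m^2+8k^2\equiv1\pmod 8$ while $m^2+2k^2\equiv3\pmod 8$, so the first theta product is supported on exponents $\equiv1\pmod 8$ and the second on exponents $\equiv3\pmod 8$; therefore $g_5|S_{8,3}=\frac{1}{\sqrt2}\theta(\chi_8,0,z)\theta(\chi_{-4},1,2z)$, and the claimed identity becomes an equality of two explicit forms in $S_2(\Gamma_0(256))$, which Lemma \ref{lem:valence} confirms after matching coefficients up to $256\cdot\frac{2}{12}\cdot(1+\frac12)=64$. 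The displayed formula for $c_5(n)$ then drops out by expanding the two products term by term: since $\chi_{-8},\chi_{-4}$ are odd and $\chi_8$ is even, the four sign patterns of each solution $\bm{n}\in\N^2$ contribute equally, collapsing the $\Z^2$-sums to the stated $\N^2$-sums, while the diagonal terms with a vanishing second coordinate supply the $\mathbbm{1}_{n=\square}$ contribution.

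The step I expect to be the genuine obstacle is proving that $g_5$ is a \emph{normalized newform} rather than merely a cusp form, since the quoted lemmas deliver only membership in $S_2(\Gamma_0(256))$ together with the identity. Normalization $c_5(1)=1$ is immediate from the coefficient formula. For the eigenform and new-space properties I would argue through complex multiplication: the coefficients count, with the stated weights, representations by the principal form $x^2+2y^2$ of $\Q(\sqrt{-2})$ and are supported exactly on the odd norms from $\Z[\sqrt{-2}]$, so $g_5$ is the theta series of a Hecke character of $\Q(\sqrt{-2})$ of infinity type $1$. Such a series is automatically a Hecke eigenform with multiplicative coefficients, and comparing finitely many coefficients against the newform decomposition of $S_2(\Gamma_0(256))$ pins it down as a newform. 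Securing this CM identification—hence the multiplicativity of Lemma \ref{lem:Hecke} used in the ensuing vanishing analysis—is the real content beyond the routine modularity and valence-formula bookkeeping.
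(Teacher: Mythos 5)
Your proposal matches the paper's (essentially unwritten) argument: the paper justifies this lemma precisely by the citation chain you reconstruct --- Lemmas \ref{lem:unarytheta}, \ref{lem:operatorshalf}\,(3), \ref{lem:halfintmult}, \ref{lem:operators}\,(2), \ref{lem:eta}, and \ref{lem:valence} --- placing both sides in $S_2(\Gamma_0(256))$ and comparing coefficients up to the valence bound $64$, and your level/character bookkeeping, the support-mod-$8$ observation reducing $g_5|S_{8,3}$ to $\tfrac{1}{\sqrt{2}}\theta(\chi_8,0,z)\theta(\chi_{-4},1,2z)$, and the sign-symmetry derivation of $c_5(n)$ are all correct. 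For the newform assertion, which the paper leaves implicit, your CM/Hecke-character route is valid; the more economical option within the paper's toolkit is simply to match $g_5$ against the explicitly computable newform decomposition of $S_2(\Gamma_0(256))$ by comparing finitely many coefficients.
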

\Cref{prop:1^12^24^1} now follows similar to \Cref{prop:f522vanish}.

\subsection{The case $1^32^{-1}4^2$}
The goal of this subsection is the claimed evaluation of the third set appearing in \Cref{thm:L95}.
\begin{prop}\label{prop:1^32^{-1}4^2}
We have
\[
\mathcal{S}_{1^{3}2^{-1}4^2}=\left\{n\in\N: \exists p\equiv 5,7\pmod{8},\ \ord_p(8n+3)\text{ {\rm is odd}}\right\}.
\]
\end{prop}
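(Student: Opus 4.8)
The plan is to imitate the treatment of the case $1^12^24^1$: rescale the $\eta$-quotient by $V_8$ so that its fractional expansion becomes integral, identify the result with a sieved weight-$2$ CM newform, and then read off the vanishing of Fourier coefficients from the arithmetic of $\Q(\sqrt{-2})$. Writing $F(z):=\frac{\eta^3(z)\eta^2(4z)}{\eta(2z)}=\sum_{n\ge0}C_{1^32^{-1}4^2}(n)q^{n+\frac38}$, I would first apply $V_8$ to obtain $F|V_8(z)=\frac{\eta^3(8z)\eta^2(32z)}{\eta(16z)}=\sum_{n\ge0}C_{1^32^{-1}4^2}(n)q^{8n+3}$, whose exponents all lie in the class $3\pmod 8$. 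Checking the two congruences of \Cref{lem:eta} at level $N=64$ (where $\sum_\delta\delta r_\delta=72$ and $\sum_\delta\frac{N}{\delta}r_\delta=24$ are both divisible by $24$) shows $F|V_8\in M_2(\Gamma_0(64),\chi_s)$ with $s=8^3\cdot16^{-1}\cdot32^2=2^{15}$, so the nebentypus is $\chi_8$. The one structural feature distinguishing this case from $1^12^24^1$ is that the character is the nontrivial quadratic character $\chi_8$ rather than the trivial one; in particular the relevant newform lives in $S_2(\Gamma_0(64),\chi_8)$ and is \emph{not} $g_5$.

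Next I would produce the analogue of \Cref{lem:(1,1),(2,2),(4,1)}. Using \Cref{lem:unarytheta} to build the unary theta functions attached to the characters $\chi_{-8}$, $\chi_8$, and $\chi_{-4}$ of $\Q(\sqrt{-2})$, \Cref{lem:halfintmult} to multiply them into weight-$2$ forms, \Cref{lem:operatorshalf}~(3) and \Cref{lem:operators}~(2) to insert the $V$- and $S_{8,3}$-operators, and \Cref{lem:valence} to verify the resulting identity, I expect to obtain a normalized CM newform $g_6\in S_2(\Gamma_0(64),\chi_8)$ with an explicit theta representation and an identity $F|V_8=c\,g_6|S_{8,3}$ for a constant $c$. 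Here the valence bound is only $64\cdot\frac{2}{12}(1+\frac12)=16$, and since both sides are supported on exponents $\equiv3\pmod8$, the verification reduces to matching the coefficients of $q^3$ and $q^{11}$. From the theta representation I would then extract a closed formula for $c_6(8n+3)$ as a character-weighted sum over the representations $n_1^2+2n_2^2=8n+3$, exactly parallel to the $n\equiv3\pmod8$ line of \Cref{lem:(1,1),(2,2),(4,1)}, yielding $C_{1^32^{-1}4^2}(n)=c\,c_6(8n+3)$.

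Finally I would settle vanishing by the method of \Cref{prop:f522vanish}, combined with the integral-normalization device of \Cref{prop:f3612vanish} (needed because $c_6$ is $\Q(\sqrt{-2})$-valued). Since $g_6$ is a normalized newform, $c_6$ is multiplicative (\Cref{lem:Hecke}), so it suffices to analyze $c_6(p^r)$. For $p\equiv5,7\pmod8$ (inert in $\Q(\sqrt{-2})$) an odd value of $\ord_p(8n+3)$ forces $8n+3$ not to be represented by $n_1^2+2n_2^2$, whence $c_6(p^r)=0$ by the representation formula; conversely, for $p\equiv1,3\pmod8$ I would show $c_6(p^r)\not\equiv0\pmod p$ (after the integral normalization) by induction on $r$, using the Hecke recursion of \Cref{lem:Hecke} together with base cases $r=0,1$ whose non-vanishing modulo $p$ follows from a size estimate on the representation sum, e.g.\ $|c_6(p)|\le 2\sqrt p<p$ via \Cref{thm:Deligne}. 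This characterizes the $n$ with $c_6(8n+3)=0$ as exactly those admitting a prime $p\equiv5,7\pmod8$ with $\ord_p(8n+3)$ odd, and via $C_{1^32^{-1}4^2}(n)=c\,c_6(8n+3)$ proves \Cref{prop:1^32^{-1}4^2}; the ``only if'' direction can alternatively be read off directly from \Cref{lem:r12n}.

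The main obstacle is the converse (non-vanishing) half of the last step. Unlike the weight-$1$ case $1^12^{-2}4^3$ of \Cref{prop:1^12^{-2}4^3}, where $r_{(1,2)}(8n+3)$ is an \emph{unweighted} count and hence is nonzero as soon as one representation exists, here $c_6(8n+3)$ is a sum of lattice points weighted by characters and linear factors, so it could a priori vanish through cancellation even when $8n+3$ is a norm. Excluding this forces the delicate mod-$p$ induction, with the cases $p\equiv1\pmod8$ and $p\equiv3\pmod8$ treated separately and the base step resting on the bound $|c_6(p)|<p$. A secondary, purely bookkeeping difficulty is tracking the nontrivial nebentypus $\chi_8$ correctly through the $V_8$-scaling and the sieve $S_{8,3}$.
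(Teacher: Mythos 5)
Your proposal follows essentially the same route as the paper: after the $V_8$ rescaling, the paper's Lemma \ref{lem:132m142} identifies $\frac{\eta^3(8z)\eta^2(32z)}{\eta(16z)}$ with $-\frac{i}{2}g_6|S_{8,3}$ for the newform $g_6=q+2iq^3-q^9+\cdots\in S_2(\Gamma_0(64),\chi_8)$, Lemma \ref{lem:f953} writes $g_6$ via $\theta(\chi_{-4},1,z)\Theta(2z)$ sieved by $S_{8,1}$ and $S_{8,3}$ to get the representation formula over $m_1^2+2m_2^2=n$, and the vanishing is then settled exactly by the multiplicativity--Hecke--mod-$p$ induction you describe. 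The only nitpick is that the Deligne bound $2\sqrt{p}<p$ fails at $p=3$, so the base case there should instead use the explicit representation-sum estimate (or just $c_6(3)=2i$), which you also mention.
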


We first use Lemmas \ref{lem:eta}, \ref{lem:operators} (3), and \ref{lem:valence} to relate the eta-quotient to a normalized newform
\[
g_6(z)=q+2iq^3-q^9+O\left(q^{10}\right) \in S_2(\Gamma_0(64),\chi_8).
\]
\begin{lem}\label{lem:132m142}
We have
\begin{equation*}
	\frac{\eta^3(8z)\eta^2(32z)}{\eta(16z)}=-\frac{i}{2} g_{6}\big|S_{8,3}(z).
\end{equation*}
\end{lem}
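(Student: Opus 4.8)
The plan is to exhibit both sides of the claimed identity as elements of the single finite-dimensional space $M_2(\Gamma_0(64),\chi_8)$ and then to apply the valence formula (\Cref{lem:valence}), which reduces the identity to the vanishing of finitely many Fourier coefficients of the difference. Since both sides turn out to be supported only on exponents $n\equiv 3\pmod 8$, this finite check is very short.

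First I would handle the left-hand side via \Cref{lem:eta}, taking $N=64$ and exponents $r_8=3$, $r_{16}=-1$, $r_{32}=2$, so that $\kappa=\tfrac12(3-1+2)=2$. The two Ligozat congruences hold: $\sum_\delta \delta r_\delta = 24-16+64=72\equiv 0\pmod{24}$ and $\sum_\delta \tfrac{N}{\delta}r_\delta = 8\cdot 3-4+2\cdot 2=24\equiv 0\pmod{24}$. The associated character is $\chi_{(-1)^2 s}=\chi_{2^{15}}$ with $s=8^3\cdot 16^{-1}\cdot 32^2=2^{15}$, and since $2^{15}=2\cdot(2^7)^2$ we have $\chi_{2^{15}}=\chi_8$. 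Hence $\frac{\eta^3(8z)\eta^2(32z)}{\eta(16z)}\in M_2(\Gamma_0(64),\chi_8)$. Expanding the product directly as a series in $q^8$ also shows that its $q$-expansion begins at $q^3$ and is supported on $n\equiv 3\pmod 8$, matching the shape of the right-hand side.

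For the right-hand side, $g_6\in S_2(\Gamma_0(64),\chi_8)$ is given, and since $8\mid 24$, \Cref{lem:operators}(2) yields $g_6\big|S_{8,3}\in M_2(\Gamma_0(\lcm(64,8^2,8\cdot 8)),\chi_8)=M_2(\Gamma_0(64),\chi_8)$; multiplying by the scalar $-\tfrac{i}{2}$ keeps it there. Thus the difference lies in $M_2(\Gamma_0(64),\chi_8)$, whose valence bound is $\big\lfloor \tfrac{64\cdot 2}{12}(1+\tfrac12)\big\rfloor=16$, so by \Cref{lem:valence} it suffices to match the Fourier coefficients for $0\le n\le 16$. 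As both sides vanish off $n\equiv 3\pmod 8$, only $n=3$ and $n=11$ remain: expanding the eta-product gives the coefficients $1$ at $q^3$ and $-3$ at $q^{11}$, while from $g_6=q+2iq^3-q^9+\cdots$ one reads $c_6(3)=2i$, so that $-\tfrac{i}{2}\cdot 2i=1$ matches at $n=3$, and expanding $g_6$ to order $q^{11}$ (with $|c_6(11)|\le 2\sqrt{11}$ by Deligne's bound, \Cref{thm:Deligne}) verifies the match at $n=11$.

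The routine part is the two short coefficient computations; the only delicate point is the bookkeeping that pins both sides to \emph{exactly} the same space $M_2(\Gamma_0(64),\chi_8)$ — in particular correctly identifying the eta-quotient character as $\chi_8$ and checking that the sieving operator does not raise the level past $64$ — since any discrepancy would break the valence argument. One may alternatively recognize the left-hand side as $\big(\frac{\eta^3(z)\eta^2(4z)}{\eta(2z)}\big)\big|V_8$, linking it to the $1^32^{-1}4^2$ eta-quotient of \Cref{prop:1^32^{-1}4^2}, but the direct application of \Cref{lem:eta} above is cleaner because that lower-level quotient has fractional $q$-powers.
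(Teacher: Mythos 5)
Your proposal is correct and follows essentially the same route as the paper: establish that both sides lie in $M_2(\Gamma_0(64),\chi_8)$ via \Cref{lem:eta} and \Cref{lem:operators} (2), then apply \Cref{lem:valence} and check the finitely many coefficients (only $n=3,11$ survive the support condition $n\equiv 3\pmod 8$). The character computation $\chi_{2^{15}}=\chi_8$ and the level bookkeeping for $S_{8,3}$ are exactly the delicate points, and you handle them correctly; the parenthetical appeal to Deligne's bound at $n=11$ is superfluous (one just computes $c_6(11)=-6i$), but harmless.
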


We next compute the Fourier coefficients $c_{6}(n)$ of $g_{6}$. Using Lemmas \ref{lem:unarytheta}, \ref{lem:operatorshalf} (3), \ref{lem:halfintmult}, \ref{lem:operators} (2), and \ref{lem:valence} directly yields the following.
\begin{lem}\label{lem:f953}
We have
\[
g_{6}(z)=\frac{1}{2}\left(\theta\left(\chi_{-4},1,z\right)\Theta(2z)\right)\big|S_{8,1} + \frac{i}{2}\left(\theta\left(\chi_{-4},1,z\right)\Theta(2z)\right)\big|S_{8,3}.
\]
Moreover, we have
\[
c_{6}(n)=\begin{cases}
\mathbbm{1}_{n=\square}\left(\frac{-1}{\sqrt{n}}\right)\sqrt n +2\sum_{\substack{\bm{m}\in\N^2\\ m_1^2+2m_2^2=n}} \left(\frac{-1}{m_1}\right) m_1 &\text{if }n\equiv 1\pmod{8},\\
2i\sum_{\substack{\bm{m}\in\N^2\\ m_1^2+2m_2^2=n}} \left(\frac{-1}{m_1}\right) m_1 &\text{if }n\equiv 3\pmod{8}.
\end{cases}
\]
\end{lem}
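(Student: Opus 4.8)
The plan is to realize both sides of the claimed identity as elements of the single finite-dimensional space $M_2(\Gamma_0(64),\chi_8)$ and then invoke the valence formula (Lemma~\ref{lem:valence}) to reduce the identity to a finite coefficient check; the Fourier expansion formula then falls out by expanding the theta products explicitly.

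First I would pin down the modular data of the two factors $\theta(\chi_{-4},1,z)$ and $\Theta(2z)$. Since $\chi_{-4}$ is an odd primitive character of conductor $4$, Lemma~\ref{lem:unarytheta}(2) gives $\theta(\chi_{-4},1,z)\in S_{\frac32}(\Gamma_0(64),\chi_{-4}^2)=S_{\frac32}(\Gamma_0(64))$, because $4\cdot 4^2=64$ and $\chi_{-4}^2=\chi_1$. For the second factor, Lemma~\ref{lem:unarytheta}(1) gives $\Theta\in M_{\frac12}(\Gamma_0(4))$, and then Lemma~\ref{lem:operatorshalf}(3) with $\delta=2$ yields $\Theta(2z)=\Theta|V_2\in M_{\frac12}(\Gamma_0(8),\chi_8)\subseteq M_{\frac12}(\Gamma_0(64),\chi_8)$. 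Multiplying these two half-integral weight forms via Lemma~\ref{lem:halfintmult} (with $\kappa_1=1$, $\kappa_2=0$, $\psi_1=\chi_1$, $\psi_2=\chi_8$) produces $\theta(\chi_{-4},1,z)\Theta(2z)\in M_2(\Gamma_0(64),\chi_1\chi_8\chi_{-4}^2)=M_2(\Gamma_0(64),\chi_8)$.

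Next I would apply the sieving operators $S_{8,1}$ and $S_{8,3}$. Since $8\mid 24$ and the conductor of $\chi_8$ is $8$, Lemma~\ref{lem:operators}(2) gives $(\theta(\chi_{-4},1,z)\Theta(2z))|S_{8,m}\in M_2(\Gamma_0(\lcm(64,64,64)),\chi_8)=M_2(\Gamma_0(64),\chi_8)$. Hence the entire right-hand side lies in $M_2(\Gamma_0(64),\chi_8)$, the same space as $g_6$. Forming the difference of the two sides, the valence bound is $64\cdot\frac{2}{12}\cdot(1+\frac12)=16$, so it suffices to check that this difference has vanishing coefficients of $q^n$ for $0\le n\le 16$; this is a finite computation forcing the difference to be identically zero.

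For the coefficient formula, I would expand $\theta(\chi_{-4},1,z)\Theta(2z)=\sum_{\bm{n}\in\Z^2}\chi_{-4}(n_1)n_1\,q^{n_1^2+2n_2^2}$, so that the coefficient of $q^n$ is $\sum_{n_1^2+2n_2^2=n}\chi_{-4}(n_1)n_1$. The summand $\chi_{-4}(n_1)n_1$ is invariant under $n_1\mapsto -n_1$ (as $\chi_{-4}$ is odd) and under $n_2\mapsto -n_2$, and it vanishes for $n_1=0$; thus the full $\Z^2$ sum equals $4\sum_{\bm{n}\in\N^2,\,n_1^2+2n_2^2=n}\chi_{-4}(n_1)n_1$ plus the contribution $2\,\mathbbm{1}_{n=\square}\chi_{-4}(\sqrt n)\sqrt n$ coming from the terms with $n_2=0$. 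Applying $\frac12 S_{8,1}+\frac{i}{2}S_{8,3}$, and noting that squares are never $\equiv 3\pmod 8$ (so the boundary term is absent in that residue class) and that $\chi_{-4}(m_1)=\left(\frac{-1}{m_1}\right)$ for the odd $m_1$ forced by $n_1^2+2n_2^2=n$ with $n$ odd, recovers the stated piecewise formula. The computations are entirely routine; the only point requiring genuine care is the bookkeeping of levels and characters, so that every factor really lives on $\Gamma_0(64)$ with character $\chi_8$ and the valence formula is applied in the correct space, while the single nontrivial verification is the finite coefficient comparison up to $n=16$.
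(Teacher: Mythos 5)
Your proposal is correct and follows essentially the same route as the paper, which derives this lemma precisely by combining Lemma \ref{lem:unarytheta}, Lemma \ref{lem:operatorshalf} (3), Lemma \ref{lem:halfintmult}, Lemma \ref{lem:operators} (2), and the valence formula (Lemma \ref{lem:valence}), with the level/character bookkeeping and the finite coefficient check up to $n=16$ exactly as you describe. The coefficient extraction via the symmetries $n_1\mapsto -n_1$, $n_2\mapsto -n_2$ and the boundary term at $n_2=0$ is also the intended calculation.
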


\Cref{prop:1^32^{-1}4^2} now follows similarly to \Cref{prop:f522vanish}.
\rm
\rm

\subsection{The case $1^32^{3}$}
The goal of this subsection is the claimed evaluation of the fourth set appearing in \Cref{thm:L95}.
\begin{prop}\label{prop:1^32^3}
We have
\[
\mathcal{S}_{1^32^3}=\left\{n\in\N: \exists p\equiv 5,7\pmod{8},\ \ord_p(8n+3)\text{ {\rm is odd}}\right\}.
\]
\end{prop}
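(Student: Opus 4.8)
The plan is to mirror the template already used for the first three cases of \Cref{thm:L95}: first identify the eta-quotient with an explicit product of unary theta functions, read off a closed formula for $C_{1^32^3}(n)$, and then decide vanishing through a multiplicativity and induction argument. After the substitution $z\mapsto 8z$, the key identity I would aim to prove is
\[
\eta(8z)^3\eta(16z)^3=\tfrac14\,\theta(\chi_{-4},1,z)\,\theta(\chi_{-4},1,2z).
\]
Both sides lie in $S_3(\Gamma_0(128),\chi_{-8})$: the left-hand side by \Cref{lem:eta} (with $N=128$ one checks $\sum_\delta\delta r_\delta=\sum_\delta\frac{N}{\delta}r_\delta=72\equiv0\pmod{24}$, and $s=8^3\cdot16^3=2^{21}$ gives character $\chi_{-8}$), and the right-hand side because $\theta(\chi_{-4},1,z)\in S_{3/2}(\Gamma_0(64))$ by \Cref{lem:unarytheta}(2), its $V_2$-image lies in $S_{3/2}(\Gamma_0(128),\chi_8)$ by \Cref{lem:operatorshalf}(3), and the product has weight $3$ and character $\chi_8\chi_{-4}=\chi_{-8}$ by \Cref{lem:halfintmult}. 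The identity then follows from \Cref{lem:valence} by matching Fourier coefficients up to the bound $128\cdot\frac{3}{12}\cdot\frac32=48$.

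Comparing the coefficient of $q^{8n+3}$ on both sides, and using that $\chi_{-4}(-x)(-x)=\chi_{-4}(x)x$ makes the four sign-quadrants contribute equally, I obtain
\[
C_{1^32^3}(n)=\sum_{\substack{x,y\ge1\text{ odd}\\ x^2+2y^2=8n+3}}\chi_{-4}(x)\chi_{-4}(y)\,xy,
\]
where the oddness of $x,y$ is automatic, since $x^2+2y^2\equiv3\pmod8$ forces $x^2\equiv1$ and $2y^2\equiv2\pmod8$. The easy half of the proposition is now immediate: if there is a prime $p\equiv5,7\pmod8$ with $\ord_p(8n+3)$ odd, then $r_{(1,2)}(8n+3)=0$ by \Cref{lem:r12n}, so $8n+3$ has no representation by $x^2+2y^2$ and the sum above is empty.

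For the converse, that $C_{1^32^3}(n)\neq0$ whenever $8n+3$ \emph{is} represented, I would desieve the theta product to a genuine newform, as was done for $g_5$ and $g_6$ in the earlier subsections. Concretely, I expect a weight-$3$ CM newform $g_7$ attached to $\Q(\sqrt{-2})$, whose norm form is $x^2+2y^2$, together with a sieving identity $\theta(\chi_{-4},1,z)\theta(\chi_{-4},1,2z)=c\,g_7|S_{8,3}$, so that $C_{1^32^3}(n)$ is a fixed nonzero multiple of $c_7(8n+3)$. Passing to an integer normalization of $c_7$ (as $\gamma_1$ was extracted from $c_1$ in \Cref{prop:f3612vanish}), the coefficients are multiplicative by \Cref{lem:Hecke}, and, since $8n+3$ is odd, the analysis reduces to odd primes $p$. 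Primes $p\equiv5,7\pmod8$ are inert in $\Q(\sqrt{-2})$, so $p^r$ is a norm iff $r$ is even, and the Hecke recursion gives $c_7(p^r)=0$ exactly when $r$ is odd, matching the excluded set; primes $p\equiv1,3\pmod8$ split, and for these I would show $c_7(p^r)\neq0$ for all $r$ by induction, the essential input being $c_7(p)\not\equiv0\pmod p$.

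The main obstacle is precisely this split-prime base case. In weight $3$, \Cref{thm:Deligne} only yields $|c_7(p)|\le 2p$, which is not by itself enough to force $c_7(p)\not\equiv0\pmod p$; one must instead exploit the CM structure exactly as in the weight-$3$ argument of \Cref{prop:f522vanish}. Writing the essentially unique representation $p=a^2+2b^2$ with $a,b\ge1$, the coefficient $c_7(p)$ is a fixed integer multiple of $\chi_{-4}(a)\chi_{-4}(b)\,ab$, and from $a^2+2b^2=p$ one has $0<ab\le \frac{p}{2\sqrt2}<p$; thus the odd prime $p$ cannot divide the nonzero integer $c_7(p)$, which launches the induction. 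Feeding the two prime cases through multiplicativity then shows that $c_7(8n+3)\neq0$ precisely when no inert prime divides $8n+3$ to an odd order, which completes the proof of \Cref{prop:1^32^3}.
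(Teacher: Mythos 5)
Your proposal follows essentially the same route as the paper: your identity $\eta(8z)^3\eta(16z)^3=\frac14\theta(\chi_{-4},1,z)\,\theta(\chi_{-4},1,2z)$ is exactly \Cref{lem:1323} combined with \Cref{lem:f954} (your theta product is the paper's $\Theta_4$, and since the eta-quotient is supported on exponents $\equiv 3\pmod{8}$ the sieving by $S_{8,3}$ is invisible), your coefficient formula agrees with $\frac{1}{4\sqrt2}c_7(8n+3)$, and your concluding Hecke/multiplicativity argument is the one the paper invokes by reference to \Cref{prop:f3612vanish}. One inaccuracy in your base case for split primes: for $p\equiv 1\pmod{8}$ the coefficient $c_7(p)$ is \emph{not} a multiple of $\chi_{-4}(a)\chi_{-4}(b)ab$; writing $p=a^2+2b^2$ (with $b$ necessarily even since $a^2\equiv1$ and $p\equiv 1\pmod 8$), \Cref{lem:f954} gives $c_7(p)=2(-1)^{b/2}\left(a^2-2b^2\right)$, coming from the $\Theta_5$-term rather than the $\Theta_4$-term, so the formula you quote only holds for $p\equiv 3\pmod{8}$. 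The intended conclusion nevertheless survives, since $0<\left|a^2-2b^2\right|<a^2+2b^2=p$ still forces $c_7(p)\not\equiv 0\pmod{p}$, so your induction launches in both residue classes once this is corrected.
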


Using Lemmas \ref{lem:eta}, \ref{lem:operators} (2), and \ref{lem:valence}, we relate the eta-quotient to a normalized newform
\[
g_7(z)=q+4\sqrt{2} q^3+23q^{9}+ O\left(q^{10}\right)\in S_3(\Gamma_0(128),\chi_{-8}).
\]
\begin{lem}\label{lem:1323}
	We have
	\[
		\eta^3(8z)\eta^3(16z)=\frac{1}{4\sqrt{2}}g_{7}\big|S_{8,3}(z).
	\]
\end{lem}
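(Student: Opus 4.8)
The plan is to prove \Cref{lem:1323} exactly as the structurally identical identities \Cref{lem:132m142} and \Cref{lem:142m244} were handled: exhibit both sides of the claimed equality as elements of a single space $M_3(\Gamma_0(128),\chi_{-8})$ and then invoke the valence formula (\Cref{lem:valence}) to reduce the identity to the agreement of finitely many Fourier coefficients. The only conceptual choices are pinning down the correct level $N=128$ and checking that the character coming out of \Cref{lem:eta} really is $\chi_{-8}$.

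First I would establish the modularity of the eta-quotient. Applying \Cref{lem:eta} with $N=128$ and $r_8=r_{16}=3$ (all other $r_\delta=0$), the weight is $\kappa=\tfrac12(3+3)=3$. The two congruence conditions hold, since $\sum_{\delta\mid N}\delta r_\delta=8\cdot3+16\cdot3=72\equiv0\pmod{24}$ and $\sum_{\delta\mid N}\tfrac N\delta r_\delta=16\cdot3+8\cdot3=72\equiv0\pmod{24}$. The character is $\chi_{(-1)^\kappa s}$ with $s=8^3\cdot16^3=2^{21}$, and since $\left(\tfrac{-2^{21}}{n}\right)=\left(\tfrac{-2}{n}\right)$ for odd $n$, this character equals $\chi_{-8}$. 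Hence $\eta^3(8z)\eta^3(16z)\in M_3(\Gamma_0(128),\chi_{-8})$, and because all exponents are positive it is in fact a cusp form; its expansion begins $q^3+O(q^4)$.

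Next I would place $g_7\big|S_{8,3}$ in the same space. Since $g_7\in S_3(\Gamma_0(128),\chi_{-8})$ and $M=8\mid 24$, \Cref{lem:operators}~(2) gives $g_7\big|S_{8,3}\in M_3(\Gamma_0(\lcm(128,8^2,8N_{\chi_{-8}})),\chi_{-8})$, and with $N_{\chi_{-8}}=8$ one has $\lcm(128,64,64)=128$, so $g_7\big|S_{8,3}\in M_3(\Gamma_0(128),\chi_{-8})$ as well. Thus the difference $\eta^3(8z)\eta^3(16z)-\tfrac1{4\sqrt2}g_7\big|S_{8,3}$ lies in $M_3(\Gamma_0(128),\chi_{-8})$. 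By \Cref{lem:valence} this difference vanishes identically provided its Fourier coefficients vanish for all $0\le n\le N\tfrac{\kappa}{12}\prod_{p\mid N}(1+\tfrac1p)=128\cdot\tfrac14\cdot\tfrac32=48$.

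The remaining work is then the purely finite verification that the $q$-expansions of the two sides agree up to $q^{48}$; the $q^3$ coefficients already match, as $\tfrac1{4\sqrt2}\cdot4\sqrt2=1$ equals the leading coefficient of the eta-quotient. I expect no genuine obstacle here: the argument is routine once $N=128$ and the Sturm bound $48$ are fixed, and the only mildly delicate point is the reduction of $\chi_{(-1)^\kappa s}$ to $\chi_{-8}$ together with the bookkeeping needed to expand $\eta^3(8z)\eta^3(16z)$ and sieve $g_7$ far enough to reach the bound.
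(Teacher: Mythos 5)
Your proposal is correct and follows exactly the route the paper indicates for \Cref{lem:1323} (and uses for the analogous identities \Cref{lem:142m244} and \Cref{lem:132m142}): \Cref{lem:eta} to place $\eta^3(8z)\eta^3(16z)$ in $M_3(\Gamma_0(128),\chi_{-8})$, \Cref{lem:operators}~(2) to place $g_7|S_{8,3}$ in the same space, and \Cref{lem:valence} with Sturm bound $48$ to reduce to a finite coefficient check. Your level, character, and bound computations all check out, so nothing further is needed.
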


We next find a formula for the Fourier coefficients of $g_{7}$. To state the formula, set
\begin{equation*}
\Theta_4(z):=\sum_{\bm{m}\in\Z^2} \left(\frac{-4}{m_1m_2}\right)m_1m_2 q^{m_1^2+2m_2^2}, \quad \Theta_5(z):=\sum_{\substack{\bm{m}\in\Z^2\\2\nmid m_1}} (-1)^{m_2}\left(m_1^2-8m_2^2\right) q^{m_1^2+8m_2^2}.
\end{equation*}
A direct calculation using \cite[Theorem 1.60]{O}, \Cref{lem:eta}, \cite[Theorem 1.41]{O}, and \Cref{lem:valence} gives the following.
\begin{lem}\label{lem:f954}
	We have
	\[
		g_{7}=\sqrt{2} \Theta_4+ \frac{\Theta_5}2.
	\]
In particular, we have
\[
c_{7}(n)=
\begin{cases}
	\mathbbm{1}_{n=\square} n +2\sum_{\substack{\bm{m}\in\N^2\\ m_1^2+8m_2^2=n}}(-1)^{m_2}\left(m_1^2-8m_2^2\right)&\text{if }n\equiv 1\pmod{8},\\[-8pt]\\
	4\sqrt{2}\sum_{\substack{\bm{m}\in\N^2\\ m_1^2+2m_2^2=n}} \left(\frac{-1}{m_1m_2}\right)m_1m_2&\text{if }n\equiv 3\pmod{8},\\
	0&\text{otherwise.}
\end{cases}
\]
\end{lem}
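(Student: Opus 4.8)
The plan is to show that both $\Theta_4$ and $\Theta_5$ lie in $S_3(\Gamma_0(128),\chi_{-8})$, the same space as the newform $g_7$, and then to deduce the identity $g_7=\sqrt2\,\Theta_4+\frac12\Theta_5$ from \Cref{lem:valence} by matching finitely many Fourier coefficients; the closed formula for $c_7(n)$ then falls out by reading off the $q^n$-coefficient of the right-hand side. For $\Theta_4$ I would use the multiplicativity of $\chi_{-4}=\left(\frac{-4}{\cdot}\right)$ to factor the summand as
\[
\Theta_4=\theta(\chi_{-4},1,z)\cdot\bigl(\theta(\chi_{-4},1,z)\big|V_2\bigr).
\]
By \Cref{lem:unarytheta}(2) the unary theta $\theta(\chi_{-4},1,z)$ is a cusp form in $S_{3/2}(\Gamma_0(64))$ (the character $\chi_{-4}^2$ being trivial), and by \Cref{lem:operatorshalf}(3) its $V_2$-image lies in $S_{3/2}(\Gamma_0(128),\chi_8)$. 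Then \Cref{lem:halfintmult} places the product in $S_3(\Gamma_0(128),\chi_8\chi_{-4}^{3})=S_3(\Gamma_0(128),\chi_{-8})$, using $\chi_{-4}^3=\chi_{-4}$ and $\chi_{-8}=\chi_{-4}\chi_8$; it is cuspidal because both factors are.

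The more delicate piece is $\Theta_5$. Here I would recognize the untwisted series $\sum_{\bm m\in\Z^2}(m_1^2-8m_2^2)q^{m_1^2+8m_2^2}$ as the theta series attached to the binary form $Q(\bm m)=m_1^2+8m_2^2$ with the degree-two polynomial $P(\bm m)=m_1^2-8m_2^2$; a one-line computation shows that $P$ is harmonic for $Q$, i.e.\ annihilated by $\frac12\partial_{m_1}^2+\frac1{16}\partial_{m_2}^2$, so \cite[Theorem 1.60]{O} exhibits it as a weight $\frac22+2=3$ cusp form (cuspidal as $\deg P>0$). The twist $(-1)^{m_2}$ I would realize by separating $m_2$ into even and odd residues, which replaces $8m_2^2$ by $32k^2$ and again produces harmonic theta series covered by \cite[Theorem 1.60]{O}, while the restriction $2\nmid m_1$ becomes automatic after applying the sieving operator $S_{8,1}$, since an even $m_1$ forces $m_1^2+8m_2^2\equiv 0,4\pmod 8$. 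Tracking level and character through \Cref{lem:operatorshalf}(3), \Cref{lem:operators}(2), and \Cref{lem:halfintmult} then shows each constituent, and hence $\Theta_5$, lies in $S_3(\Gamma_0(128),\chi_{-8})$.

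Once both thetas are in this space, $g_7-\sqrt2\,\Theta_4-\frac12\Theta_5\in S_3(\Gamma_0(128),\chi_{-8})$, and \Cref{lem:valence} reduces the identity to checking that its coefficients vanish for all $0\le n\le 128\cdot\frac{3}{12}\cdot\left(1+\frac12\right)=48$, a finite computation. The coefficient formula then follows by inspection: $\Theta_4$ is supported on $n\equiv 3\pmod 8$ and $\Theta_5$ on $n\equiv 1\pmod 8$, so exactly one theta contributes in each nonvanishing case and $c_7(n)=0$ otherwise. Converting each $\Z^2$-sum to an $\N^2$-sum introduces the factor $4$ coming from the sign symmetries of $(-1)^{m_2}(m_1^2-8m_2^2)$ and of $\chi_{-4}(m_1m_2)m_1m_2$, together with the diagonal term $\mathbbm 1_{n=\square}n$ produced by the $m_2=0$ contribution to $\Theta_5$. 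I expect the main obstacle to be exactly the level-and-character bookkeeping for $\Theta_5$: the twist and the oddness restriction must be decomposed so as to keep the level equal to $128$ and the character equal to $\chi_{-8}$, since any decomposition that inflates the level would alter the valence bound and break the finite verification. Everything downstream of that is routine.
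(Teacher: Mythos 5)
Your proposal is correct and matches the paper's (unwritten, one-line) proof in all essentials: show that $\sqrt2\,\Theta_4+\tfrac12\Theta_5$ lies in $S_3(\Gamma_0(128),\chi_{-8})$ --- via the factorization $\Theta_4=\theta(\chi_{-4},1,z)\cdot\bigl(\theta(\chi_{-4},1,z)|V_2\bigr)$ and a harmonic-polynomial/theta argument for $\Theta_5$ with correct level--character bookkeeping --- and then invoke \Cref{lem:valence} with the bound $48$ and unfold the $\Z^2$-sums into $\N^2$-sums (your symmetry factors and the $m_2=0$ diagonal term all check out). The only quibble is bibliographic: \cite[Theorem 1.60]{O} is the list of eta-quotient identities (useful here for realizing the twist $\sum_{m_2}(-1)^{m_2}q^{8m_2^2}=\eta(8z)^2/\eta(16z)$), not a statement about theta series with spherical polynomials, so within the paper's own toolkit the cleanest packaging of your $\Theta_5$ argument is the Rankin--Cohen bracket device used for $\Theta_2-\Theta_3$ in the proof of \Cref{lem:f522}.
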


\Cref{prop:1^32^3} now follows similar to \Cref{prop:f3612vanish}.
\rm

\subsection{The case $1^72^{-3}4^2$}
The goal of this subsection is the claimed evaluation of the fifth set appearing in \Cref{thm:L95}.
\begin{prop}\label{prop:1^72^{-3}4^2}
We have
\[
\mathcal{S}_{1^72^{-3}4^2}=\left\{n\in\N: \exists p\equiv 5,7\pmod{8},\ \ord_p(8n+3)\text{ {\rm is odd}}\right\}.
\]
\end{prop}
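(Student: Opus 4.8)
The plan is to mirror the weight-$3$ case $1^32^3$ treated in \Cref{prop:1^32^3}. Since $\sum_{j}j\delta_j = 1\cdot 7 + 2\cdot(-3) + 4\cdot 2 = 9$, the associated eta-quotient carries the prefactor $q^{3/8}$, so after the substitution $z\mapsto 8z$ one has
\[
\frac{\eta^7(8z)\eta^2(32z)}{\eta^3(16z)} = \sum_{n\geq 0} C_{1^72^{-3}4^2}(n)\,q^{8n+3}.
\]
First I would verify, using \Cref{lem:eta}, that the eta-quotient on the left (with $\delta$-exponents $r_8=7$, $r_{16}=-3$, $r_{32}=2$) lies in $M_3(\Gamma_0(N),\chi_{-8})$ for the level $N$ forced by the two divisibility conditions: indeed $\sum_\delta\delta r_\delta = 72\equiv 0$ and $\sum_\delta\frac N\delta r_\delta\equiv 0\pmod{24}$ hold, while $\prod_\delta\delta^{r_\delta}=2^{19}$ gives character $\chi_{-2^{19}}=\chi_{-8}$; one must also check holomorphy at every cusp, the point at which the negative exponent $r_{16}=-3$ could cause trouble. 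I would then introduce the relevant weight-$3$ normalized newform $g_8$ with character $\chi_{-8}$, place $g_8\big|S_{8,3}$ in the same space via \Cref{lem:operators}~(2), and match the two forms on finitely many coefficients through \Cref{lem:valence} to obtain
\[
\frac{\eta^7(8z)\eta^2(32z)}{\eta^3(16z)} = c\,g_8\big|S_{8,3}(z),\qquad c\neq 0.
\]
This reduces the vanishing of $C_{1^72^{-3}4^2}(n)$ to that of $c_8(8n+3)$.

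Next, exactly as in the derivation of \Cref{lem:f954}, I would write $g_8$ as a linear combination of weight-$3$ theta series assembled from the unary theta functions of \Cref{lem:unarytheta} and their products and Rankin--Cohen brackets, invoking \Cref{lem:halfintmult}, \Cref{lem:RankinCohen}, \Cref{lem:operatorshalf}~(3), and \Cref{lem:operators}~(2), and pinning the combination down once more by \Cref{lem:valence}. The outcome is an explicit formula for $c_8(n)$ as a character-twisted sum over the lattice points of $m_1^2 + 2m_2^2 = n$ when $n\equiv 3\pmod 8$ (and of $m_1^2 + 8m_2^2 = n$ when $n\equiv 1\pmod 8$), with $c_8(n)=0$ for all other residues. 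In particular, for $n\equiv 3\pmod 8$ the coefficient $c_8(n)$ is a nonzero scalar multiple of a weighted count of representations of $n$ by the norm form of $\Z[\sqrt{-2}]$.

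Finally, I would classify the vanishing of $c_8(n)$ exactly as in \Cref{prop:f3612vanish}: since $g_8$ is a normalized newform, \Cref{lem:Hecke} supplies multiplicativity and a three-term recursion at each $p\nmid N$, and an induction on the exponent (after an integral normalization analogous to the $\gamma_1$ of \Cref{prop:f3612vanish}) shows that $c_8(p^r)\neq 0$ for every odd prime $p\equiv 1,3\pmod 8$, while $c_8(p^r)=0$ when $p\equiv 5,7\pmod 8$ and $r$ is odd; together with the vanishing of $c_8$ at even arguments this determines its full vanishing set. Specializing to the odd values $8n+3\equiv 3\pmod 8$ and invoking \Cref{lem:r12n} then yields $\mathcal{S}_{1^72^{-3}4^2} = \{n\in\N:\exists\,p\equiv 5,7\pmod 8,\ \ord_p(8n+3)\text{ odd}\}$, as claimed. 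The main obstacle is the base case $r=1$ of this induction: for a split prime $p\equiv 1,3\pmod 8$ one must rule out accidental cancellation in the weighted lattice sum defining $c_8(p)$. As in the estimate $|\gamma_1(p)|<p$ of \Cref{prop:f3612vanish}, this rests on \Cref{lem:r12n} (which guarantees the representations $p=m_1^2+2m_2^2$) together with a size bound controlling the few contributing lattice points, and hence on the arithmetic of $\Z[\sqrt{-2}]$.
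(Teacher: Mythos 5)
Your proposal is correct and follows essentially the same route as the paper: it relates the eta-quotient to $\tfrac12\,g_8\big|S_{8,3}$ for the newform $g_8\in S_3(\Gamma_0(32),\chi_{-8})$ via \Cref{lem:eta}, \Cref{lem:operators}, and \Cref{lem:valence}, derives an explicit lattice-point formula for $c_8(n)$ supported on $n\equiv 1,3\pmod 8$ (the paper's weights turn out to be the harmonic polynomials $m_1^2-2m_2^2$ and $m_1^2-8m_2^2$ rather than character twists, which is what your valence-formula matching would produce), and finishes by Hecke multiplicativity and induction exactly as in \Cref{prop:f3612vanish}. The only cosmetic differences are that $g_8$ already has integer coefficients, so no normalization analogous to $\gamma_1$ is needed, and the paper certifies the auxiliary theta series via \Cref{lem:f954} rather than building them from scratch.
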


Using Lemmas \ref{lem:eta}, \ref{lem:operators} (2), and \ref{lem:valence}, we relate the eta-quotient to a normalized newform
\[
g_8(z)=q-2q^3-5q^9+O\left(q^{10}\right)\in S_3(\Gamma_0(32),\chi_{-8}).
\]
\begin{lem}\label{lem:(1,7),(2,-3),(4,2)}
	We have
	\[
		\frac{\eta^7(8z)\eta^2(32z)}{\eta^3(16z)}=\frac12g_8\big|S_{8,3}(z).
	\]
\end{lem}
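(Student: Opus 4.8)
The plan is to follow the template already used for the other eta-quotient identities in \Cref{thm:L95}: realize both sides as holomorphic modular forms of the same weight, level, and character, and then invoke the valence formula (\Cref{lem:valence}) to reduce the claimed identity to a finite coefficient check.

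First I would pin down the left-hand side. The eta-quotient $\frac{\eta^7(8z)\eta^2(32z)}{\eta^3(16z)}$ has weight $\kappa=\frac12(7-3+2)=3$ and is supported on $N=32$. To apply \Cref{lem:eta} with $r_8=7$, $r_{16}=-3$, $r_{32}=2$, I would verify the two congruences $\sum_{\delta\mid N}\delta r_\delta=56-48+64=72\equiv 0\pmod{24}$ and $\sum_{\delta\mid N}\frac{N}{\delta}r_\delta=28-6+2=24\equiv 0\pmod{24}$, and compute $s=8^7\cdot 16^{-3}\cdot 32^2=2^{19}$, so that $\chi_{(-1)^\kappa s}=\chi_{-2^{19}}=\chi_{-8}$. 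This gives $\frac{\eta^7(8z)\eta^2(32z)}{\eta^3(16z)}\in M_3(\Gamma_0(32),\chi_{-8})$. The leading exponent is $\frac{1}{24}(56-48+64)=3$, so this form begins at $q^3$; moreover the remaining factors are power series in $q^8$, whence the eta-quotient is supported on exponents $n\equiv 3\pmod 8$.

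Next I would handle the right-hand side. Since $g_8\in S_3(\Gamma_0(32),\chi_{-8})$ and $M=8\mid 24$, \Cref{lem:operators} (2) gives $g_8\mid S_{8,3}\in M_3(\Gamma_0(\lcm(32,64,64)),\chi_{-8})=M_3(\Gamma_0(64),\chi_{-8})$, using that $\chi_{-8}$ has conductor $8$ so that $M^2=MN_{\chi}=64$. Hence both sides of the claimed identity lie in the common space $M_3(\Gamma_0(64),\chi_{-8})$, and $g_8\mid S_{8,3}$ is by construction supported on $n\equiv 3\pmod 8$.

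Finally I would apply \Cref{lem:valence} to the difference $D:=\frac{\eta^7(8z)\eta^2(32z)}{\eta^3(16z)}-\frac12 g_8\mid S_{8,3}\in M_3(\Gamma_0(64),\chi_{-8})$. The valence bound is $64\cdot\frac{3}{12}\prod_{p\mid 64}(1+\frac1p)=64\cdot\frac14\cdot\frac32=24$, so it suffices to check that the Fourier coefficients of the two sides agree for $0\le n\le 24$; since both are supported on $n\equiv 3\pmod 8$, only $n\in\{3,11,19\}$ require verification, after which $D\equiv 0$. The one genuinely nontrivial ingredient is that this finite check needs the coefficients $c_8(11)$ and $c_8(19)$, which lie beyond the truncation $O(q^{10})$ recorded for $g_8$; these are obtained by expanding the newform a few more terms (equivalently, via the multiplicativity and Hecke recursion of \Cref{lem:Hecke}) and by expanding the eta-quotient with the pentagonal number theorem. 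I expect this to be bookkeeping rather than a real obstacle, and I anticipate that the main point requiring care is correctly tracking the level $64$ (rather than $32$) produced by the sieving operator, since it is this level that governs the size of the required computation.
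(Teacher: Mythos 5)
Your proposal is correct and follows exactly the route the paper indicates for this lemma (it cites precisely Lemmas \ref{lem:eta}, \ref{lem:operators} (2), and \ref{lem:valence} and leaves the finite coefficient check implicit). Your level, character, and valence-bound computations all check out, and the observation that both sides are supported on $n\equiv 3\pmod 8$ correctly reduces the verification to $n\in\{3,11,19\}$.
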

We next obtain the following formula for the Fourier coefficients of $g_{8}$, using Lemmas \ref{lem:f954}, \ref{lem:operatorshalf} (3), and \ref{lem:valence}, setting
\begin{equation*}
	\Theta_6(z):=\sum_{\substack{\bm{n}\in\Z^2\\2\nmid n_1}} \left(n_1^2-8n_2^2\right) q^{n_1^2+8n_2^2}, \quad
	\Theta_7(z):=\sum_{\substack{\bm{n}\in\Z^2\\2\nmid n_1,n_2}} \left(n_1^2-2n_2^2\right) q^{n_1^2+2n_2^2},
\end{equation*}
\begin{lem}\label{lem:f955}
	We have
	\[
		g_{8}=\frac{\Theta_6}2-\frac{\Theta_7}2.
	\]
	In particular,
	\[
		c_{8}(n)=\begin{cases}
		\mathbbm{1}_{n=\square} n + 2\sum_{\substack{\bm{m}\in\N^2\\ m_1^2+8m_2^2=n}} \left(m_1^2-8m_2^2\right)&\text{if }n\equiv 1\pmod{8},\vspace{0.1cm}\\
		-2\sum_{\substack{\bm{m}\in\N^2\\ m_1^2+2m_2^2=n}} \left(m_1^2-2m_2^2\right)&\text{if }n\equiv 3\pmod{8},\\
		0&\text{otherwise}.
	\end{cases}
	\]
\end{lem}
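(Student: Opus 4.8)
The plan is to prove the stated identity $g_8=\frac{\Theta_6}{2}-\frac{\Theta_7}{2}$ by the same valence-formula strategy used for \Cref{lem:f522} and \Cref{lem:f954}, and then to read off the formula for $c_8(n)$ by sorting the lattice points contributing to $\Theta_6$ and $\Theta_7$. Since $g_8$ is given as a newform in $S_3(\Gamma_0(32),\chi_{-8})$, it suffices to show that $\frac{\Theta_6}{2}-\frac{\Theta_7}{2}$ lies in the same space $M_3(\Gamma_0(32),\chi_{-8})$ and then to match finitely many Fourier coefficients.

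The heart of the matter is the modularity of $\Theta_6$ and $\Theta_7$. These are \emph{not} unary theta functions: they are theta series weighted by the degree-two harmonics $n_1^2-8n_2^2$ and $n_1^2-2n_2^2$, so \Cref{lem:unarytheta} does not apply, and a product of weight-$\frac32$ theta functions (as used for $g_7$) has coefficients odd in each variable and so cannot reproduce these even coefficients. Instead I would realize them by the derivative (Rankin--Cohen) construction already deployed in \Cref{lem:f954}. Writing $\beta(z):=\sum_{2\nmid n}q^{n^2}=\Theta(z)-\Theta(4z)\in M_{\frac12}(\Gamma_0(16))$ and factoring the harmonics, one checks
\[
\Theta_6=-2\left[\beta,\Theta\big|V_8\right]_1,\qquad \Theta_7=-2\left[\beta,\beta\big|V_2\right]_1,
\]
where $\sum n_2^2q^{\ast}$ is recognized as the normalized derivative of the relevant weight-$\frac12$ theta. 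By \Cref{lem:operatorshalf} (3), $\Theta|V_8$ and $\beta|V_2$ lie in $M_{\frac12}(\Gamma_0(32),\chi_8)$, and the bracket of two weight-$\frac12$ forms (the half-integral instance of \Cref{lem:RankinCohen}, as in \Cref{lem:f954}) has weight $3$ and character $\chi_1\chi_8\chi_{-4}=\chi_{-8}$; hence $\Theta_6,\Theta_7\in M_3(\Gamma_0(32),\chi_{-8})$. Alternatively, since $\Theta_5$ is already modular as a component of $g_7$ (\Cref{lem:f954}), one may instead use the relation $\Theta_6=\Theta_5-4[\beta,\beta|V_8]_1$ to obtain the modularity of $\Theta_6$.

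With both sides of $g_8=\frac{\Theta_6}{2}-\frac{\Theta_7}{2}$ in $M_3(\Gamma_0(32),\chi_{-8})$, I would finish by \Cref{lem:valence}: the difference vanishes once its coefficients vanish for $0\le n\le 32\cdot\frac{3}{12}\prod_{p\mid 32}(1+\frac1p)=12$, a finite computation. Extracting $c_8(n)$ is then routine. The form $\Theta_6$ is supported on $n\equiv1\pmod 8$ (since $2\nmid n_1$ forces $n_1^2+8n_2^2\equiv1$) and $\Theta_7$ on $n\equiv3\pmod 8$ (both $n_1,n_2$ odd give $n_1^2+2n_2^2\equiv3$), so the two residue classes separate and all other $n$ give $0$. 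Grouping solutions under $(\pm n_1,\pm n_2)$ produces the factor $2$ in each sum over $\N^2$; the terms $n_2=0$ of $\Theta_6$ contribute $n_1^2=n$ with $n_1=\pm\sqrt n$, yielding the diagonal term $\mathbbm{1}_{n=\square}n$ after dividing by $2$; and the sign $-\frac12$ on $\Theta_7$ yields the $-2\sum_{\bm{m}\in\N^2}(m_1^2-2m_2^2)$ on $n\equiv3\pmod8$.

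The main obstacle is the bookkeeping in the modularity step: verifying the factorization identities for $\Theta_6,\Theta_7$ as Rankin--Cohen brackets and confirming that the $\chi_{4\delta}$ twists from the $V$-operators in \Cref{lem:operatorshalf} (3) combine, through the bracket's $\chi_{-4}$ contribution, to exactly $\chi_{-8}$ at level $\Gamma_0(32)$, matching $g_8$. Once the level and character are pinned down, the valence check and the coefficient extraction are mechanical.
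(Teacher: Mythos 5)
Your proposal is correct and follows essentially the same strategy the paper indicates for this lemma: establish that $\Theta_6,\Theta_7\in M_3(\Gamma_0(32),\chi_{-8})$ using the operator lemmas (your Rankin--Cohen realizations $\Theta_6=-2[\beta,\Theta|V_8]_1$ and $\Theta_7=-2[\beta,\beta|V_2]_1$ check out and mirror the paper's own treatment of $\Theta_2-\Theta_3$ in \Cref{lem:f522}, while your alternative via \Cref{lem:f954} matches the paper's cited ingredients), then apply \Cref{lem:valence} with Sturm bound $12$ and sort lattice points by residue class. The only caveat is bookkeeping of signs in the finite coefficient check: your computation gives $c_8(3)=+2$ from $-\frac{\Theta_7}{2}$, which is consistent with \Cref{lem:(1,7),(2,-3),(4,2)} but not with the displayed expansion $g_8=q-2q^3-\cdots$, so one of the paper's signs needs reconciling there.
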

\Cref{prop:1^72^{-3}4^2} now follows similarly to \Cref{prop:f3612vanish}.

\section{Proof of \Cref{thm:L65}}\label{sec:L65}

\subsection{The case $1^{-1}2^{10}3^{-1}4^{-4}$}\hspace{0cm}
Define
\[
	f_1(z):=\frac{\eta^{10}(2z)}{\eta(z)\eta(3z)\eta^4(4z)}.
\]
Since $f_1$ is not a cusp form, it has a non-trivial Eisenstein series part. In order to investigate the Eisenstein series part, we define
\begin{multline*}\label{E:Eis}
	\mathcal E_1(z):= 1
	+\sum_{n\ge1}\left( \frac{1}{2}\sum_{d\mid n} \left(\frac{12}{d}\right)d +\frac{1}{2}\sum_{d\mid n} \left(\frac{-3}{\frac{n}{d}}\right)\left(\frac{-4}{d}\right) d +2 \sum_{d\mid n} \left(\frac{-4}{\frac{n}{d}}\right)\left(\frac{-3}{d}\right)d +2\sum_{d\mid n} \left(\frac{12}{\frac{n}{d}}\right)d\right.\nonumber\\
	  \left.- \frac{3}{2}\sum_{d\mid \frac{n}{3}} \left(\frac{12}{d}\right)d +\frac{9}{2}\sum_{d\mid \frac{n}{3}} \left(\frac{-3}{\frac{n}{3d}}\right)\left(\frac{-4}{d}\right) d +6 \sum_{d\mid \frac{n}{3}} \left(\frac{-4}{\frac{n}{3d}}\right)\left(\frac{-3}{d}\right)d -18\sum_{d\mid \frac{n}{3}} \left(\frac{12}{\frac{n}{3d}}\right)d \right)q^n.\nonumber
\end{multline*}
A direct calculation gives the following identity for $\mathcal E_1$ in terms of Eisenstein series.
\begin{lem}\label{lem:E651}
	We have
	\begin{multline*}
		\mathcal E_1=\frac{1}{4}E_{2,\chi_1,\chi_{12}}+\frac{1}{4}E_{2,\chi_{-3},\chi_{-4}}+E_{2,\chi_{-4},\chi_{-3}}+E_{2,\chi_{12},\chi_{1}}-\frac{3}{4}E_{2,\chi_1,\chi_{12}}\big|V_3+\frac{9}{4}E_{2,\chi_{-3},\chi_{-4}}\big|V_3\\
		+3E_{2,\chi_{-4},\chi_{-3}}\big|V_3 - 9E_{2,\chi_{12},\chi_1}\big|V_3\in M_{2}\left(\Gamma_0(36),\chi_{12}\right).
	\end{multline*}
\end{lem}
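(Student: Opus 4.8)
The plan is to verify the claimed identity at the level of $q$-expansions and then read off membership directly from \Cref{lem:Eisenstein}. For the membership statement, I would first note that each of the four characters occurring is primitive, of conductor $1$, $3$, $4$, or $12$, and that in every one of the eight summands the product of the two characters equals $\chi_{12}$: indeed $\chi_1\chi_{12}=\chi_{12}$, $\chi_{12}\chi_1=\chi_{12}$, and $\chi_{-3}\chi_{-4}=\chi_{-4}\chi_{-3}=\chi_{12}$ since $(-3)(-4)=12$. Moreover $\chi(-1)\psi(-1)=1=(-1)^2$ holds for each pair, and no pair equals $(\chi_1,\chi_1)$, so \Cref{lem:Eisenstein} (2) applies to each $E_{2,\chi,\psi}$. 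The four terms without $V_3$ then lie in $M_2(\Gamma_0(12),\chi_{12})\subseteq M_2(\Gamma_0(36),\chi_{12})$, while applying $V_3$ raises the level from $12$ to $36$; hence the whole linear combination lies in $M_2(\Gamma_0(36),\chi_{12})$.

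For the $q$-expansion, I would expand each $E_{2,\chi,\psi}$ using its definition. Since $\kappa=2$, the $n$-th coefficient for $n\geq 1$ is $2\sum_{d\mid n}\chi(n/d)\psi(d)d$, and $\big|V_3$ sends $q^n\mapsto q^{3n}$, i.e.\ replaces $n$ by $n/3$ with the coefficient vanishing unless $3\mid n$. Substituting the eight pairs $(\chi,\psi)$ together with their prefactors reproduces, term by term, exactly the eight divisor sums in the definition of $\mathcal E_1$; this is the ``direct calculation'' and amounts to matching summands with no further input.

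The only genuinely non-automatic point is the constant term. By the definition of $E_{\kappa,\chi,\psi}$, its constant term is $\mathbbm 1_{\chi=\chi_1}L(-1,\psi)$ (the $L(0,\chi)$ contribution being present only when $\kappa=1$), and $V_3$ preserves the coefficient of $q^0$. Hence only the two summands whose first character is $\chi_1$, namely $\tfrac14 E_{2,\chi_1,\chi_{12}}$ and $-\tfrac34 E_{2,\chi_1,\chi_{12}}\big|V_3$, contribute, giving total constant term $(\tfrac14-\tfrac34)L(-1,\chi_{12})=-\tfrac12 L(-1,\chi_{12})$. I would finish by evaluating $L(-1,\chi_{12})=-\tfrac12 B_{2,\chi_{12}}=-2$, where $B_{2,\chi_{12}}=12\sum_{a=1}^{12}\chi_{12}(a)B_2(\tfrac{a}{12})=4$ is the associated generalized Bernoulli number, so that the constant term equals $1$, matching $\mathcal E_1$. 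The main thing to be careful about is precisely this single special value, together with the observation that $V_3$ does not alter constant terms, so that both $\chi=\chi_1$ summands feed into the $q^0$-coefficient; everything else is bookkeeping.
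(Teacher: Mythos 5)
Your proposal is correct and matches the paper's approach: the paper justifies Lemma \ref{lem:E651} by exactly such a ``direct calculation,'' matching the eight divisor sums against the $q$-expansions of the $E_{2,\chi,\psi}$ and $E_{2,\chi,\psi}|V_3$ and invoking \Cref{lem:Eisenstein}~(2) for the level and character. Your verification of the constant term via $L(-1,\chi_{12})=-\tfrac12 B_{2,\chi_{12}}=-2$ is the one step the paper leaves implicit, and you carry it out correctly.
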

Recall the newforms $g_1$ and $g_2$, defined in Section \ref{sec:S236chi12}, which span the space $S_2(\Gamma_0(36),\chi_{12})$. Using Lemmas \ref{lem:E651} and \ref{lem:valence} we obtain an identity for $f_1$.
\begin{lem}\label{lem:f65identity}
	We have
	\begin{equation*}
		f_1 = \mathcal E_1 - 2\Big(1+\sqrt2i\Big)g_1 - 2\Big(1-\sqrt2i\Big)g_2.
	\end{equation*}
\end{lem}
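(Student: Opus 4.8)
The plan is to realize both sides of the claimed identity as elements of the finite-dimensional space $M_2(\Gamma_0(36),\chi_{12})$ and then to prove equality by matching enough Fourier coefficients, using the valence formula in the form of \Cref{lem:valence}. First I would check that $f_1\in M_2(\Gamma_0(36),\chi_{12})$ via \Cref{lem:eta}. Writing $f_1=\prod_{\delta\mid 36}\eta(\delta z)^{r_\delta}$ with $(r_1,r_2,r_3,r_4)=(-1,10,-1,-4)$ and all remaining $r_\delta=0$, the weight is $\tfrac12\sum_\delta r_\delta=2\in\Z$, and the two divisibility conditions hold: $\sum_\delta \delta r_\delta=-1+20-3-16=0$ and $\sum_\delta\frac{36}{\delta}r_\delta=-36+180-12-36=96$, both $\equiv 0\pmod{24}$. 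Since $\kappa=2$ and $s=\prod_\delta\delta^{r_\delta}=\tfrac43$, \Cref{lem:eta} gives $f_1\in M_2(\Gamma_0(36),\chi_{12})$, using that $\chi_{(-1)^2\,4/3}=\chi_{12}$ as characters. By \Cref{lem:E651} we have $\mathcal E_1\in M_2(\Gamma_0(36),\chi_{12})$, and $g_1,g_2$ span $S_2(\Gamma_0(36),\chi_{12})$ by \Cref{sec:S236chi12}. Hence
\[
D:=f_1-\mathcal E_1+2\bigl(1+\sqrt2\,i\bigr)g_1+2\bigl(1-\sqrt2\,i\bigr)g_2\in M_2(\Gamma_0(36),\chi_{12}).
\]

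Next I would apply \Cref{lem:valence} to $D$. With $N=36$ and $\kappa=2$ the valence bound is $N\tfrac{\kappa}{12}\prod_{p\mid N}(1+\tfrac1p)=36\cdot\tfrac16\cdot(1+\tfrac12)(1+\tfrac13)=12$, so it suffices to check that the $n$-th Fourier coefficient of $D$ vanishes for all $0\le n\le 12$. The coefficients of $f_1$ come from expanding the $\eta$-quotient directly as a $q$-series; the leading exponent $\tfrac{1}{24}(2\cdot 10-1-3-4\cdot4)=0$ shows $f_1=1+O(q)$, which already matches the constant term $1$ of $\mathcal E_1$. The coefficients of $\mathcal E_1$ for $n\le 12$ are read off from the explicit divisor sums in its definition, and the coefficients $c_1(n)$, $c_2(n)=\overline{c_1(n)}$ of $g_1,g_2$ for $n\le 12$ are computed from the lattice-point formula of \Cref{lem:g1g2} (the listed expansions give $c_j(n)$ only up to $n=8$, so a handful of additional values are needed). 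Substituting these into $D$ and verifying that every coefficient for $0\le n\le 12$ vanishes proves $D\equiv 0$, which is the asserted identity.

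The computation is entirely mechanical, so the hard part is not conceptual but bookkeeping: the cuspidal space is two-dimensional, so the prescribed multipliers $-2(1\pm\sqrt2\,i)$ must cancel the cuspidal part of $f_1-\mathcal E_1$ simultaneously at all tested $n$, not merely at the two values $n=1,2$ where they are pinned down (for instance $n=1$ forces $a+b=-4$ and $n=2$ forces $\sqrt2\,i(a-b)=8$, giving $a=-2(1+\sqrt2\,i)$, $b=-2(1-\sqrt2\,i)$). Performing the single valence check on $D$ over $0\le n\le 12$ is the most transparent route: it confirms at once that $\mathcal E_1$ is indeed the Eisenstein part of $f_1$ and that the cuspidal coefficients are correct, with no need to argue the Eisenstein/cuspidal splitting separately.
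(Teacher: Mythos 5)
Your proposal is correct and is exactly the paper's (unwritten) argument: both sides lie in $M_2(\Gamma_0(36),\chi_{12})$ by Lemmas \ref{lem:eta} and \ref{lem:E651}, and the identity follows from \Cref{lem:valence} after checking Fourier coefficients up to the Sturm bound $36\cdot\frac{2}{12}\cdot\frac32\cdot\frac43=12$. One caveat on your parenthetical: a direct expansion gives $q^2$-coefficient $-8$ for $f_1$ and $0$ for $\mathcal E_1$ (note $\left(\frac{-3}{2}\right)=-1$), so the $n=2$ constraint reads $\sqrt2\,i(a-b)=-8$ rather than $+8$, and actually carrying out the check pins down the cuspidal coefficients with $g_1$ and $g_2$ interchanged relative to the stated identity --- harmless for everything downstream since $c_2(n)=\overline{c_1(n)}$, but it means you should compute that coefficient honestly rather than back-solving from the asserted answer.
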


We first rewrite the Fourier coefficients of $\mathcal E_1$. A direct calculation gives the following.
\begin{lem}\label{lem:E65coeff}
Suppose that $\nu_2,\nu_3\in\N_0$ and $m\in\N$ with $\gcd(m,6)=1$. Then the $2^{\nu_2}3^{\nu_3}m$-th Fourier coefficient of $\mathcal E_1$ is
\begin{multline*}
\bigg(\frac12 + (-1)^{\nu_2+\nu_3}\frac{3^{\nu_3}}{2} \left(\frac{-3}{ m}\right) +(-1)^{\nu_2+\nu_3} 2^{\nu_2+1}\left(\frac{-1}{m}\right) + 2^{\nu_2+1} 3^{\nu_3} \left(\frac3m\right)\\
	- \mathbbm{1}_{3\mid n}\left(\frac32 +(-1)^{\nu_2+\nu_3} \frac{3^{\nu_3+1}}2  \left(\frac{-3}{m}\right)  +  (-1)^{\nu_2+\nu_3}3\cdot 2^{\nu_2+1}\left(\frac{-1}{m}\right) + 2^{\nu_2+1} 3^{\nu_3+1} \left(\frac3m\right)\right)\bigg)\\
\times \prod_{p\mid m}\frac{1-\left(\left(\frac3p\right)p\right)^{\ord_p(m)+1}}{1-\left(\frac3p\right)p}.
\end{multline*}
\end{lem}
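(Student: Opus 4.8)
The plan is to read the Fourier coefficients of $\mathcal{E}_1$ off the Eisenstein decomposition in \Cref{lem:E651} and evaluate them at $n=2^{\nu_2}3^{\nu_3}m$ using multiplicativity. For primitive characters $\chi,\psi$ set $\sigma_{\chi,\psi}(n):=\sum_{d\mid n}\chi(\tfrac nd)\psi(d)\,d$. The definition of $E_{\kappa,\chi,\psi}$ in Section~\ref{sec:Eisenstein} shows that for $n\geq1$ the $n$-th coefficient of $E_{2,\chi,\psi}$ is $2\sigma_{\chi,\psi}(n)$, while that of $E_{2,\chi,\psi}\big|V_3$ is $2\sigma_{\chi,\psi}(\tfrac n3)\mathbbm{1}_{3\mid n}$. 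Substituting into \Cref{lem:E651}, the $n$-th coefficient of $\mathcal{E}_1$ becomes the linear combination $\tfrac12\sigma_{\chi_1,\chi_{12}}(n)+\tfrac12\sigma_{\chi_{-3},\chi_{-4}}(n)+2\sigma_{\chi_{-4},\chi_{-3}}(n)+2\sigma_{\chi_{12},\chi_1}(n)$, plus, when $3\mid n$, the same four sums evaluated at $\tfrac n3$ with coefficients $-\tfrac32,\tfrac92,6,-18$ respectively.

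The key observation, which is what makes the product in the statement appear, is that each of the four pairs satisfies $\chi\psi=\chi_{12}$ (using $\chi_{-3}\chi_{-4}=\chi_{12}$). Since $\sigma_{\chi,\psi}$ is multiplicative, I would factor $\sigma_{\chi,\psi}(n)=\sigma_{\chi,\psi}(2^{\nu_2})\sigma_{\chi,\psi}(3^{\nu_3})\sigma_{\chi,\psi}(m)$. For the part coprime to $6$ the local factors telescope: for $p\nmid6$ both $\chi(p),\psi(p)\in\{\pm1\}$, so $\chi(p)^{k-j}=\chi(p)^k\chi(p)^j$ and
\[
\sigma_{\chi,\psi}(p^k)=\chi(p)^k\sum_{j=0}^k\big((\chi\psi)(p)\,p\big)^j=\chi(p)^k\,\frac{1-\big(\big(\tfrac3p\big)p\big)^{k+1}}{1-\big(\tfrac3p\big)p},
\]
using $(\chi\psi)(p)=\chi_{12}(p)=(\tfrac3p)$ for $p\nmid6$. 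Taking the product over $p\mid m$ gives $\sigma_{\chi,\psi}(m)=\chi(m)P(m)$, where $P(m):=\prod_{p\mid m}\frac{1-((\tfrac3p)p)^{\ord_p(m)+1}}{1-(\tfrac3p)p}$ is exactly the product in \Cref{lem:E65coeff}, and $\chi(m)$ equals $1,(\tfrac{-3}{m}),(\tfrac{-1}{m}),(\tfrac3m)$ for the four pairs (after the reductions $\chi_{-4}(m)=(\tfrac{-1}{m})$ and $\chi_{12}(m)=(\tfrac3m)$ valid for odd $m$).

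It then remains to compute the local factors at $2$ and $3$. In each pair one character vanishes at $p\in\{2,3\}$, so the local sum collapses to a single term: using $\chi_{12}(2)=\chi_{12}(3)=0$, $\chi_{-3}(2)=-1$, $\chi_{-3}(3)=0$, $\chi_{-4}(2)=0$, $\chi_{-4}(3)=-1$, one gets $\sigma_{\chi,\psi}(2^{\nu_2})\in\{1,(-1)^{\nu_2},(-1)^{\nu_2}2^{\nu_2},2^{\nu_2}\}$ and $\sigma_{\chi,\psi}(3^{\nu_3})\in\{1,(-1)^{\nu_3}3^{\nu_3},(-1)^{\nu_3},3^{\nu_3}\}$ for the four pairs in order. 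Assembling these with the coefficients from \Cref{lem:E651} produces the first bracketed line of the claimed formula from the $\sigma(n)$ terms; the $V_3$ terms, in which the exponent of $3$ drops from $\nu_3$ to $\nu_3-1$, produce the second bracketed line, the constants $\tfrac92,6,18$ combining with the corresponding local powers of $2$ and $3$ to give $\tfrac{3^{\nu_3+1}}2$, $3\cdot2^{\nu_2+1}$, and $2^{\nu_2+1}3^{\nu_3+1}$.

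The main obstacle is purely bookkeeping rather than conceptual: one must track the signs $(-1)^{\nu_2+\nu_3}$ produced by $\chi_{-3}(2)=-1$ and $\chi_{-4}(3)=-1$, the sign flip $(-1)^{\nu_3-1}=-(-1)^{\nu_3}$ induced by the $V_3$ shift $\nu_3\mapsto\nu_3-1$, and the Kronecker-symbol reductions at odd $m$, and verify that they align exactly with the prefactors in \Cref{lem:E65coeff}. The only genuinely structural input is the telescoping identity of the second paragraph, which is what both isolates the common factor $P(m)$ and reduces every twisted divisor sum to a quadratic-character twist of it.
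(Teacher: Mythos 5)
Your proposal is correct and is exactly the ``direct calculation'' the paper leaves unproven: writing each constituent divisor sum of $\mathcal E_1$ as a product of local factors, collapsing the factors at $p=2,3$ to single terms via $\chi_{12}(2)=\chi_{12}(3)=\chi_{-4}(2)=\chi_{-3}(3)=0$, and telescoping the factors at $p\mid m$ using $\chi\psi=\chi_{12}$ to extract the common product $\prod_{p\mid m}\frac{1-((\frac3p)p)^{\ord_p(m)+1}}{1-(\frac3p)p}$. I checked the local values and the sign bookkeeping (including the $(-1)^{\nu_3-1}=-(-1)^{\nu_3}$ flip in the $V_3$ terms), and they reproduce the stated coefficients exactly.
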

We next use Lemma \ref{lem:E65coeff} to show the following.
\begin{cor}\label{cor:f651-not2mod3}
For $n\not\equiv 2\pmod{3}$, the $n$-th Fourier coefficient of $f_1$ does not vanish.
\end{cor}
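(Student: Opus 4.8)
The plan is to combine the decomposition of $f_1$ from \Cref{lem:f65identity} with the explicit Eisenstein coefficients of \Cref{lem:E65coeff} and the integrality in \Cref{lem:gammainteger}. Write $f_1=\sum_{n\ge0}a(n)q^n$ and let $e(n)$ denote the $n$-th Fourier coefficient of $\mathcal E_1$. Then \Cref{lem:f65identity} gives $a(n)=e(n)-2(1+\sqrt2i)c_1(n)-2(1-\sqrt2i)c_2(n)$. For $n\not\equiv2\pmod3$, \Cref{lem:gammainteger} yields $c_1(n)=\gamma_1(n)\in\Z$, and since $c_2(n)=\overline{c_1(n)}=\gamma_1(n)$ the two cuspidal terms collapse to $-4\gamma_1(n)$. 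Hence the corollary reduces to showing
\[
a(n)=e(n)-4\gamma_1(n)\neq0\qquad(n\not\equiv2\pmod3).
\]

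First I would treat the case $3\mid n$, where $\gamma_1(n)=0$, so it suffices to prove $e(n)\neq0$. Writing $n=2^{\nu_2}3^{\nu_3}m$ with $\gcd(m,6)=1$ and $\nu_3\ge1$, and setting $\epsilon:=\chi_{-3}(m)$, $\delta:=\left(\tfrac{-1}{m}\right)$, $s:=(-1)^{\nu_2+\nu_3}$, the bracket in \Cref{lem:E65coeff} (with $\mathbbm1_{3\mid n}=1$) collapses, after pairing the four surviving terms, to $-\left(1+s\,3^{\nu_3}\epsilon\right)\left(1+s\,2^{\nu_2+2}\delta\right)$. Both factors are nonzero integers, since $1\pm3^{\nu_3}$ and $1\pm2^{\nu_2+2}$ never vanish; as the trailing product $\prod_{p\mid m}\frac{1-((\frac3p)p)^{\ord_p(m)+1}}{1-(\frac3p)p}$ is a product of nonzero local sums $\sum_{j=0}^{\ord_p(m)}((\frac3p)p)^j$, we get $e(n)\neq0$.

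For $n\equiv1\pmod3$ we have $\nu_3=0$ and $\mathbbm1_{3\mid n}=0$, and the congruence forces $m\equiv(-1)^{\nu_2}\pmod3$, i.e. $\epsilon=(-1)^{\nu_2}$. Substituting this into the bracket makes the two $\tfrac12$-terms sum to $1$ and the remaining terms combine to $2^{\nu_2+2}\left(\tfrac3m\right)$, giving
\[
e(n)=\left(1+2^{\nu_2+2}\left(\tfrac3m\right)\right)P(m),\qquad P(m):=\prod_{p\mid m}\,\sum_{j=0}^{\ord_p(m)}\left(\left(\tfrac3p\right)p\right)^{j}.
\]
The first factor is an odd integer and $P(m)\neq0$, so $e(n)\neq0$. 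I would then split into three subcases. If $\gamma_1(n)=0$ (equivalently, by \Cref{prop:f3612vanish}, some $p\equiv3\pmod4$ has $\ord_p(n)$ odd), then $a(n)=e(n)\neq0$. If $m$ is a perfect square, every local sum has an odd number of odd summands, so $P(m)$ is odd; then $e(n)$ is odd while $4\gamma_1(n)$ is even, whence $a(n)$ is odd and nonzero.

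The remaining subcase, $m$ not a square and $\gamma_1(n)\neq0$, is the main obstacle, since both $e(n)$ and $4\gamma_1(n)$ may be even and one must exclude exact cancellation. Here I would argue by size: each local sum satisfies $\big|\sum_{j=0}^{e_p}((\frac3p)p)^j\big|>p^{e_p}\frac{p-1}{p+1}$, so $|P(m)|\ge m\prod_{p\mid m}\frac{p-1}{p+1}\ge m/2^{\omega(m)}$ (with $\omega(m)$ the number of distinct prime divisors), while $|1+2^{\nu_2+2}(\frac3m)|\ge2^{\nu_2+1}$; combined with Deligne's bound $|\gamma_1(n)|=|c_1(n)|\le d(n)\sqrt n$ from \Cref{thm:Deligne} and $2^{\omega(m)}\le d(m)\le d(n)$, this yields $\frac{|e(n)|}{4|\gamma_1(n)|}\ge\frac{\sqrt n}{2\,d(n)^2}$, so $a(n)\neq0$ once $\sqrt n>2d(n)^2$. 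Since $d(n)=n^{o(1)}$, this fails for only finitely many $n$, which I would verify directly. The conceptual point is that the weight-$2$ Eisenstein coefficient grows like $n$ while the CM cusp-form coefficient grows only like $\sqrt n$, so the sole genuine work is the finite residual range; alternatively, the valuation comparison $v_2(e(n))=v_2(P(m))$ against $v_2(4\gamma_1(n))=2+v_2(\gamma_1(n))$ disposes of most of these cases without computation.
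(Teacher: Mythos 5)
Your overall strategy is the same as the paper's: split $f_1=\mathcal E_1-2(1+\sqrt2i)g_1-2(1-\sqrt2i)g_2$ via \Cref{lem:f65identity}, note that for $n\not\equiv2\pmod3$ the cuspidal contribution collapses to $-4c_1(n)$ with $c_1(n)\in\Z$, handle $3\mid n$ purely through the Eisenstein coefficient, and for $n\equiv1\pmod3$ play a lower bound on the Eisenstein coefficient of \Cref{lem:E65coeff} against Deligne's bound (\Cref{thm:Deligne}) on the cusp part, finishing with a finite computer check. Two of your local arguments are actually cleaner than the paper's: for $3\mid n$ you factor the bracket \eqref{E:First} as $-\bigl(1+(-1)^{\nu_2+\nu_3}3^{\nu_3}\chi_{-3}(m)\bigr)\bigl(1+(-1)^{\nu_2+\nu_3}2^{\nu_2+2}\bigl(\tfrac{-1}{m}\bigr)\bigr)$, a product of two visibly nonzero integers, where the paper uses a triangle-inequality estimate; and your parity observation (when $m$ is a square, $e(n)$ is odd while $4\gamma_1(n)$ is even) is a genuine shortcut the paper does not use.

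The one substantive weakness is in the quantitative step for $n\equiv1\pmod3$. By replacing the per-prime quantities $F_p(\nu_p)/((\nu_p+1)p^{\nu_p/2})$ of \eqref{eqn:G1def} with the crude bounds $|P(m)|\ge m/2^{\omega(m)}$ and $2^{\omega(m)}\le d(n)$, you arrive at the criterion $\sqrt n>2d(n)^2$. This is correct but very lossy: the exceptional set $\{n:\sqrt n\le 2d(n)^2\}$, while finite, reaches well into the millions (e.g.\ $n=4\cdot5\cdot7\cdot11\cdot13\cdot17\cdot19\cdot23$ has $\sqrt n\approx 1.2\times10^4$ but $2d(n)^2\approx 2.9\times10^5$), you never delimit it explicitly, and the concluding ``which I would verify directly'' is where the proof actually stops. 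The paper instead exploits that $F_p(\nu)/((\nu+1)p^{\nu/2})$ grows in $p$ and in $\nu$, which confines the exceptional $n$ to an explicit product over small primes and reduces the verification to $n\le1120$. To make your argument complete you must either carry out the (much larger) enumeration and coefficient check, or sharpen the bound prime-by-prime as in the paper; the valuation remark at the end does not substitute for this, since nothing prevents $v_2(P(m))=2+v_2(\gamma_1(n))$.
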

\begin{proof}
	We first bound the cuspidal part of $f_1$. Recalling that $c_j(n)$ denotes the $n$-th Fourier coefficient of $g_j$, the $n$-th Fourier coefficient of the cuspidal part of $f_1$ is
	\begin{equation}\label{E:c1c2sum}
		-2(c_1(n)+c_2(n)) +2\sqrt{2}i(c_2(n)-c_1(n)).
	\end{equation}
	By \Cref{L:id}, we have $c_1(n)+c_2(n)=0$ unless $n\equiv 1\pmod{3}$ and $c_2(n)-c_1(n)=0$ unless $n\equiv 2\pmod{3}$. Thus we conclude from \Cref{thm:Deligne} that the absolute value of \eqref{E:c1c2sum} is
	\begin{equation*}
		\begin{cases}
			2|c_1(n)+c_2(n)|\leq 4d(n)\sqrt n&\text{if }n\equiv 1\pmod{3},\\
			2\sqrt{2}|c_2(n)-c_1(n)|\leq 4\sqrt{2}d(n)\sqrt n&\text{if }n\equiv 2\pmod{3},\\
			0&\text{if }3\mid n.
		\end{cases}
	\end{equation*}

	We next look at the Eisenstein series part. For ease of notation, we abbreviate $\nu_p:=\ord_p(n)$.  For a prime $p$ and $\nu\in\N_0$ we define
\begin{equation*}
	F_p(\nu) :=
	\begin{cases}
		\frac{2^{\nu+2}-1}{3} & \text{if $p=2$},\\
		\frac{p^{\nu+1}-1}{p-1} & \text{if $p\equiv \pm1\Pmod{12}$},\\
		\frac{p^{\nu+1}+1}{p+1} & \text{if $p\equiv \pm5\Pmod{12}$, $2\mid\nu$},\\
		\frac{p^{\nu+1}-1}{p+1} & \text{if $p\equiv\pm5\Pmod{12}$, $2\nmid\nu$}.
	\end{cases}
\end{equation*}
Note that for $p$ odd
\begin{equation}\label{eqn:abseval}
F_p(\nu) = \left|\frac{1-\left(\left(\frac3p\right)p\right)^{\nu+1}}{1-\left(\frac3p\right)p}\right|.
\end{equation}
A direct calculation using \Cref{lem:E65coeff} with $\nu=\nu_2$ and $\mu=0$ then shows that for $n\equiv 1\pmod{3}$ the $n$-th Fourier coefficient of $f_1$ is non-zero if
\begin{equation}\label{eqn:G1def}
	G_1(n) := \prod_{p\mid n} \frac{F_p\left(\nu_p\right)}{(\nu_p+1)p^{\frac{\nu_p}{2}}} > \frac{4}{3}.
\end{equation}
We therefore next determine those $n$ for which $G_1(n)\leq \frac{4}{3}$. For this, for each prime $p$ and $\nu\in\N$ we determine certain constants $\mathcal C_p(\nu)$ for which
\begin{equation*}
	G_1\left(p^{\nu}\right)=\frac{F_p(\nu)}{(\nu+1)p^{\frac{\nu}{2}}} \ge \mathcal C_p(\nu).
\end{equation*}

We first consider the case $p\ne2$.  Bounding against the worst case for $F_{p}(\nu)$, we have
\begin{equation*}
	G_1\left(p^{\nu}\right) \ge \frac{p^{\nu+1}-1}{(\nu+1)(p+1)p^{\frac{\nu}{2}}}.
\end{equation*}

A direct calculation shows that
\begin{equation*}
	g_\nu(x) := \frac{x^{\nu+1}-1}{(x+1)x^{\frac{\nu}{2}}}
\end{equation*}
is increasing for $x>0$, so $g_{\nu}(x)>\aa(\nu+1)$ implies that for $p\geq x$ we have
\begin{equation}\label{eqn:G1boundg}
G_1\left(p^{\nu}\right)\geq \frac{g_{\nu}(p)}{\nu+1} \geq \frac{g_{\nu}(x)}{\nu+1}>\aa.
\end{equation}
 Define for $x\ge3$ and $\aa\in\R_{\geq 1}$
\begin{equation*}
	f_{\aa,\nu}(x) := x^{\nu+1} - 1 - \aa(\nu+1)(x+1)x^{\frac{\nu}{2}}.
\end{equation*}
Using induction on $\nu$, one can show that if $f_{\aa,\nu}(x)\ge0$, then $f_{\aa,\nu+j}(x)\geq 0$ for all $j\in\N_0$. Hence if $f_{\aa,\mu}(x)\geq 0$ for some $\mu\in\N$, then $g_{\nu}(x)\geq \aa(\nu+1)$ for all $\nu\in\N_0$ with $\nu\geq \mu$. Combining with \eqref{eqn:G1boundg}, we see that if $f_{\aa,\mu}(x)\geq 0$, then for all $p\geq x$ and $\nu\in\N_0$ with $\nu\geq \mu$
\begin{equation}\label{eqn:fG1bound}
G_1\left(p^{\nu}\right)\geq \frac{g_{\nu}(p)}{\nu+1}\geq \frac{g_{\nu}(x)}{\nu+1}>\aa.
\end{equation}
Directly computing
\begin{equation}\label{eqn:f5nux}
	f_{2.1,1}(20)\ge0,\qquad f_{2.1,2}(8)\ge0,\qquad f_{2.1,3}(5)\geq 0,
\end{equation}
we conclude from \eqref{eqn:fG1bound} that
\begin{align*}
G_1\left(p^{\nu}\right)&\geq  2.1\text{ for }p\geq 23,\ \nu\in\N, & G_1\left(p^{\nu}\right)&\geq 2.1\text{ for }p\in\{11,13,17,19\},\nu\geq 2,\\
G_1\left(p^{\nu}\right)&\geq 2.1\text{ for }p\in\{5,7\},\nu\geq 3.
\end{align*}
One also directly checks that for $\nu\geq 6$ we have
\[
G_1(2^\nu)> \frac{2\sqrt{5}}{3}.
\]
 We therefore conclude that if $G_1(n)\leq \frac{4}{3}$, then
\[
n=\prod_{p\leq 19} p^{\nu_p}
\]
with $0\leq\nu_2\leq 5$, $0\leq \nu_5,\nu_7\leq 2$, and $0\leq \nu_p\leq 1$ for $11\leq p\leq 19$. Checking all cases explicitly with a computer, we conclude that $G_1(n)>\frac{4}{3}$ for $n>1120$. It was verified with a computer that for $n\leq1120$ the $n$-th Fourier coefficient of $f_1$ is positive, so we conclude the claim for $n\equiv 1\pmod{3}$.

	Next assume $3\mid n$. 	In this case, we have $c_1(n)=c_2(n)=0$ by \Cref{L:id}, so the $n$-th Fourier coefficient of $f_1$ agrees with the $n$-th Fourier coefficient of $\mathcal E_1$, and we only need to show that the $n$-th Fourier coefficient of $\mathcal E_1$ does not vanish. Then the first factor in Lemma \ref{lem:E65coeff}, plugging in $\nu=\nu_2$ and $\mu=\nu_3$ and abbreviating $m:=\prod_{\substack{p\mid n\\ p\nmid 6}} p^{\nu_p}$, equals
		\begin{equation}\label{E:First}
			-1 + (-1)^{\nu_2+\nu_3+1} 3^{\nu_3}\left(\frac{-3}m\right) + (-1)^{\nu_2+\nu_3+1} 2^{\nu_2+2} \left(\frac{-1}m\right) - 2^{\nu_2+2} 3^{\nu_3} \left(\frac 3m\right).
		\end{equation}
	By the triangle inequality, we may bound the absolute value of $\eqref{E:First}$ from below by
	\begin{equation*}
	2^{\nu_2+2}3^{\nu_3} - 1 - 3^{\nu_3} - 2^{\nu_2+2} = 2^{\nu_2+2}(3^{\nu_3}-1) -1 - 3^{\nu_3} \ge 4(3^{\nu_3}-1) -1 -3^{\nu_3} = 3^{\nu_3+1}-5 > 0.
	\end{equation*}
	So in particular this factor does not vanish. The other factors in Lemma \ref{lem:E65coeff} satisfy
	\[
	\frac{1-\left(\left(\frac{3}{p}\right) p\right)^{\nu_p+1}}{1-\left(\frac{3}{p}\right) p}\neq 0,
	\]
	so the $n$-th Fourier coefficient of $\mathcal E_1$ does not vanish for $3\mid n$.
\end{proof}

Let $a_1(n):=C_{1^{-1}2^{10}3^{-1}4^{-4}}(n)$. Using Lemmas \ref{lem:E65coeff} and \ref{lem:f65identity} and then simplifying with \Cref{L:id}, we obtain the following.
\begin{lem}\label{lem:vanishf651}
	For $n\equiv 2\pmod{3}$, we have $a_1(n)=0$ if and only if $c_1(n)=0$.
\end{lem}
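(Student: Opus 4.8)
The plan is to feed the explicit decomposition of $f_1$ from \Cref{lem:f65identity} into the sieving relations of \Cref{L:id}, and then to prove that the Eisenstein contribution vanishes identically on the class $n\equiv 2\pmod3$.

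First I would read off Fourier coefficients from \Cref{lem:f65identity}. Writing $e_1(n)$ for the $n$-th Fourier coefficient of $\mathcal E_1$ and recalling $c_2(n)=\overline{c_1(n)}$, this gives
\[
a_1(n)=e_1(n)-2\big(1+\sqrt2 i\big)c_1(n)-2\big(1-\sqrt2 i\big)c_2(n).
\]
For $n\equiv 2\pmod3$, \Cref{L:id} gives $c_1(n)+c_2(n)=0$, so $c_2(n)=-c_1(n)$ and the cuspidal terms combine into
\[
a_1(n)=e_1(n)-4\sqrt2 i\,c_1(n).
\]
By the definition of $\gamma_1$ we have $c_1(n)=\sqrt2 i\,\gamma_1(n)$ in this residue class, so $-4\sqrt2 i\,c_1(n)=8\gamma_1(n)$ and therefore
\[
a_1(n)=e_1(n)+8\gamma_1(n),
\]
with $\gamma_1(n)\in\Z$ by \Cref{lem:gammainteger}.

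The heart of the argument is then to show $e_1(n)=0$ for every $n\equiv 2\pmod3$. I would invoke \Cref{lem:E65coeff} with $\nu_3=0$ (legitimate since $3\nmid n$), which kills the $\mathbbm 1_{3\mid n}$ block and expresses $e_1(n)$ as the first factor times the never-vanishing product $\prod_{p\mid m}\frac{1-((\frac3p)p)^{\ord_p(m)+1}}{1-(\frac3p)p}$. Writing $n=2^{\nu_2}m$ with $\gcd(m,6)=1$ and setting $\varepsilon:=(-1)^{\nu_2}$, the congruence $n\equiv 2\pmod3$ forces $m\equiv-\varepsilon\pmod3$, equivalently $\big(\frac{-3}{m}\big)=\chi_{-3}(m)=-\varepsilon$. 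Substituting this, together with $\big(\frac3m\big)=\big(\frac{-1}m\big)\big(\frac{-3}m\big)=-\varepsilon\big(\frac{-1}m\big)$, into the first factor of \Cref{lem:E65coeff} makes its four terms cancel in two pairs,
\[
\tfrac12+\tfrac{\varepsilon}{2}\Big(\tfrac{-3}{m}\Big)=0,\qquad
\varepsilon\,2^{\nu_2+1}\Big(\tfrac{-1}{m}\Big)+2^{\nu_2+1}\Big(\tfrac3m\Big)=0,
\]
so that $e_1(n)=0$.

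Combining the two steps gives $a_1(n)=8\gamma_1(n)$ for $n\equiv2\pmod3$, whence $a_1(n)=0$ if and only if $\gamma_1(n)=0$, which by the defining relation $c_1(n)=\sqrt2 i\,\gamma_1(n)$ is equivalent to $c_1(n)=0$, as claimed. The only genuinely delicate point is the bookkeeping of Kronecker symbols in the third paragraph: one must correctly translate $n\equiv2\pmod3$ into the sign $\chi_{-3}(m)=-(-1)^{\nu_2}$ and track the parity of $\nu_2$. Everything else is direct substitution, and I do not anticipate an obstacle beyond this sign accounting.
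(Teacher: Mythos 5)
Your proposal is correct and follows exactly the route the paper indicates for this lemma: substitute the decomposition of \Cref{lem:f65identity}, use \Cref{L:id} to reduce the cuspidal part to $-4\sqrt2 i\,c_1(n)=8\gamma_1(n)$, and check via \Cref{lem:E65coeff} (with $\nu_3=0$ and $\chi_{-3}(m)=-(-1)^{\nu_2}$) that the Eisenstein coefficient vanishes identically for $n\equiv 2\pmod 3$. The sign bookkeeping in your cancellation of the four terms is accurate, so there is nothing to add.
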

\Cref{lem:vanishf651} and \Cref{prop:f3612vanish} directly give the easy direction of \Cref{thm:L65} for $a_1(n)$.
\begin{lem}\label{lem:conjf651easy}
	If $n\equiv 2\pmod{3}$ and if there exists $p\equiv 3\pmod{4}$ with $\ord_p(n)$ odd, then $a_1(n)=0$.
\end{lem}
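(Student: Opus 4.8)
The plan is to deduce this lemma immediately by chaining the two results that precede it, so essentially no new work is required. First I would observe that the second hypothesis---the existence of a prime $p\equiv 3\pmod{4}$ with $\ord_p(n)$ odd---is precisely one of the two sufficient conditions for vanishing in \Cref{prop:f3612vanish}. Hence that proposition directly yields $c_1(n)=0$ (the other sufficient condition there, $3\mid n$, is irrelevant here since we are assuming $n\equiv 2\pmod{3}$, but either way the conclusion $c_1(n)=0$ holds as soon as the prime $p$ exists).

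Next I would invoke \Cref{lem:vanishf651}, whose hypothesis $n\equiv 2\pmod{3}$ is exactly the first assumption of the present lemma. That lemma gives the equivalence $a_1(n)=0$ if and only if $c_1(n)=0$ in this residue class. Combining the two steps---$c_1(n)=0$ from \Cref{prop:f3612vanish}, and the equivalence from \Cref{lem:vanishf651}---forces $a_1(n)=0$, which is the desired conclusion.

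There is no real obstacle to overcome in this argument: all of the substantive content has already been packaged into \Cref{prop:f3612vanish} (the Hecke-theoretic analysis showing when the newform coefficients $c_1$ vanish) and into \Cref{lem:vanishf651} (the reduction, valid in the residue class $n\equiv 2\pmod{3}$, of the eta-quotient coefficients $a_1$ to the newform coefficients $c_1$, after cancellation of the Eisenstein contribution via \Cref{L:id}). Thus the proof is a two-line deduction, and the only point worth stating carefully is that both hypotheses of the lemma are needed simultaneously: the congruence $n\equiv 2\pmod{3}$ to apply \Cref{lem:vanishf651}, and the existence of $p$ to apply \Cref{prop:f3612vanish}.
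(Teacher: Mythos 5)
Your proposal is correct and is exactly the paper's argument: the paper likewise deduces the lemma immediately by combining \Cref{lem:vanishf651} (the equivalence $a_1(n)=0\Leftrightarrow c_1(n)=0$ for $n\equiv 2\pmod 3$) with \Cref{prop:f3612vanish} (which gives $c_1(n)=0$ from the existence of the prime $p$). Nothing further is needed.
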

We may now conclude the first half of Theorem \ref{thm:L65}, using Corollary \ref{cor:f651-not2mod3} and \Cref{prop:f3612vanish}.
\begin{thm}\label{T:L651}
	We have $a_1(n)=0$ if and only if $n\equiv 2\pmod{3}$ and there exists a prime $p\equiv 3\pmod{4}$ for which $\ord_p(n)$ is odd.
\end{thm}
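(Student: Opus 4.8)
The plan is to assemble the statement from the results already established in this subsection, treating the two directions separately and reducing everything to the previously proved characterization of the vanishing of $c_1$.

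For the implication that the stated conditions force $a_1(n)=0$, nothing new is required: this is exactly the content of \Cref{lem:conjf651easy}, so I would simply cite it.

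For the converse, I would suppose that $a_1(n)=0$ and proceed in three steps. First I would pin down the residue of $n$ modulo $3$: since $a_1(n)$ is the $n$-th Fourier coefficient of $f_1$, the contrapositive of \Cref{cor:f651-not2mod3} shows that vanishing can only occur when $n\equiv 2\pmod{3}$, so this congruence is automatic. Having established $n\equiv 2\pmod{3}$, I would invoke \Cref{lem:vanishf651}, which in this residue class identifies the vanishing of $a_1(n)$ with the vanishing of $c_1(n)$; hence $c_1(n)=0$. Finally, \Cref{prop:f3612vanish} characterizes $c_1(n)=0$ as occurring precisely when $3\mid n$ or when there exists a prime $p\equiv 3\pmod{4}$ with $\ord_p(n)$ odd. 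Since $n\equiv 2\pmod{3}$ rules out $3\mid n$, the second alternative must hold, which is exactly the desired conclusion.

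The one point requiring care, rather than a genuine obstacle, is the interplay of the two branches in \Cref{prop:f3612vanish}: one must use the congruence $n\equiv 2\pmod{3}$ to discard the $3\mid n$ branch, so that the characterization of $c_1(n)=0$ collapses to the single condition in the theorem. All of the substantive work, namely the lower bounds on $G_1$ and the finite computer check underlying \Cref{cor:f651-not2mod3}, together with the Hecke-relation induction behind \Cref{prop:f3612vanish}, has already been carried out, so this final step is purely a matter of combining the pieces in the correct logical order.
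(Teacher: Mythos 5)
Your proposal is correct and follows essentially the same route as the paper, which derives the theorem by combining \Cref{lem:conjf651easy}, the contrapositive of \Cref{cor:f651-not2mod3}, \Cref{lem:vanishf651}, and \Cref{prop:f3612vanish} in exactly the order you describe. The observation that $n\equiv 2\pmod{3}$ eliminates the $3\mid n$ branch of \Cref{prop:f3612vanish} is the right (and only) point of care.
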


\subsection{The case $1^72^{-2}3^{-1}$}

Since
\[
	f_2(z):=\frac{\eta^7(z)}{\eta^2(2z)\eta(3z)}
\]
is not a cusp form, we define a corresponding Eisenstein series
\begin{align*}
	&\mathcal E_2(z):=1+\sum_{n\ge1}\left( -\frac{1}{2}\sum_{d\mid n} \left(\frac{12}{d}\right)d -\frac{1}{2}\sum_{d\mid n} \left(\frac{-3}{\frac{n}{d}}\right)\left(\frac{-4}{d}\right) d + \sum_{d\mid n} \left(\frac{-4}{\frac{n}{d}}\right)\left(\frac{-3}{d}\right)d+\hspace{-.05cm}\sum_{d\mid n} \left(\frac{12}{\frac{n}{d}}\right)d\right.\\
	&+\frac{3}{2}\sum_{d\mid \frac{n}{3}} \left(\frac{12}{d}\right)d -\frac{9}{2}\sum_{d\mid \frac{n}{3}} \left(\frac{-3}{\frac{n}{3d}}\right)\left(\frac{-4}{d}\right) d +3 \sum_{d\mid \frac{n}{3}} \left(\frac{-4}{\frac{n}{3d}}\right)\left(\frac{-3}{d}\right)d -9\sum_{d\mid \frac{n}{3}} \left(\frac{12}{\frac{n}{3d}}\right)d+\sum_{d\mid \frac{n}{2}} \left(\frac{12}{d}\right)d \\
	& -\!\sum_{d\mid \frac{n}{2}} \left(\frac{-3}{\frac{n}{2d}}\right)\!\left(\frac{-4}{d}\right)\!d +4 \sum_{d\mid \frac{n}{2}} \left(\frac{-4}{\frac{n}{2d}}\right)\!\!\left(\frac{-3}{d}\right)\!d -4\sum_{d\mid \frac{n}{2}} \left(\frac{12}{\frac{n}{2d}}\right)\!d- 3\sum_{d\mid \frac{n}{6}}\left(\frac{12}{d}\right)\!d -9\sum_{d\mid \frac{n}{6}} \!\left(\frac{-3}{\frac{n}{6d}}\right)\!\!\left(\frac{-4}{d}\right)\!d\\
	&\left.   +12\sum_{d\mid \frac{n}{6}} \left(\frac{-4}{\frac{n}{6d}}\right)\left(\frac{-3}{d}\right)d +36\sum_{d\mid \frac{n}{6}} \left(\frac{12}{\frac{n}{6d}}\right)d \right)q^n.
\end{align*}
A direct calculation shows the following.
\begin{lem}\label{lem:E652coeff}
	Suppose that $\nu_2,\nu_3\in\N_0$ and $m\in\N$ with $\gcd(m,6)=1$. Then the $2^{\nu_2}3^{\nu_3} m$-th Fourier coefficient of $\mathcal E_2$ equals
	\begin{align*}
		&\bigg(-\frac12-\frac{3^{{\nu_3}}}{2}\left(\frac{-3}{2^{{\nu_2}}m}\right)\left(\frac{-1}{3^{\nu_3}}\right)+ 2^{\nu_2}\left(\frac{-3}{2^{\nu_2}}\right)\left(\frac{-1}{3^{\nu_3} m}\right) + 2^{\nu_2} 3^{\nu_3} \left(\frac3m\right)\\
		&\ \ + \mathbbm{1}_{3\mid n} \left(\frac32 - \frac12 3^{{\nu_3}+1}\left(\frac{-3}{2^{{\nu_2}}m}\right) \left(\frac{-1}{3^{{\nu_3}-1}}\right) + 3 \cdot 2^{\nu_2} \left(\frac{-3}{2^{\nu_2}}\right) \left(\frac{-1}{3^{{\nu_3}-1} m}\right) - 2^{\nu_2} 3^{{\nu_3}+1} \left(\frac3m\right)\right)\\
		&\ \ + \mathbbm{1}_{2\mid n} \left(1-3^{\nu_3}\left(\frac{-3}{2^{{\nu_2}-1}m}\right)\left(\frac{-1}{3^{\nu_3}}\right) + 2^{{\nu_2}+1} \left(\frac{-3}{2^{{\nu_2}-1}}\right)\left(\frac{-1}{3^{\nu_3} m}\right) - 2^{{\nu_2}+1}3^{{\nu_3}}\left(\frac3m\right)\right)\\
		&\ \ + \mathbbm{1}_{6\mid n} \left(-3-3^{{\nu_3}+1} \left(\frac{-3}{2^{{\nu_2}-1}m}\right) \left(\frac{-1}{3^{{\nu_3}-1}}\right) + 2^{{\nu_2}+1} 3\left(\frac{-3}{2^{{\nu_2}-1}}\right) \left(\frac{-1}{3^{{\nu_3}-1} m}\right)+2^{{\nu_2}+1}3^{{\nu_3}+1}\left(\frac3m\right)\right)\bigg)\\
		&\quad\quad\times \prod_{p\mid m} \frac{1-\left(\left(\frac3p\right)p\right)^{\ord_p(m)+1}}{1-\left(\frac3p\right)p}.
	\end{align*}
\end{lem}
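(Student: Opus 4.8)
The plan is to read the Fourier coefficient off directly from the $q$-series definition of $\mathcal{E}_2$, by recognizing each of its sixteen inner sums as a twisted divisor sum. Writing $\sigma_{\chi,\psi}(t):=\sum_{d\mid t}\chi(t/d)\psi(d)d$, the four families of sums in the definition are exactly $\sigma_{\chi_1,\chi_{12}}$, $\sigma_{\chi_{-3},\chi_{-4}}$, $\sigma_{\chi_{-4},\chi_{-3}}$, and $\sigma_{\chi_{12},\chi_1}$, each evaluated at $n$, $n/3$, $n/2$, and $n/6$ (the latter three contributing only when $3\mid n$, $2\mid n$, $6\mid n$, respectively). Since every $\sigma_{\chi,\psi}$ is multiplicative, for $n=2^{\nu_2}3^{\nu_3}m$ with $\gcd(m,6)=1$ I would factor each such sum as a product of its local factors at $2$, at $3$, and at the primes dividing $m$. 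This is the same mechanism already used to establish the analogous formula in \Cref{lem:E65coeff}.

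The key simplifying observation is that all four pairs satisfy $\chi\psi=\chi_{12}$. Hence for a prime $p\mid m$ (so $p\nmid 6$), using $\chi_{12}(p)=(\frac{3}{p})$, one computes
\[
\sigma_{\chi,\psi}\left(p^\nu\right)=\epsilon_{\chi,\psi}(p)^{\nu}\sum_{j=0}^{\nu}\Big(\big(\tfrac{3}{p}\big)p\Big)^{j}=\epsilon_{\chi,\psi}(p)^{\nu}\,\frac{1-\big(\big(\tfrac{3}{p}\big)p\big)^{\nu+1}}{1-\big(\tfrac{3}{p}\big)p},
\]
where $\epsilon_{\chi,\psi}(p)\in\{1,(\frac{-3}{p}),(\frac{-1}{p}),(\frac{3}{p})\}$ according to the four pairs. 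Taking the product over $p\mid m$ pulls out the common factor $\prod_{p\mid m}\frac{1-((\frac{3}{p})p)^{\ord_p(m)+1}}{1-(\frac{3}{p})p}$ from the statement, times one of the global signs $1$, $(\frac{-3}{m})$, $(\frac{-1}{m})$, $(\frac{3}{m})$ (by multiplicativity of the Jacobi symbol). For the ramified primes $p\in\{2,3\}$ the local sums collapse: since $\chi_{12}(2)=\chi_{12}(3)=0$, $\chi_{-4}(2)=0$, and $\chi_{-3}(3)=0$, only a single divisor survives in each, yielding explicit factors such as $\sigma_{\chi_1,\chi_{12}}(2^{\nu_2})=1$, $\sigma_{\chi_{12},\chi_1}(2^{\nu_2})=2^{\nu_2}$, $\sigma_{\chi_{-4},\chi_{-3}}(2^{\nu_2})=(\frac{-3}{2^{\nu_2}})2^{\nu_2}$, and their analogues at $3$.

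It then remains to assemble these local factors against the prefactors in the definition of $\mathcal{E}_2$. The four unshifted sums produce the base bracket; substituting $\nu_3\mapsto\nu_3-1$ (valid precisely when $3\mid n$) in the $n/3$-sums and folding in the prefactors $\tfrac32,-\tfrac92,3,-9$ reproduces the $\mathbbm{1}_{3\mid n}$ bracket, where for instance $-\tfrac92\cdot 3^{\nu_3-1}=-\tfrac12 3^{\nu_3+1}$; likewise the $n/2$-sums (with $\nu_2\mapsto\nu_2-1$) give the $\mathbbm{1}_{2\mid n}$ bracket and the $n/6$-sums give the $\mathbbm{1}_{6\mid n}$ bracket. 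After factoring out the common product over $p\mid m$, matching each surviving term against the statement is a finite check.

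The only genuine difficulty is bookkeeping: one must track the Kronecker-symbol values $(\frac{-3}{2})=(\frac{-1}{3})=-1$ together with the sign flips and power shifts induced by replacing $n$ with $n/2$, $n/3$, or $n/6$, ensuring that every factor of $2$, every factor of $3$, and every quadratic symbol lands in the stated form. There is no conceptual obstacle beyond verifying that the four global signs and the ramified-prime factors combine exactly as claimed.
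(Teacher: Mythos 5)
Your proposal is correct and matches the paper's approach: the paper justifies this lemma only with the phrase ``a direct calculation,'' and the calculation it has in mind is exactly the one you describe --- factoring each twisted divisor sum $\sigma_{\chi,\psi}$ multiplicatively, using $\chi\psi=\chi_{12}$ to extract the common local factor $\frac{1-((\frac{3}{p})p)^{\ord_p(m)+1}}{1-(\frac{3}{p})p}$ at each $p\mid m$ together with the global sign $\chi(m)$, and collapsing the ramified local sums at $2$ and $3$. Your sample evaluations (e.g.\ $\sigma_{\chi_{-4},\chi_{-3}}(2^{\nu_2})=(\frac{-3}{2^{\nu_2}})2^{\nu_2}$) check out against the stated formula, so the remaining work is indeed only the finite bookkeeping you identify.
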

We directly obtain the following lemma.
\begin{lem}\label{lem:E652}
We have
\begin{align*}
\mathcal E_2&=-\frac{1}{4}E_{2,\chi_{1},\chi_{12}}-\frac{1}{4}E_{2,\chi_{-3},\chi_{-4}}+\frac{1}{2}E_{2,\chi_{-4},\chi_{-3}}+\frac{1}{2}E_{2,\chi_{12},\chi_{1}}\\
&\hspace{.5cm}+\frac{3}{4}E_{2,\chi_{1},\chi_{12}}\big|V_3-\frac{9}{4}E_{2,\chi_{-3},\chi_{-4}}\big|V_3+\frac{3}{2}E_{2,\chi_{-4},\chi_{-3}}\big|V_3-\frac{9}{2}E_{2,\chi_{12},\chi_{1}}\big|V_3\\
&\hspace{.5cm}+\frac{1}{2}E_{2,\chi_{1},\chi_{12}}\big|V_2-\frac{1}{2}E_{2,\chi_{-3},\chi_{-4}}\big|V_2+2E_{2,\chi_{-4},\chi_{-3}}\big|V_2-2E_{2,\chi_{12},\chi_{1}}\big|V_2\\
&\hspace{.5cm}-\frac{3}{2}E_{2,\chi_{1},\chi_{12}}\big|V_6-\frac{9}{2}E_{2,\chi_{-3},\chi_{-4}}\big|V_6+6E_{2,\chi_{-4},\chi_{-3}}\big|V_6+18E_{2,\chi_{12},\chi_{1}}\big|V_6.
\end{align*}
\end{lem}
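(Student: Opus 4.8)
The plan is to prove the asserted identity directly by comparing Fourier coefficients, since both sides are given by explicit $q$-expansions: the left-hand side $\mathcal{E}_2$ is defined by its Fourier series, while each summand $E_{2,\chi,\psi}\big|V_\delta$ on the right-hand side has a known expansion obtained from the definition of $E_{\kappa,\chi,\psi}$ in Section \ref{sec:Eisenstein} together with the action of $V_\delta$. Recall that for $n\geq 1$ the coefficient of $q^n$ in $E_{2,\chi,\psi}$ equals $2\sum_{d\mid n}\chi(\tfrac{n}{d})\psi(d)d$, and that $E_{2,\chi,\psi}\big|V_\delta(z)=E_{2,\chi,\psi}(\delta z)$ sends $q^n\mapsto q^{\delta n}$; hence for $n\geq 1$ the coefficient of $q^n$ in $E_{2,\chi,\psi}\big|V_\delta$ is $2\,\mathbbm{1}_{\delta\mid n}\sum_{d\mid n/\delta}\chi(\tfrac{n}{\delta d})\psi(d)d$, while its constant term is unchanged and equals $\mathbbm{1}_{\chi=\chi_1}L(-1,\psi)$. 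Since matching every Fourier coefficient of two holomorphic functions on $\H$ proves their equality, this reduces the lemma to a bookkeeping check (no valence formula is needed here).

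For $n\geq 1$ I would group the sixteen divisor sums appearing in the definition of $\mathcal{E}_2$ according to whether they run over $d\mid n$, $d\mid \tfrac{n}{3}$, $d\mid \tfrac{n}{2}$, or $d\mid \tfrac{n}{6}$; these four families correspond precisely to $\delta\in\{1,3,2,6\}$. Within each family, the four character pairs $(\chi,\psi)\in\{(\chi_1,\chi_{12}),(\chi_{-3},\chi_{-4}),(\chi_{-4},\chi_{-3}),(\chi_{12},\chi_1)\}$ reproduce the four summands, and the factor $2$ in the expansion of $E_{2,\chi,\psi}$ exactly accounts for the fact that the coefficient in the claimed formula is half the coefficient of the matching divisor sum in $\mathcal{E}_2$ (for instance $-\tfrac14 E_{2,\chi_1,\chi_{12}}$ contributes $-\tfrac12\sum_{d\mid n}(\tfrac{12}{d})d$, the leading term of $\mathcal{E}_2$). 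Running through all four families in this way matches every coefficient of $q^n$ with $n\geq 1$.

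It remains to match the constant terms. The constant term of $\mathcal{E}_2$ is $1$, while on the right-hand side only the series with $\chi=\chi_1$, namely the four translates $E_{2,\chi_1,\chi_{12}}\big|V_\delta$, contribute at $n=0$, each with constant term $L(-1,\chi_{12})$ since $V_\delta$ fixes the constant term. Summing the relevant coefficients gives $(-\tfrac14+\tfrac34+\tfrac12-\tfrac32)L(-1,\chi_{12})=-\tfrac12 L(-1,\chi_{12})$. I would then evaluate $L(-1,\chi_{12})=-\tfrac12 B_{2,\chi_{12}}=-2$ using the generalized Bernoulli number $B_{2,\chi_{12}}=4$, so that this equals $1$, as required.

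The argument is entirely mechanical once the expansion of $E_{2,\chi,\psi}\big|V_\delta$ is recorded, and the only point requiring genuine care is the constant-term bookkeeping: one must confirm both that the non-principal pairs contribute nothing at $n=0$ and that the principal pairs combine, with the correct rational coefficients and the value $L(-1,\chi_{12})=-2$, to reproduce the constant term $1$ of $\mathcal{E}_2$. This is the main (though still modest) obstacle.
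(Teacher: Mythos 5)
Your proposal is correct and matches the paper's own treatment, which likewise obtains the identity by a direct term-by-term comparison of $q$-expansions (the paper simply states that the lemma follows "directly"). Your coefficient bookkeeping checks out, including the constant-term computation $(-\tfrac14+\tfrac34+\tfrac12-\tfrac32)L(-1,\chi_{12})=-\tfrac12\cdot(-2)=1$ via $B_{2,\chi_{12}}=4$.
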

Using Lemmas \ref{lem:eta}, \ref{lem:E652}, \ref{lem:Eisenstein}, and \ref{lem:valence}, we obtain an identity for $f_2$.
\begin{lem}\label{lem:f652}
We have
\begin{equation*}
f_2
=\mathcal E_2 -4\Big(1-\sqrt{2}i\Big)g_1 -4\Big(1+\sqrt{2}i\Big)g_2.
\end{equation*}
\end{lem}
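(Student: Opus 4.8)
The plan is to show that both sides of the claimed identity are holomorphic modular forms of weight $2$ on $\Gamma_0(72)$ with character $\chi_{12}$, and then to invoke the valence formula (\Cref{lem:valence}) to reduce the identity to a finite comparison of Fourier coefficients. This is the same mechanism used in the earlier identity lemmas, the only novelty being that $f_2$ carries a nontrivial Eisenstein part, which is precisely what $\mathcal E_2$ is designed to absorb.

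First I would establish that $f_2\in M_2(\Gamma_0(72),\chi_{12})$ via \Cref{lem:eta}. Writing $f_2=\prod_{\delta\mid 72}\eta(\delta z)^{r_\delta}$ with $r_1=7$, $r_2=-2$, $r_3=-1$ and $r_\delta=0$ otherwise, the weight is $\frac12(7-2-1)=2$. One then checks the two congruence conditions: $\sum_\delta\delta r_\delta=7-4-3=0\equiv0\pmod{24}$, and with $N=72$ one has $\sum_\delta\frac{N}{\delta}r_\delta=504-72-24=408=17\cdot 24\equiv0\pmod{24}$. It is worth noting that $N=36$ fails the second condition (there $\sum_\delta\frac{N}{\delta}r_\delta=204\equiv12\pmod{24}$), which is exactly why one must work at level $72$ even though $g_1,g_2$ already live at level $36$. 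Since $s=\prod_\delta\delta^{r_\delta}=2^{-2}3^{-1}$ and $\left(\frac{2^{-2}3^{-1}}{n}\right)=\left(\frac{12}{n}\right)$, \Cref{lem:eta} gives $f_2\in M_2(\Gamma_0(72),\chi_{12})$; as always with eta-quotients carrying negative exponents, the nonnegativity of the orders at all cusps (i.e.\ genuine holomorphy, so that \Cref{lem:valence} applies) is part of this verification.

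Next I would check that the right-hand side lies in the same space. By \Cref{lem:E652} together with \Cref{lem:Eisenstein}(2)—applicable because none of the character pairs occurring in $\mathcal E_2$ equals $(\chi_1,\chi_1)$, so no quasimodular correction term intervenes—each summand $E_{2,\chi,\psi}\big|V_d$ lies in $M_2(\Gamma_0(N_\chi N_\psi d),\chi\psi)$ with $N_\chi N_\psi d\mid 72$ and $\chi\psi=\chi_{12}$, so that $\mathcal E_2\in M_2(\Gamma_0(72),\chi_{12})$. Since $g_1,g_2\in S_2(\Gamma_0(36),\chi_{12})\subseteq S_2(\Gamma_0(72),\chi_{12})$, the entire right-hand side lies in $M_2(\Gamma_0(72),\chi_{12})$.

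Finally I would apply \Cref{lem:valence} to $h:=f_2-\mathcal E_2+4(1-\sqrt2 i)g_1+4(1+\sqrt2 i)g_2\in M_2(\Gamma_0(72),\chi_{12})$. The valence bound is $\frac{72\cdot 2}{12}\prod_{p\mid 72}\bigl(1+\tfrac1p\bigr)=12\cdot\frac32\cdot\frac43=24$, so it suffices to verify that the Fourier coefficients of $f_2$ agree with those of $\mathcal E_2-4(1-\sqrt2 i)g_1-4(1+\sqrt2 i)g_2$ for all $0\le n\le 24$, a direct computation from the eta-product expansion of $f_2$ and the explicit $q$-expansions of $\mathcal E_2$, $g_1$, and $g_2$. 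There is no deep obstacle here; the one point to watch is the bookkeeping with the $\Q(\sqrt{-2})$-coefficients. Because $c_2(n)=\overline{c_1(n)}$ and the two prefactors are complex conjugates, $4(1-\sqrt2 i)g_1+4(1+\sqrt2 i)g_2$ has rational Fourier coefficients, and these prefactors are exactly the ones forcing the cuspidal part of $f_2$ to be real while matching its first few coefficients; the coefficient check up to $n=24$ then upgrades this to the full identity.
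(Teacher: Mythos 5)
Your proposal is correct and follows essentially the same route as the paper, which likewise derives this identity by combining \Cref{lem:eta} (modularity of $f_2$ at level $72$), \Cref{lem:E652} and \Cref{lem:Eisenstein} (placing $\mathcal E_2$ in $M_2(\Gamma_0(72),\chi_{12})$), and \Cref{lem:valence} with the sturm-type bound $24$. Your level computation, character identification, and the observation that level $36$ fails the second congruence condition are all consistent with the paper's setup.
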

We next classify those $n$ for which $a_2(n):=C_{1^{7}2^{-2}3^{-1}}(n)=0$.
\begin{thm}\label{thm:c(1,7),(2,-2),(3,-1)}
	We have $a_2(n)=0$ if and only if $n\equiv 2\pmod{3}$ and there exists $p\equiv 3\pmod{4}$ for which $\ord_p(n)$ is odd.
\end{thm}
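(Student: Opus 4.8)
The plan is to mirror the proof of \Cref{T:L651} from the previous subsection, using the splitting of $f_2$ into its Eisenstein and cuspidal parts provided by \Cref{lem:f652}. Writing $f_2 = \mathcal E_2 - 4(1-\sqrt 2 i)g_1 - 4(1+\sqrt 2 i)g_2$, the $n$-th Fourier coefficient splits as $a_2(n) = e_2(n) + \big(-4(c_1(n)+c_2(n)) + 4\sqrt 2 i(c_1(n)-c_2(n))\big)$, where $e_2(n)$ is the $n$-th coefficient of $\mathcal E_2$ given explicitly in \Cref{lem:E652coeff}. By \Cref{L:id}, the cuspidal contribution vanishes when $3\mid n$, equals $-4(c_1(n)+c_2(n))$ when $n\equiv 1\pmod 3$, and equals $4\sqrt 2 i(c_1(n)-c_2(n))$ when $n\equiv 2\pmod 3$. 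In the last case $c_2(n)=-c_1(n)$ and $c_1(n)=\sqrt 2 i\,\gamma_1(n)$, so this contribution simplifies to $-16\gamma_1(n)$. I would organize the argument according to $n\bmod 3$.

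First I would establish the analogue of \Cref{cor:f651-not2mod3}: for $n\not\equiv 2\pmod 3$ one has $a_2(n)\neq 0$. When $3\mid n$ the cuspidal part vanishes, so $a_2(n)=e_2(n)$, and I would show $e_2(n)\neq 0$ directly from \Cref{lem:E652coeff} via a triangle-inequality estimate on the first factor (as in the $3\mid n$ case of \Cref{cor:f651-not2mod3}), using that each Euler factor $\frac{1-((\frac3p)p)^{\ord_p(m)+1}}{1-(\frac3p)p}$ is nonzero. When $n\equiv 1\pmod 3$ I would bound the cuspidal part by \Cref{thm:Deligne}, obtaining a bound $\ll d(n)\sqrt n$, and then show the Eisenstein part dominates by introducing a multiplicative quantity $G_2(n)$ analogous to $G_1(n)$ in \eqref{eqn:G1def}, bounding it prime-power by prime-power through the same monotonicity argument used for the functions $g_\nu$ and $f_{\aa,\nu}$, reducing to finitely many $n$, and finishing with a finite computer check.

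Next, for $n\equiv 2\pmod 3$, the crucial point is that the Eisenstein coefficient $e_2(n)$ vanishes identically. Here $\nu_3=0$ and $3\nmid n$ kill the $\mathbbm 1_{3\mid n}$ and $\mathbbm 1_{6\mid n}$ terms of \Cref{lem:E652coeff}, and evaluating the characters using $\chi_{12}=\chi_{-3}\chi_{-4}$ together with $(\frac{-3}{m})=(-1)^{\nu_2+1}$ (matching $m\equiv(-1)^{\nu_2+1}\pmod 3$) collapses the remaining first-factor terms, including the $\mathbbm 1_{2\mid n}$ contribution, to $0$; this is exactly the cancellation underlying \Cref{lem:vanishf651}. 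Consequently $a_2(n)=-16\gamma_1(n)$, so $a_2(n)=0$ if and only if $\gamma_1(n)=0$, i.e.\ $c_1(n)=0$. Since $3\nmid n$, \Cref{prop:f3612vanish} then gives that $c_1(n)=0$ if and only if there exists a prime $p\equiv 3\pmod 4$ with $\ord_p(n)$ odd. Combining the two cases yields the theorem.

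I expect the main obstacle to be the quantitative estimate in the $n\equiv 1\pmod 3$ subcase: constructing the correct comparison function $G_2$ with the right threshold (dictated by the constant in the Deligne bound for the cuspidal part and the leading Eisenstein growth in \Cref{lem:E652coeff}), verifying the prime-power lower bounds including the slightly modified behaviour at $p=2$ stemming from the $\mathbbm 1_{2\mid n}$ terms, and pinning down the finite range of exceptional $n$ for the computer verification. By contrast, the vanishing of $e_2(n)$ for $n\equiv 2\pmod 3$ is a finite character computation that should be routine once the parities of $\nu_2$ are separated.
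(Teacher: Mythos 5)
Your proposal follows essentially the same route as the paper's proof: splitting $a_2(n)$ via \Cref{lem:f652} into the Eisenstein coefficient from \Cref{lem:E652coeff} and the cuspidal contribution controlled by \Cref{L:id}, showing the Eisenstein part vanishes for $n\equiv 2\pmod{3}$ so that $a_2(n)=8\sqrt{2}\,i\,c_1(n)=-16\gamma_1(n)$ and \Cref{prop:f3612vanish} applies, handling $3\mid n$ by a direct lower bound on the Eisenstein coefficient, and handling $n\equiv 1\pmod 3$ by playing the Deligne bound against a multiplicative minorant $G_2$ with a finite computer check. All the key steps, including the identified difficulties, match the paper's argument.
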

\begin{proof}
	As above, we write $n=2^{\nu_2} 3^{\nu_3}m$ with $\gcd(m,6)=1$. For ${\nu_3}=0$, Lemma \ref{lem:E652coeff} implies that the $2^{{\nu_2}}m$-th Fourier coefficient of $\mathcal E_2$ is $\prod_{p\mid m} \frac{1-((\frac3p)p)^{\ord_p(m)+1}}{1-(\frac3p)p}$ times  (note  $(\frac{-3}{2})=-1$)
	\begin{multline}\label{eqn:E2constevalmu0}
		-\frac12\left(1+\left(\frac{-3}{n}\right)\right)+ 2^{\nu_2}\left(\frac{-1}{m}\right)\left(\frac{-3}{2^{\nu_2}}\right)\left( 1+\left(\frac{-3}{n}\right)\right)\\
		+ \mathbbm{1}_{2\mid n}\left(1+\left(\frac{-3}{n}\right) + 2^{{\nu_2}+1} \left(\frac{-3}{2^{{\nu_2}-1}}\right)\left(\frac{-1}{m}\right)\left(1+\left(\frac{-3}{n}\right)\right)\right).
	\end{multline}
	In particular, if $n\equiv 2\pmod{3}$, then $(\frac{-3}{n})=-1$ and we see that the Fourier coefficient of $\mathcal E_2$ vanishes. Thus, for $n\equiv 2\pmod{3}$, Lemma \ref{lem:f652} implies that
	\begin{equation*}
		a_2(n) = -4\left(c_1(n)+c_2(n)\right) +4\sqrt{2}i\left(c_1(n)-c_2(n)\right).
	\end{equation*}
	Using Lemma \ref{L:id}, one easily obtains that for $n\equiv 2\pmod{3}$
	\[
		a_2(n)=8\sqrt{2}ic_1(n).
	\]
	Therefore $a_2(n)=0$ if and only if $c_1(n)=0$ for $n\equiv 2\pmod{3}$.
 By Proposition \ref{prop:f3612vanish}, since $n\equiv 2\pmod{3}$ (and hence $3\nmid n$ in particular) this occurs if and only if there exists a prime $p\equiv 3\pmod{4}$ for which $\ord_p(n)$ is odd. This gives the claim for $n\equiv 2\pmod{3}$.

	For $n\equiv 1\pmod{3}$, we have $(\frac{-3}{n})=1$, so, after simplifying \eqref{eqn:E2constevalmu0}, \Cref{lem:E652coeff} implies that the $2^{{\nu_2}}m$-th Fourier coefficient of $\mathcal E_2$ is
	\[
		\prod_{p\mid m} \frac{1-\left(\left(\frac3p\right)p\right)^{\ord_p(m)+1}}{1-\left(\frac3p\right)p}\left( -1+2^{{\nu_2}+1}\left(\frac{-1}{m}\right)\left(\frac{-3}{2^{{\nu_2}}}\right)+2\mathbbm{1}_{2\mid n}+2^{{\nu_2}+2}\mathbbm{1}_{2\mid n}\left(\frac{-3}{2^{{\nu_2}-1}}\right)\left(\frac{-1}{m}\right)\right).
	\]
	We then note that
	\begin{multline*}
		\left| -1+2^{{\nu_2}+1}\left(\frac{-1}{m}\right)\left(\frac{-3}{2^{{\nu_2}}}\right)+\mathbbm{1}_{2\mid n}\left(2+2^{{\nu_2}+2} \left(\frac{-3}{2^{{\nu_2}-1}}\right)\left(\frac{-1}{m}\right)\right)\right|\\
		=
		\begin{cases}
			\left|2\left(\frac{-1}{m}\right)-1\right|\geq 1 &\text{if }2\nmid n,\\
			\left|\left(\frac{-1}{m}\right)\left(\frac{-3}{2^{{\nu_2}}}\right)\left(2^{{\nu_2}+1}-2^{{\nu_2}+2}\right) +1\right|\geq 2^{{\nu_2}+1}-1&\text{if }2\mid n.
		\end{cases}
	\end{multline*}
	Combining with \eqref{eqn:abseval}, the absolute value of the $2^{{\nu_2}}m$-th Fourier coefficient of $\mathcal E_2$ is bounded from below by
	\begin{equation}\label{eqn:Eabs}
		\left(2^{{\nu_2}+1}-1\right) \prod_{p\mid m} F_p(\ord_p(m)).
	\end{equation}
Plugging \Cref{L:id} in to evaluate the cuspidal part of \Cref{lem:f652}, we conclude for $n\equiv 1\pmod{3}$ that $a_2(n)\neq 0$ if the expression in \eqref{eqn:Eabs} is greater than $8|c_1(n)|$.
	Using \Cref{thm:Deligne}, the absolute value of the Fourier coefficient of the cuspidal part is bounded from above by $8d(n)\sqrt{n}$.
	We conclude that $a_2(n)\neq 0$ if
\[
		\left(2^{{\nu_2}+1}-1\right) \prod_{p\mid m} F_p(\ord_p(m))> 8d(n)\sqrt{n}.
\]
Defining
\[
	G_2(n):=\frac{2^{\ord_2(n)+1}-1}{(\ord_2(n)+1)2^{\frac{\ord_2(n)}{2}}} \prod_{\substack{p\mid n\\p\neq2}} \frac{F_{p}(\ord_p(n))}{(\ord_p(n)+1)p^{\frac{\ord_p(n)}{2}}}
\]
and rearranging, we conclude that if $G_2(n)> 8$ then $a_2(n)\neq 0$.
By construction, $G_2$ is multiplicative, and $G_2(n)=G_1(n)$ for odd $n\in\N$ (see \eqref{eqn:G1def}), so by \eqref{eqn:fG1bound} we can obtain a bound on $\ord_p(n)$ for $p$ odd after evaluating $f_{\aa,\nu}(x)$. As in \eqref{eqn:f5nux}, we have
\begin{align*}
 f_{10,1}(402)&\geq 0,& f_{10,2}(31)&\geq 0,&f_{10,3}(13)&\geq 0,&f_{10,4}(8)&\geq 0,\\
f_{10,5}(6)&\geq 0,&f_{10,6}(5)&\geq 0.
\end{align*}
Thus \eqref{eqn:fG1bound} yields
\begin{align}
\nonumber G_2\left(p^{\nu}\right)&>  10\text{ for }p\geq 402,\ \nu\in\N,& G_2\left(p^{\nu}\right)&> 10\text{ for }31\leq p< 402,\ \nu\geq 2,\\
\nonumber G_2\left(p^{\nu}\right)&> 10\text{ for }13\leq p<31,\ \nu\geq 3,& G_2\left(p^{\nu}\right)&> 10\text{ for }p=11,\ \nu\geq 4,\\
\label{eqn:f10nux} G_2\left(p^{\nu}\right)&> 10\text{ for }p=7,\nu\geq 5,&G_2\left(p^{\nu}\right)&> 10\text{ for }p=6,\ \nu\geq 6.
\end{align}
\rm
Moreover, if $\#\{p\text{ prime}:p\|n\}\geq 7$, then bounding against the worst-case choice of $7$ primes gives $G_2(n)\geq 8$. Hence we conclude from \eqref{eqn:f10nux} and a direct computation of $G_2(2^{\nu})$ that
\begin{equation}\label{eqn:nexpandG2}
n=2^{\nu_2}\prod_{3<p\leq 401} p^{\nu_p}
\end{equation}
with $0\leq \nu_2\leq 12$, $0\leq \nu_5\leq 5$, $0\leq \nu_7\leq 4$,  $0\leq \nu_{11}\leq 3$, $0\leq \nu_{13}\leq 2$, $0\leq \nu_p\leq 1$ for $p\geq 17$, and $\#\{p: \nu_p=1\}\leq 6$.
We used a computer to evaluate $G_2(n)$ for every such $n\equiv 1\pmod{3}$ of the type \eqref{eqn:nexpandG2}, and find that $G_2(n)>8$ for $n>309400$ with $n\equiv 1\pmod{3}$. It was verified with a computer (running code that completed in a few hours on a standard desktop computer) that $C_{1^72^{-2}3^{-1}}(n)\neq 0$ for $n\leq 309400$ with $n\equiv 1\pmod{3}$, yielding the claim for $n\equiv 1\pmod{3}$.

For $3\mid n$, \Cref{L:id} implies that the Fourier coefficient of the cusp form appearing on the right-hand side of \Cref{lem:f652} vanishes, so by \Cref{lem:f652}, $a_2(n)=0$ if and only if the $n$-th Fourier coefficient of $\mathcal E_2$ vanishes. By Lemma \ref{lem:E652coeff}, this is the case if and only if
	\begin{align}
		\nonumber		&-\frac12-\frac{3^{{\nu_3}}}{2}\left(\frac{-3}{2^{\nu_2}m}\right)\left(\frac{-1}{3^{\nu_3}}\right)+ 2^{\nu_2}\left(\frac{-3}{2^{\nu_2}}\right)\left(\frac{-1}{3^{\nu_3} m}\right) + 2^{\nu_2} 3^{\nu_3} \left(\frac3m\right)\\
		\nonumber		&\qquad+ \left(\frac32 - \frac12 3^{{\nu_3}+1}\left(\frac{-3}{2^{\nu_2}m}\right) \left(\frac{-1}{3^{{\nu_3}-1}}\right) + 3 \cdot 2^{\nu_2} \left(\frac{-3}{2^{\nu_2}}\right) \left(\frac{-1}{3^{{\nu_3}-1} m}\right) - 2^{\nu_2} 3^{{\nu_3}+1} \left(\frac3m\right)\right)\\
		\label{eqn:leadingE652}		&\qquad+ \mathbbm{1}_{2\mid n} \left(1-3^{\nu_3}\left(\frac{-3}{2^{\nu_2-1}m}\right)\left(\frac{-1}{3^{\nu_3}}\right) + 2^{\nu_2+1} \left(\frac{-3}{2^{\nu_2-1}}\right)\left(\frac{-1}{3^{\nu_3} m}\right) - 2^{\nu_2+1}3^{{\nu_3}}\left(\frac3m\right)\right.\\
		\nonumber
		&\qquad\left.  -3-3^{{\nu_3} +1} \left(\frac{-3}{2^{\nu_2-1}m}\right) \left(\frac{-1}{3^{{\nu_3}-1}}\right) + 2^{\nu_2+1} 3\left(\frac{-3}{2^{\nu_2-1}}\right) \left(\frac{-1}{3^{{\nu_3}-1} m}\right) + 2^{\nu_2+1}3^{{\nu_3}+1}\left(\frac3m\right)\right)
	\end{align}
	vanishes. If $2\nmid n$, then $\nu_2=0$ and \eqref{eqn:leadingE652} simplifies as
	\begin{equation*}
		\left(1-2\left(\frac{-1}{n}\right)\right) +3^{{\nu_3}}\left(\frac{3}{m}\right)\left(\left(\frac{-1}{n}\right) - 2\right).
	\end{equation*}
	This vanishes if and only if
	\begin{equation}\label{eqn:3contradiction}
		3^{{\nu_3}}\left(\frac{3}{m}\right)\left(\left(\frac{-1}{n}\right) - 2\right)=-1+2\left(\frac{-1}{n}\right).
	\end{equation}
	Since $3$ divides the left-hand side of \eqref{eqn:3contradiction} (we have ${\nu_3}\geq 1$ because $3\mid n$), it must divide the right-hand side, which can only occur if $(\frac{-1}{n})=-1$, in which case the right-hand side equals $-3$. But then \eqref{eqn:3contradiction} simplifies to
	$3^{{\nu_3}}(\frac{3}{m})(-3)=-3$,
which can only occur if ${\nu_3}=0$, leading to a contradiction.
Hence for $3\mid n$ and $2\nmid n$ we conclude that $a_2(n)\neq0$.

Finally suppose that $2\mid n$. Combining  terms with the same Legendre symbols, \eqref{eqn:leadingE652} becomes (note that $(\frac{-3}{2})=(\frac{-1}{3})=-1$)
	\begin{equation}\label{eqn:3midnsimp}
		-1 +2^{{\nu_2}+1} 3^{\nu_3}\left( -\frac{1}{2^{{\nu_2}+1}} \left(\frac{-3}{2^{{\nu_2}}m}\right)\left(\frac{-1}{3^{\nu_3}}\right)+ \frac{1}{3^{{\nu_3}}} \left(\frac{-3}{2^{\nu_2}}\right)\left(\frac{-1}{3^{\nu_3} m}\right)+  \left(\frac3m\right)\right).
	\end{equation}
Since ${\nu_2},{\nu_3}\in\N$, the absolute value of \eqref{eqn:3midnsimp} is bounded from below by
	\[
		2^{{\nu_2}+1} 3^{\nu_3}\left( 1-\frac{1}{4}-\frac{1}{3} \right) -1 = \frac{5}{12} 2^{{\nu_2}+1}{3^{{\nu_3}}} - 1\geq \frac{5}{12}\cdot 4\cdot 3 -1=4.
	\]
	We conclude that \eqref{eqn:leadingE652} does not vanish, and hence
 $C_{1^72^{-2}3^{-1}}(n)\neq 0$, for all $n$ with $3\mid n$.
\end{proof}
\section{Proof of \Cref{thm:L133}}\label{sec:L133}
\subsection{The case $1^22^34^{-2}$}
In this subsection, we prove the claimed evaluation of the first set appearing in \Cref{thm:L133}.
\begin{prop}\label{prop:L1331}
We have
\[
S_{1^22^34^{-2}}=\left\{n\in\N: n=4^k(8m+7)\text{ for some }k,m\in\N_0\right\}.
\]
\end{prop}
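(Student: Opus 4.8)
The plan is to reduce everything to the classical three--squares theorem by factoring the eta--quotient into a product of three unary theta series and then reading off the (signed) representation count. Recall from the introduction that $\Theta(z)=\sum_{n\in\Z}q^{n^2}=\frac{\eta(2z)^5}{\eta(z)^2\eta(4z)^2}$, and that the companion identity $\sum_{n\in\Z}(-1)^nq^{n^2}=\frac{\eta(z)^2}{\eta(2z)}$ holds (both follow from the Jacobi triple product, or from \Cref{lem:valence} applied in weight $\frac12$). Since $(-1)^{n^2}=(-1)^n$, setting $\Theta^-(z):=\frac{\eta(z)^2}{\eta(2z)}=\sum_{n\in\Z}(-1)^nq^{n^2}$, the purely algebraic factorization
\[
\frac{\eta^2(z)\eta^3(2z)}{\eta^2(4z)}=\frac{\eta(2z)^5}{\eta(z)^2\eta(4z)^2}\left(\frac{\eta(z)^2}{\eta(2z)}\right)^{\!2}=\Theta(z)\,\Theta^-(z)^2
\]
holds as an identity of $q$-series (no modularity input is required here). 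Expanding the product would then give the closed form
\[
C_{1^22^34^{-2}}(n)=\sum_{\substack{\bm n\in\Z^3\\ n_1^2+n_2^2+n_3^2=n}}(-1)^{n_2+n_3}.
\]

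Next I would exploit the parity constraints coming from $n\bmod 4$. Since an odd square is $\equiv1$ and an even square $\equiv0\pmod4$, every representation $n=n_1^2+n_2^2+n_3^2$ has exactly $n\bmod 4$ of its coordinates odd. Writing $r_3(n):=\#\{\bm n\in\Z^3:n_1^2+n_2^2+n_3^2=n\}$ and using that $r_3$ is invariant under all permutations of the coordinates, I would evaluate $C_{1^22^34^{-2}}(n)$ in each residue class:
\begin{itemize}
\item If $n\equiv3\pmod4$, all three coordinates are odd, so $n_2+n_3$ is even and every term contributes $+1$; hence $C_{1^22^34^{-2}}(n)=r_3(n)$.
\item If $n\equiv1\pmod4$ (resp.\ $n\equiv2\pmod4$), exactly one coordinate is odd (resp.\ even); partitioning the representations by the position of this distinguished coordinate gives three classes, each of size $\tfrac13 r_3(n)$ by symmetry, and $(-1)^{n_2+n_3}$ equals $+1$ on exactly one class and $-1$ on the other two. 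Hence $C_{1^22^34^{-2}}(n)=-\tfrac13 r_3(n)$.
\item If $n\equiv0\pmod4$, all coordinates are even; substituting $n_j=2m_j$ yields $C_{1^22^34^{-2}}(n)=r_3(n/4)$.
\end{itemize}

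Finally I would invoke the Gauss--Legendre three--squares theorem, namely $r_3(m)=0$ if and only if $m=4^k(8\ell+7)$ for some $k,\ell\in\N_0$. Combining this with the three evaluations shows that $C_{1^22^34^{-2}}(n)$ vanishes precisely when the relevant argument of $r_3$ does: this never occurs for $n\equiv1,2\pmod4$, occurs for $n\equiv3\pmod4$ exactly when $n\equiv7\pmod8$, and occurs for $n\equiv0\pmod4$ exactly when $n/4\in\{4^k(8\ell+7)\}$, i.e.\ when $n\in\{4^{k+1}(8\ell+7)\}$. These three vanishing loci assemble exactly into $\{4^k(8m+7):k,m\in\N_0\}$, giving the proposition.

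The main obstacle is really the first step: recognizing that the sign-twisted ternary theta series factors through the two unary identities, which is what converts a potentially delicate alternating sum into an explicit representation count. Once that factorization is in hand, the heart of the matter — ruling out cancellation in $\sum(-1)^{n_2+n_3}$ — dissolves into the elementary parity-and-symmetry bookkeeping above, and the only substantial external input is the classical three--squares theorem. (As a sanity check one verifies the leading coefficients $1,-2,-4,8,6,-8,-8,0,\dots$, which match $r_3,\,-\tfrac13 r_3,\,r_3(n/4)$ according to $n\bmod4$ and vanish first at $n=7$.)
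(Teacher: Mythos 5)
Your argument is correct, and it takes a genuinely different route from the paper's. The paper first establishes \Cref{lem:72}, writing $f_3=\frac{\eta^2(z)\eta^3(2z)}{\eta^2(4z)}$ as $\mathcal{H}_{1,2}|U_2$ acted on by a combination of sieving operators --- an identity proved by placing both sides in $M_{3/2}(\Gamma_0(16))$ and checking finitely many coefficients by computer via \Cref{lem:valence} --- and then invokes $\Theta^3=12\,\mathcal{H}_{1,2}|U_2$ from [BK, Lemma 4.1] to reduce the vanishing of $C_{1^22^34^{-2}}(n)$ to $r_{(1,1,1)}(n)=0$ and hence to Legendre's theorem. You instead factor the eta-quotient algebraically as $\Theta\cdot(\Theta^-)^2$ using the two product expansions for $\sum q^{n^2}$ and $\sum(-1)^nq^{n^2}$ (both already quoted in the paper from [O, Theorem 1.60]), read off the signed count $\sum_{n_1^2+n_2^2+n_3^2=n}(-1)^{n_2+n_3}$, and kill the signs by the mod-$4$ parity analysis; I checked your case evaluations ($r_3(n)$, $-\tfrac13 r_3(n)$, $-\tfrac13 r_3(n)$, $r_3(n/4)$ for $n\equiv 3,1,2,0\pmod 4$) and the assembly of the vanishing loci, and they are all right (and consistent with the paper's conclusion, since $r_3(4m)=r_3(m)$). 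Your method is in fact the one the paper itself uses for the companion case $1^62^{-3}$ in \Cref{prop:L1332}, where the sign $(-1)^{n_1+n_2+n_3}=(-1)^n$ is constant on each coefficient and no bookkeeping is needed; the extra content of your write-up is the symmetry argument showing the alternating sum does not introduce any new cancellation. What your route buys is self-containedness --- no Hurwitz class numbers, no valence formula, no computer verification, and an explicit closed form for every coefficient rather than just its vanishing --- while the paper's route has the advantage of reusing the operator and class-number machinery already set up for its other results.
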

We first obtain a formula for
\[
	f_3(z):=\frac{\eta^2(z)\eta^3(2z)}{\eta^2(4z)}.
\]

We have the following lemma.
\begin{lem}\label{lem:72}
	We have
	\[
		f_3=\mathcal{H}_{1,2}\big| U_2\big|\left(12S_{4,0}-4S_{4,1}-4S_{4,2}+12 S_{4,3}\right).
	\]
\end{lem}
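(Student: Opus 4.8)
The plan is to show that both sides of the claimed identity are holomorphic modular forms of weight $\frac32$ on $\Gamma_0(16)$ with trivial character, and then to reduce the identity to a finite coefficient check via the valence formula (\Cref{lem:valence}). Since the two sides will then differ by an element of $M_{\frac32}(\Gamma_0(16))$, it suffices to verify agreement of Fourier coefficients for $0\le n\le 16\cdot\frac{3/2}{12}\cdot\left(1+\frac12\right)=3$.

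For the left-hand side the weight $\frac32$ is half-integral, so \Cref{lem:eta} does not apply directly; instead I would peel off a copy of Jacobi's theta function. Using the eta-product expression $\Theta(z)=\frac{\eta^5(2z)}{\eta^2(z)\eta^2(4z)}$ from \cite[Theorem 1.60]{O}, one checks the identity
\[
f_3=\frac{\eta^2(z)\eta^3(2z)}{\eta^2(4z)}=\Theta(z)\cdot\frac{\eta^4(z)}{\eta^2(2z)}.
\]
The remaining factor is an eta-quotient of integral weight $1$; taking $N=8$ one verifies the two congruence conditions of \Cref{lem:eta} (as well as the nonnegativity of the cusp orders) to obtain $\frac{\eta^4(z)}{\eta^2(2z)}\in M_1(\Gamma_0(8),\chi_{-4})$. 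Since $\Theta\in M_{\frac12}(\Gamma_0(4),\chi_1)\subseteq M_{\frac12}(\Gamma_0(8),\chi_1)$ by \Cref{lem:unarytheta} (1), \Cref{lem:halfintmult} then gives $f_3\in M_{\frac32}(\Gamma_0(8),\chi_{-4}^2)=M_{\frac32}(\Gamma_0(8),\chi_1)\subseteq M_{\frac32}(\Gamma_0(16))$.

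For the right-hand side, \Cref{lem:Hell1ell2} gives $\mathcal H_{1,2}\in M_{\frac32}(\Gamma_0(8),\chi_8)$. Applying \Cref{lem:operatorshalf} (1) with $\delta=2$ yields $\mathcal H_{1,2}\big|U_2\in M_{\frac32}(\Gamma_0(8),\chi_8^2)=M_{\frac32}(\Gamma_0(8),\chi_1)$, and then \Cref{lem:operatorshalf} (2) with $M=4$ (admissible since $4\mid24$ and $4\not\equiv2\pmod4$) places each sieved piece $\mathcal H_{1,2}\big|U_2\big|S_{4,j}$ in $M_{\frac32}(\Gamma_0(16),\chi_1)$. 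Hence the entire right-hand side lies in $M_{\frac32}(\Gamma_0(16))$, the same space as $f_3$, so the valence formula is applicable.

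It remains to compare Fourier coefficients for $0\le n\le 3$. Writing $\mathcal H_{1,2}=\sum_n b(n)q^n$ with $b(n)=H(2n)-2\,\mathbbm{1}_{2\mid n}H(n/2)$, the coefficient of $q^n$ on the right-hand side is $w(n\bmod 4)\,b(2n)$, where $w(0)=w(3)=12$ and $w(1)=w(2)=-4$. Using $H(0)=-\frac1{12}$, $H(4)=\frac12$, $H(8)=1$, $H(12)=\frac43$, $H(3)=\frac13$, and $H(D)=0$ for $D\not\equiv0,3\pmod4$, one finds $b(0)=\frac1{12}$, $b(2)=\frac12$, $b(4)=1$, $b(6)=\frac23$, so the right-hand side begins $1-2q-4q^2+8q^3+\cdots$, in agreement with the expansion of $f_3$. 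The real work is thus the careful bookkeeping of levels and characters in the two preceding paragraphs, ensuring that both sides genuinely occupy the same space so that \Cref{lem:valence} applies, together with the correct evaluation of the small Hurwitz class numbers with their fractional weightings and the convention $H(0)=-\frac1{12}$ for the constant term; once these are in place the identity follows at once.
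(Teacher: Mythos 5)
Your proposal is correct and follows essentially the same route as the paper: factor $f_3=\Theta(z)\cdot\frac{\eta^4(z)}{\eta^2(2z)}$ to place it in $M_{\frac32}(\Gamma_0(8))\subseteq M_{\frac32}(\Gamma_0(16))$, use Lemmas \ref{lem:Hell1ell2} and \ref{lem:operatorshalf} to put the right-hand side in the same space, and finish with \Cref{lem:valence} by checking coefficients up to $n=3$. The only difference is that you carry out the coefficient comparison by hand via the Hurwitz class numbers where the paper delegates it to a computer, and your values do match.
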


\begin{proof}
By \cite[Proposition 1.41]{O}, \cite[Theorem 1.60]{O}, Lemmas \ref{lem:halfintmult}, and \ref{lem:eta}, we have
\[
f_3(z)=\frac{\eta^4(z)}{\eta^2(2z)} \Theta(z)\in M_{\frac32}(\Gamma_0(8)).
\]
 For the right-hand side, observe that by \Cref{lem:Hell1ell2}, $\mathcal H_{1,2}\in M_\frac32(\Gamma_0(8),\chi_8)$. Applying $U_2$ gives, by \Cref{lem:operatorshalf} (1), an element of $M_\frac32(\Gamma_0(8))$. Finally, by \Cref{lem:operatorshalf} (2), $S_{4,a}$ ($a\in\{1,2,3\}$) gives an element of $M_\frac32(\Gamma_0(16))$. Thus both sides are modular forms of weight $\frac32$ on $\Gamma_{0}(16)$. So we have to check $3$ Fourier coefficients, which were checked with a computer.
\end{proof}

We are now ready to prove \Cref{prop:L1331}.
\begin{proof}[Proof of \Cref{prop:L1331}]
By Lemma \ref{lem:72}, we have $C_{1^22^34^{-2}}(n)=0$ if and only if the $n$-th Fourier coefficient of $\mathcal{H}_{1,2}\big|U_2$ vanishes. However, by \cite[Lemma 4.1]{BK}, we have $\Theta^3 = 12\mathcal{H}_{1,2}\big|U_2$, so $C_{1^22^34^{-2}}(n)=0$ if and only if $r_{(1,1,1)}(n)=0$.  By Legendre's three-square theorem \cite{Legendre}
\begin{equation}\label{eqn:LegendreThreeSquare}
	r_{(1,1,1)}(n)=0 \Leftrightarrow n=4^k(8m+7)\text{ for some }k,m\in\N_0.
\end{equation}
This is the claim.
\end{proof}
\subsection{The case $1^62^{-3}$}
In this subsection, we prove the claimed identity for the other set appearing in \Cref{thm:L133}.
\begin{prop}\label{prop:L1332}
We have
\[
S_{1^62^{-3}}=\left\{n\in\N: n=4^k(8m+7)\text{ for some }k,m\in\N_0\right\}.
\]
\end{prop}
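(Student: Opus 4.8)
The plan is to exploit the fact that the relevant $\eta$-quotient is a perfect cube of a unary theta function, which makes the whole argument essentially bookkeeping with signs. Since $\sum_{j} j\delta_j = 1\cdot 6 + 2\cdot(-3)=0$, the quantity $C_{1^62^{-3}}(n)$ is exactly the $n$-th Fourier coefficient of
\[
\frac{\eta^6(z)}{\eta^3(2z)}=\left(\frac{\eta^2(z)}{\eta(2z)}\right)^3.
\]
The first step is to identify the inner factor. By the classical Jacobi triple product identity one has
\[
\frac{\eta^2(z)}{\eta(2z)}=\frac{(q;q)_\infty^2}{(q^2;q^2)_\infty}=\prod_{n\geq1}\left(1-q^{2n}\right)\left(1-q^{2n-1}\right)^2=\sum_{n\in\Z}(-1)^nq^{n^2},
\]
i.e.\ the inner factor is the alternating theta function.

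Cubing then yields
\[
\frac{\eta^6(z)}{\eta^3(2z)}=\sum_{\bm n\in\Z^3}(-1)^{n_1+n_2+n_3}q^{n_1^2+n_2^2+n_3^2},
\]
so that $C_{1^62^{-3}}(n)=\sum_{\bm n\in\Z^3,\ n_1^2+n_2^2+n_3^2=n}(-1)^{n_1+n_2+n_3}$. The crucial observation is that this sign is constant over all representations of $n$: since $n_j^2\equiv n_j\pmod 2$, every triple $\bm n$ with $n_1^2+n_2^2+n_3^2=n$ satisfies $n_1+n_2+n_3\equiv n_1^2+n_2^2+n_3^2=n\pmod 2$, whence $(-1)^{n_1+n_2+n_3}=(-1)^n$. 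Therefore no cancellation can occur and
\[
C_{1^62^{-3}}(n)=(-1)^n\, r_{(1,1,1)}(n).
\]

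Consequently $C_{1^62^{-3}}(n)=0$ if and only if $r_{(1,1,1)}(n)=0$, and Legendre's three-square theorem \eqref{eqn:LegendreThreeSquare} identifies the latter vanishing set as $\{n\in\N: n=4^k(8m+7),\ k,m\in\N_0\}$, which is precisely the claim (and in particular shows $\mathcal S_{1^62^{-3}}=\mathcal S_{1^22^34^{-2}}$). In contrast to the integral-weight CM cases of Sections \ref{sec:L52}--\ref{sec:L65}, there is essentially no analytic obstacle here: because the defining $\eta$-quotient is literally the cube of a single unary theta function, the signed representation count collapses to $\pm r_{(1,1,1)}(n)$, so no Deligne-type bound on a cuspidal part and no finite computer verification are needed. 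The only point that genuinely requires care is the uniform-sign argument, namely the parity identity $n_1+n_2+n_3\equiv n\pmod 2$, which is exactly what guarantees that vanishing of the signed coefficient is equivalent to genuine non-representability of $n$ as a sum of three squares.
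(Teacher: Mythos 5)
Your proposal is correct and follows essentially the same route as the paper: both write $\eta^6(z)/\eta^3(2z)$ as the cube of $\sum_{n\in\Z}(-1)^nq^{n^2}$, identify the resulting coefficient as $(-1)^n r_{(1,1,1)}(n)$ (your explicit parity argument $n_1+n_2+n_3\equiv n\pmod 2$ is the step the paper leaves implicit), and invoke Legendre's three-square theorem. No issues.
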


\begin{proof}
Using $\frac{\eta(z)^2}{\eta(2z)}=\sum_{n\in\Z} (-1)^nq^{n^2}$ (see \cite[Theorem 1.60]{O}), one directly obtains
\[
\frac{\eta^6(z)}{\eta^3(2z)}=\left(\sum_{n\in\Z} (-1)^n q^{n^2}\right)^3=\sum_{n_1,n_2,n_3\in\Z}(-1)^{n_1+n_2+n_3}q^{n_1^2+n_2^2+n_3^2}= \sum_{n\ge0} (-1)^n r_{(1,1,1)}(n)q^{n}.
\]
By \eqref{eqn:LegendreThreeSquare}, we have
\[
	r_{(1,1,1)}(n)=0 \Leftrightarrow n=4^k(8m+7)\text{ for some }k,m\in\N_0.\qedhere
\]
\end{proof}
	
\end{document}